\title[]{
Zero mean curvature entire graphs of
mixed type in Lorentz-Minkowski 3-space
}
\date{\today}
\newcommand{\red}[1]{\textcolor{Red}{#1}}
\renewcommand{\red}[1]{\textcolor{Black}{#1}}
\theoremstyle{plain}
 \newtheorem{theorem}{Theorem}[section]
 \newtheorem*{theorem*}{Theorem}
 \newtheorem*{lemma*}{Lemma}
 \newtheorem{proposition}[theorem]{Proposition}
 \newtheorem{fact}[theorem]{Fact}
 \newtheorem*{fact*}{Fact}
 \newtheorem{lemma}[theorem]{Lemma}
 \newtheorem{corollary}[theorem]{Corollary}
 \theoremstyle{remark}
 \newtheorem{definition}[theorem]{Definition}
 \newtheorem{remark}[theorem]{Remark}
 \newtheorem*{remark*}{Remark}
 \newtheorem*{problem*}{Problem}
 \newtheorem{example}[theorem]{Example}
\numberwithin{equation}{section}
\numberwithin{figure}{section}
\newcommand{\vect}[1]{\boldsymbol{#1}}
\newcommand{\Z}{\boldsymbol{Z}}
\newcommand{\R}{\boldsymbol{R}}
\newcommand{\C}{\boldsymbol{C}}
\newcommand{\imag}{\mathrm{i}}
\newcommand{\op}[1]{\operatorname{#1}}
\newcommand{\pmt}[1]{{\begin{pmatrix} #1  \end{pmatrix}}}
\renewcommand{\Re}{\operatorname{Re}}
\renewcommand{\phi}{\varphi}
\renewcommand{\epsilon}{\varepsilon}
\newcommand{\vmt}[1]{{\begin{vmatrix} #1  \end{vmatrix}}}
\author{S.~Fujimori}
\address[Shoichi Fujimori]{%
   Department of Mathematics, Okayama University,
   Tsushima-naka, Okayama 700-8530, Japan}
\email{fujimori@math.okayama-u.ac.jp}
\author{Y. Kawakami}
\address[Yu Kawakami]{%
Graduate School of Natural Science and Technology,
Kanazawa University,
Kanazawa, 920-1192, Japan,
}
\email{y-kwkami@se.kanazawa-u.ac.jp}
\author{M. Kokubu}
\address[Masatoshi Kokubu]{%
Department of Mathematics, School of Engineering, 
Tokyo Denki University, 
Tokyo 120-8551, Japan}
\email{kokubu@cck.dendai.ac.jp}
\author{W.~Rossman}
\address[Wayne Rossman]{%
   Department of Mathematics, Faculty of Science,
   Kobe University,
   Rokko, Kobe 657-8501, Japan
}
\email{wayne@math.kobe-u.ac.jp}
\author{M.~Umehara}
\address[Umehara]{%
   Department of Mathematical and Computing Sciences,
   Tokyo Institute of Technology
   2-12-1-W8-34, O-okayama, Meguro-ku,
   Tokyo 152-8552, Japan.
}
\email{umehara@is.titech.ac.jp}
\author{K.~Yamada}
\address{%
   Department of Mathematics\\
   Tokyo Institute of Technology\\
   O-okayama, Meguro, Tokyo 152-8551\\
   Japan
}
\email{kotaro@math.titech.ac.jp}
\dedicatory{Dedicated to Professor 
Osamu Kobayashi for his sixtieth birthday.}
\subjclass[2000]{Primary 53A10; Secondary 53A35, 53C50.}
\thanks{
Fujimori was partially supported by the Grant-in-Aid for Young
Scientists (B) No. 25800047, Kawakami was supported  by the Grant-in-Aid
for Scientific Research (C) No.\ 15K04845, Rossman by Grant-in-Aid for
Scientific Research (C) No.\ 15K04845 , Umehara by (A) No.\ 26247005
and Yamada
by (C) No. 26400066 from Japan Society for the Promotion of Science.
}
\begin{document}
\begin{abstract}
It is classically known that the only zero mean 
curvature entire graphs in the Euclidean 3-space 
are planes, by Bernstein's theorem.
A surface in Lorentz-Minkowski 3-space $\R^3_1$
is called of {\it mixed type} if it changes causal type
from space-like to time-like.
In $\R^3_1$,
Osamu Kobayashi found two zero mean 
curvature entire graphs of mixed  
type that are not planes. 
As far as the authors know, 
these two examples were the only known examples
of entire zero mean curvature graphs of mixed 
type  without singularities. 
In this paper, we construct several families of 
real analytic zero mean curvature entire graphs of 
mixed type in Lorentz-Minkowski $3$-space.
The entire graphs mentioned above lie in one of 
these classes.
\end{abstract}
\maketitle

\section*{Introduction}
The Jorge-Meeks $n$-noid ($n\ge 2$)
is a complete minimal surface of genus zero 
with $n$ catenoidal  ends  in the Euclidean 
3-space $\R^3$, which has
 $(2\pi/n)$-rotation symmetry with respect to 
an axis. 
In the authors' previous  work \cite{FKKRUY}, 
it was shown that the corresponding space-like 
maximal 
surface $\mathcal J_n$
in Lorentz-Minkowski 3-space $(\R^3_1;t,x,y)$ 
has an analytic extension $\tilde {\mathcal J}_n$
as a properly embedded zero mean curvature 
surface which changes its causal type 
from space-like to time-like. 
Noting that a smooth function 
$\lambda:\R^2\to \R$ can be realized as a subset 
$$
\bigl\{(t,x,y)\in \R^3_1\,;\, t=\lambda(x,y),\,\, x,y \in \R\bigr\},
$$
which we call the graph of the function $\lambda(x,y)$,
it should be remarked that $\tilde {\mathcal J}_n$ 
for $n \ge 3$ cannot be expressed as a graph,
but $\tilde {\mathcal J}_2$ 
gives an  entire zero mean curvature graph
(cf. \cite[Section 1]{FKKRUY}) associated to
\begin{equation}\label{eq:J0}
\lambda(x,y):=x\tanh 2y.
\end{equation}

Remarkably, until now, only two
entire graphs  of mixed type 
with zero mean curvature
were known.
One  is  \eqref{eq:J0} as above, and
the other is the Scherk type entire 
zero mean curvature graph
 associated to
\begin{equation}\label{eq:K0}
\lambda(x,y):=\log(\cosh x/\cosh y).
\end{equation}
Both surfaces were found  by Osamu Kobayashi \cite{K}. 
Recently,  Sergienko and Tkachev
\cite{ST} produced several interesting 
entire zero mean curvature graphs which admit
cone-like singularities (the surface
given by \eqref{eq:K0} is contained amongst
their examples). 

Here, the condition \lq mixed type\rq\
for zero mean curvature entire graphs is important.
In fact, Calabi \cite{C}
proved that there are no space-like zero 
mean curvature entire graphs except for planes.
On the other hand,  there are many time-like zero mean 
curvature entire graphs. For example,  the graph of 
$t=y+\mu(x)$ gives such an example if $\mu'(x)>0$ for 
all $x\in \R$. It should also be  remarked that
there are no non-zero constant mean curvature 
surfaces of mixed type (see \cite{HKKUY}).

The purpose of this paper is to construct 
further examples of entire zero mean 
curvature graphs of mixed type.
In fact,  we give a real $(4n-7)$-parameter 
(resp. $3$-parameter)
family $\mathcal K_{n}$
of space-like maximal surfaces of genus zero which 
admit only fold-type singularities
for each integer $n\ge 3$ (resp. for $n=2$), up to
motions and a homothetic transformations in $\R^3_1$.
The Jorge-Meeks type maximal surface $\mathcal J_n$
mentioned above belongs to $\mathcal K_n$.
Also,  the previously known two entire graphs 
of mixed type given in Kobayashi \cite{K}
are contained in $\mathcal K_{2}$,
so we call the surfaces in the class
$\mathcal K_{n}$ the \lq {\it Kobayashi surfaces}\rq\ 
of order $n$. 
Applying the technique used in \cite{FKKRUY},
we show that some (open)
subset 
of $\mathcal K_n$ 
consists of space-like maximal surfaces
which have analytic extensions across their
fold singularities, as proper immersions.
Moreover,  in this subclass of $\mathcal K_n$,
we can find a $(4n-7)$-parameter family of 
zero mean curvature entire graphs of
mixed type, 
up to congruence and homothety
in $\R^3_1$.
As a consequence, the maximal surfaces
$\mathcal S_n$ $(n\ge 2)$
which correspond to 
Scherk-type saddle towers in the Euclidean 
$3$-space
constructed by Karcher \cite{Kar}
(see also \cite{PT}) can be analytically extended 
as entire graphs in $\R^3_1$.
Kobayashi's entire graph
\eqref{eq:K0} coincides with $\mathcal S_2$.

During the production of this
paper, the authors received a 
preprint of Akamine \cite{A}, where
the entire graph as in Example \ref{angle:third}
is obtained by using a different
approach.
This surface belongs to
$\mathcal K_2$, as well as
the entire graphs (0.1) and (0.2).
As a consequence of Akamine's
construction, we can observe that
the surface is foliated by parabolas.

\section{Preliminaries}\label{sec:prelim}
We denote by $\R^3_1$
the Lorentz-Minkowski 3-space of
signature $(-++)$.
We introduce the notion of maxface
given in \cite{UY}:

\begin{definition}
Let $M^2$ be a Riemann surface. 
A $C^\infty$-map $f:M^2\to \R^3_1$
is called a {\it generalized maximal surface} 
(cf. \cite{ER}) if there exists an open dense
subset $W$ of $M^2$ such that the restriction
$f|_W$ of $f$ to $W$ gives a conformal (space-like) 
immersion of zero mean curvature. 
A {\it singular point} of $f$ is a
point (on $M^2$) at 
which $f$ is not an immersion. 
A singular point $p$ satisfying $df(p) = 0$
is called a {\it branch point} of $f$. 
A generalized maximal surface $f$ is called a {\it maxface} 
if $f$ does not have any branch points. 
\end{definition}

A maxface may have singular points in general.
Generic singularities of maxface
are classified in  \cite{FSUY}.
As noted in \cite[Remark 2.8]{FKKRSUYY2},
a maxface induces a holomorphic immersion
$
F=(F_0,F_1,F_2):\tilde M^2 \to \C^3
$
satisfying the nullity condition, that is, it satisfies
$$
-(dF_0)^2+(dF_1)^2+(dF_2)^2=0,
$$
where $\tilde M^2$ is the universal covering
of $M^2$. This immersion $F$ is called
the {\it holomorphic lift} of $f$.
We set
\begin{equation}\label{eq:W0}
g:=-\frac{dF_0}{dF_1-\imag dF_2},\qquad 
\omega:=\frac12(dF_1-\imag dF_2),
\end{equation}
where $\imag=\sqrt{-1}$. Then $(g,\omega)$ is a pair of
a meromorphic function and a meromorphic 
1-form on $M^2$, and is called 
the {\it Weierstrass data} of $f$.
The meromorphic function $g$ 
can be identified with 
the Gauss map of $f$.
Using $(g,\omega)$, 
$f$ has the following expression
(cf. \cite{K} and \cite{UY})
$$
f=\mbox{Re}\int_{z_0}^z(-2g, 1+g^2,\imag (1-g^2))\omega.
$$
The first fundamental form of $f$ is given by
\begin{equation}\label{eq:dsL}
ds^2=(1-|g|^2)^2 |\omega|^2.
\end{equation}
In particular, the singular set of $f$ consists of 
the points where $|g|=1$. 
In this situation, we set
\begin{align}\label{eq:F}
f_E&:=\op{Re}(F_1,F_2,\imag F_0)
=\mbox{Re}\int_{z_0}^z
(1+g^2,\imag (1-g^2),-2\imag g)\omega \\
&=\mbox{Re}\int_{z_0}^z(1-(-\imag g)^2,\imag 
(1+(-\imag g)^2),2(-\imag g))\omega.
\nonumber
\end{align}
This map $f_E$ gives a conformal minimal immersion
called the {\it companion} of $f$ (cf. \cite{UY}).
The Weierstrass data of $f_E$ is 
$(-\imag g,\omega)$.
Even if $f$ is single-valued on $M^2$,
its companion $f_E$ is defined only on 
$\tilde M^2$ in general.
The first fundamental form of $f_E$
is given by
\begin{equation}\label{eq:dsE}
ds^2_E:=(1+|g|^2)^2 |\omega|^2.
\end{equation}
A maxface $f$ is called {\it weakly complete}
if $ds^2_E$ 
in \eqref{eq:dsE}
is a complete Riemannian metric
on $M^2$ (cf. \cite[Definition 4.4]{UY}).
The definition of 
completeness of maxface is also
given in \cite[Definition 4.4]{UY}.
A complete maxface is weakly complete.
(The relationships between the two 
types of completeness
are written in \cite[Remark 6]{UY2}.)

For example, the maxface given by
\begin{equation}\label{eq:elliptic}
f(u,v)=(u,\cos v \sinh u,\sin v \sinh u)
\qquad
(u\in \R,\,\, v\in [0,2\pi)),
\end{equation}
is called the {\it elliptic catenoid}.
The surface is rotationally symmetric 
with respect to the time axis,
and has a cone-like singularity at the
origin.
If we set 
$z=r e^{\imag v}$ and $u:=\log r$,
then the map $f$ is the real part of the
holomorphic immersion
$$
F(z)=\left(\log z,\frac{z-z^{-1}}2,
-\imag \frac{z+z^{-1}}2\right).
$$
The companion of $f$ is the helicoid in the
Euclidean 3-space. The Weierstrass data
of $f$ is given by $(g,\omega)=(-z,dz/(2z^2))$.
Several maxfaces defined on non-zero genus
Riemann surfaces are known
(cf. \cite{FRUYY1}).
In this paper, we are interested in maximal surfaces
having only fold singularities defined as follows:

\begin{definition}\label{def:fold}
Let $M^2$ be a Riemann surface,
and $(g,\omega)$ the Weierstrass data
of a weakly complete
maxface $f:M^2\to \R^3_1$.
A singular point $p\in M^2$ is called {\it non-degenerate}
if $dg(p) \ne 0$ holds.
By the implicit function theorem,
there exists a regular curve $\sigma(t)$
($|t|<\epsilon$) on $M^2$ for some $\epsilon>0$
parameterizing the singular set of $f$
such that $\sigma(0)=p$.
A non-degenerate singular point $p$ 
is called a {\it fold} singularity  if it satisfies
\begin{equation}\label{cf}
\mbox{Re}\left(
\frac{dg(\sigma(t))}{g^2(\sigma(t))\omega(\sigma(t))}
\right)=0
\qquad (|t|<\epsilon).
\end{equation}
\end{definition}

The following fact 
follows from  
\cite[Theorem 2.15 and Lemma 2.17]{FKKRSUYY2}:

\begin{fact}
For each fold singular point $p$ of
a maxface $f$, there exists a local coordinate 
system $(U;u,v)$ centered at $p$ such that 
$f(u,v)=f(u,-v)$ and the $u$-axis consists of
the fold singularities
of $f$.  Moreover, the image of $f$ has an analytic extension
across the image of the singular
curve $u\mapsto f(u,0)$.
\end{fact}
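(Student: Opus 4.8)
The plan is to work locally near $p$ with the holomorphic lift $F=(F_0,F_1,F_2)$ of $f$ and to produce, at a fold, a Schwarz-type reflection symmetry of $F$ across the singular curve; the companion $f_E$ (which is automatically an immersion near $p$, since $(1+|g|^2)^2|\omega|^2$ never degenerates and $f$ has no branch points) explains a posteriori why the folded image extends to a regular surface, but the actual argument is carried out on $F$. First I would normalize coordinates. Since $p$ is non-degenerate, $dg(p)\ne 0$, and since $|g|\equiv 1$ on the singular set, the real-analytic function $|g|^2=g\bar g$ has non-vanishing differential at $p$; hence the singular set is a regular real-analytic arc through $p$. Complexifying a real-analytic parametrization of this arc yields a holomorphic coordinate $z=u+\imag v$ on a neighborhood $U$ of $p$, symmetric about $\{v=0\}$, in which the singular set is exactly the $u$-axis. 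Because \eqref{cf} is imposed along the whole singular curve, after shrinking $U$ every point of the $u$-axis is still a fold.

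Next I would rephrase the fold condition. Write $\omega=\hat\omega(z)\,dz$ on $U$. Along the $u$-axis, $|g|=1$ gives $g(u)=e^{\imag\theta(u)}$ with $\theta$ real-analytic and real-valued, and $dg(p)\ne 0$ forces $\theta'(u)\ne 0$ near $u=0$; thus $g'(u)=\imag\theta'(u)g(u)$ and $dg/(g^2\omega)=\imag\theta'(u)/(g(u)\hat\omega(u))$ on the $u$-axis. Therefore \eqref{cf} is equivalent, near $p$, to the statement that $g\hat\omega$ is real along the $u$-axis. Using $1+g^2=2g\cos\theta$ and $\imag(1-g^2)=2g\sin\theta$ on the $u$-axis, one finds $F'(u)=2g(u)\hat\omega(u)\,(-1,\cos\theta(u),\sin\theta(u))$ there, so $F'$ is real along the $u$-axis precisely when $g\hat\omega$ is, that is, precisely at a fold. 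When this holds, $\Im F$ is constant on the $u$-axis, so the holomorphic map $H(z):=F(z)-\overline{F(\bar z)}$ is constant on $U$ (a purely imaginary constant vector, being so on $\{v=0\}$), whence $\Re F(z)=\Re F(\bar z)$ on $U$. This is exactly $f(u,v)=f(u,-v)$, and the $u$-axis consists of folds as noted above.

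For the analytic extension, observe that $f=\Re F$ is real-analytic and, by the symmetry just obtained, even in $v$: $f(u,v)=\sum_{k\ge 0}a_k(u)v^{2k}$ with $a_k$ real-analytic, so $h(u,s):=\sum_{k\ge 0}a_k(u)s^{k}$ is a real-analytic $\R^3$-valued map near $(0,0)$ with $f(u,v)=h(u,v^2)$. It remains to check $h$ is an immersion at $(0,0)$: there $f_v=0$, $\partial_u h=f_u(p)$ and $\partial_s h=\tfrac12 f_{vv}(p)$, and a short computation from the Weierstrass data (using $g'(0)=\imag\theta'(0)g(0)$ and $g(0)\hat\omega(0)\in\R\setminus\{0\}$) shows that $f_u(p)$ is a non-zero multiple of $(-1,\cos\theta(0),\sin\theta(0))$ while $f_{vv}(p)$ has a non-zero component along $(0,\sin\theta(0),-\cos\theta(0))$; these directions are independent, so $h$ is an immersion. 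Hence the image of $h$ is a regular real-analytic surface agreeing with the image of $f$ on $\{s\ge 0\}$, which therefore extends the image of $f$ analytically across $u\mapsto f(u,0)=h(u,0)$.

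The main obstacle is the equivalence, in the second paragraph, between the intrinsic condition \eqref{cf} and the realness of $F'$ (equivalently of $g\hat\omega$) along the singular curve: this genuinely requires $dg(p)\ne 0$, which both makes the singular set a regular arc and forces $\theta'\ne 0$, ruling out the degenerate singularities (such as the cone-like singularity of the elliptic catenoid \eqref{eq:elliptic}) for which the conclusion is false. The transversality needed for the analytic extension is routine but also rests on $\theta'(p)\ne 0$.
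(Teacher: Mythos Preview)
The paper does not supply its own proof of this fact; it simply invokes \cite[Theorem~2.15 and Lemma~2.17]{FKKRSUYY2}. Your argument is a correct, self-contained proof, and the central mechanism you isolate---that along the singular curve the fold condition \eqref{cf} is equivalent to $g\hat\omega$ being real, whence $F'=2g\hat\omega\,(-1,\cos\theta,\sin\theta)$ is real there and Schwarz reflection yields $f(u,v)=f(u,-v)$---is precisely the computation the paper itself performs later in the proof of Proposition~\ref{prop:fold} (there the coordinate is normalized so that $g=e^{\imag\xi}$, and one finds $h:=e^{\imag\xi}\hat\omega=g\hat\omega$ satisfies $\overline{h(\bar\xi)}=h(\xi)$). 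So your approach is entirely consonant with the paper's methods and, presumably, with those of the cited reference.

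Your treatment of the analytic extension, writing the even real-analytic map as $f(u,v)=h(u,v^2)$ and checking that $h$ is an immersion, is also correct. The point that $\theta'(p)\ne 0$ (forced by $dg(p)\ne 0$) is exactly what produces the nonzero $(0,\sin\theta(0),-\cos\theta(0))$-component of $f_{vv}(p)=-\Re F''(p)$, giving the required transversality; as you note, this is what fails for the cone-like singularity of \eqref{eq:elliptic}. One small stylistic remark: rather than first straightening the singular arc and then writing $g(u)=e^{\imag\theta(u)}$, you could (as the paper does in Proposition~\ref{prop:fold}) take $\xi$ with $g=e^{\imag\xi}$ from the outset, which simultaneously straightens the singular set and sets $\theta'\equiv 1$; this shortens the computation slightly but changes nothing of substance.
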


In this paper, we are interested in  maxfaces
having only fold singularities on 
the Riemann surface whose compactification
is bi-holomorphic to the Riemann sphere
$$
S^2:=\C\cup \{\infty\}.
$$
\begin{definition}\label{def:W}
A pair $(g,\omega)$ 
consisting of a meromorphic function and a 
meromorphic $1$-form
defined on $S^2$ is called a {\it Weierstrass data on 
$S^2$} if the metric
\begin{equation}\label{eq:dse}
ds^2_E:=(1+|g|^2)^2 |\omega|^2
\end{equation}
has no zeros on $S^2$.
Each point where $ds^2_E$ diverges
is called an {\it end} of 
$(g,\omega)$, that is,
at least one of the three $1$-forms
\begin{equation}\label{three}
\omega,\quad g\omega,\quad g^2\omega
\end{equation}
have poles at the ends.
\end{definition}

We fix a Weierstrass data $(g,\omega)$
on $S^2$. Let $\{p_1,...,p_N\}$ be 
the set of ends of $(g,\omega)$. Then the map
\begin{equation}\label{eq:FL}
f=\mbox{Re}(F),\quad
F:=
\int_{z_0}^z(-2g, 1+g^2,\imag (1-g^2))\omega
\end{equation}
is defined on the universal covering of 
$S^2\setminus\{p_1,...,p_N\}$.
We call $f$ the maxface
{\it associated to}  $(g,\omega)$.
If $f$ is single-valued on
$S^2\setminus\{p_1,...,p_N\}$,
that is, the residues 
of the three meromorphic
1-forms
$-2g\omega, (1+g^2)\omega,
\imag (1-g^2)\omega$
are all real numbers
at each $p_j$ ($j=1,...,N$),
then we say that
$(g,\omega)$ 
{\it satisfies the period condition}.

\begin{proposition}\label{prop:wq}
Let $(g,\omega)$ be a Weierstrass data 
on $S^2$, and $p_1,...,p_N$
its ends. If the maxface $f$
associated to  $(g,\omega)$
is single-valued on
$S^2\setminus \{p_1,...,p_N\}$,
then $f:S^2\setminus \{p_1,...,p_N\}\to \R^3_1$
is a weakly complete maxface.
\end{proposition}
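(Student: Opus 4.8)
The plan is to verify the two defining properties of a weakly complete maxface on $M:=S^2\setminus\{p_1,\dots,p_N\}$: that $f$ is a maxface, and that the metric $ds^2_E$ in \eqref{eq:dse} is a complete Riemannian metric on $M$. (I assume $g$ is non-constant; the constant case is degenerate, $f$ being then a piece of a plane or not even a generalized maximal surface, and is disposed of separately.) First I would show that $f$ is well-defined and real analytic on all of $M$. The integrand $(-2g,1+g^2,\imag(1-g^2))\omega$ of \eqref{eq:FL} is a priori only meromorphic, so the point is to rule out poles at any $q\in M$. Since $q$ is not an end, $ds^2_E$ is finite and nonzero at $q$. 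If $g$ is holomorphic at $q$, this forces $\omega$ to have neither a zero nor a pole at $q$, so the integrand is holomorphic there. If $g$ has a pole of order $k\ge1$ at $q$, then $(1+|g|^2)^2|\omega|^2\sim|g|^4|\omega|^2$ being finite and nonzero forces $\omega$ to vanish to order exactly $2k$ at $q$, and an order count shows that $g\omega$, $g^2\omega$ and $\omega$ are all holomorphic at $q$. Hence $F$ is an (a priori multivalued) holomorphic map on $M$ with no poles, and the single-valuedness hypothesis makes $f=\Re F\colon M\to\R^3_1$ a well-defined real analytic map.

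Next I would show that $f$ is a maxface. Put $W:=\{q\in M\,;\,|g(q)|\ne1\}$, with the convention $|g|=+\infty$ at poles of $g$; this set is open, and it is dense because its complement $\{|g|=1\}$ is the zero set of the non-constant real analytic function $|g|^2-1$ on the connected surface obtained by deleting the finitely many poles of $g$ from $M$. On $W$ the first fundamental form $ds^2=(1-|g|^2)^2|\omega|^2$ of \eqref{eq:dsL} is positive: off the poles of $g$, $\omega$ is nonvanishing on $M$ (a zero there would make $ds^2_E$ vanish), while at a pole of $g$ the order count above gives $(1-|g|^2)^2|\omega|^2\sim|g|^4|\omega|^2$ finite and nonzero. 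By the Weierstrass representation for maximal surfaces (cf.\ \cite{K}, \cite{UY}), $f|_W$ is then a conformal space-like immersion of zero mean curvature, so $f$ is a generalized maximal surface. It has no branch points: its singular set is exactly $M\setminus W=\{|g|=1\}$, where $g$ takes a finite unimodular value and $\omega$ is holomorphic and nonzero; since the vector $(-2g,1+g^2,\imag(1-g^2))$ never vanishes (its last two entries cannot vanish together), writing $dF=(-2g,1+g^2,\imag(1-g^2))\,h\,dz$ in a local coordinate $z$ with $h\ne0$ shows $df=\Re(dF)\ne0$. Hence $f$ is a maxface.

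For the completeness of $ds^2_E$ I would show that every divergent path in $M$ has infinite $ds_E$-length; such a path must accumulate at some end $p_j$. In a local coordinate $z$ centered at $p_j$, write $\omega=h\,dz$; by Definition~\ref{def:W}, at least one of $h$, $gh$, $g^2h$ has a pole at $0$. If $h$ has a pole of order $m\ge1$, then $ds^2_E\ge|h|^2|dz|^2\asymp|z|^{-2m}|dz|^2$ near $0$. Otherwise $h$ is holomorphic, so $g$ has a pole at $0$, and comparing orders (via $g^2h=(gh)\,g$) shows that $g^2h$ must have a pole, whence $ds^2_E\ge|g|^4|h|^2|dz|^2=|g^2h|^2|dz|^2\asymp|z|^{-2m'}|dz|^2$ with $m'\ge1$. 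In either case $ds_E\ge c\,|z|^{-1}|dz|$ near $p_j$ for some $c>0$, and since $|z|^{-1}|dz|\ge|d\log|z||$ pointwise, the length of the path is at least $c\,\bigl|\log|z_0|-\log|z|\bigr|\to\infty$ as $z\to0$. Thus $ds^2_E$ is complete and $f$ is weakly complete.

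I expect the only delicate point to be the local bookkeeping at the poles of $g$ lying in $M$: extracting, from the single hypothesis that $ds^2_E$ has no zeros, the exact vanishing order of $\omega$, which is precisely what makes $F$ extend holomorphically and keeps $ds^2$ positive. The density of $W$, the absence of branch points, and the completeness estimate near the ends are then routine once the Weierstrass data is understood locally.
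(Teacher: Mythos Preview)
Your proposal is correct and follows the same two-step strategy as the paper: verify that $f$ is a maxface from the positivity of $ds^2_E$, then establish completeness from the pole structure at each end. The paper compresses your first two steps into the single remark that the holomorphic lift $F$ is an immersion precisely when $ds^2_E$ is positive definite, and for completeness it lets $m_j$ be the maximum pole order among $\omega,\,g\omega,\,g^2\omega$ at $p_j$ and observes that $|z-p_j|^{2m_j}ds^2_E$ extends to a positive definite metric across $p_j$; your case analysis ($\omega$ has a pole versus $g$ has a pole) recovers the same lower bound $ds_E\gtrsim |z|^{-1}|dz|$ in a more hands-on way.
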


\begin{proof}
The map $F$ given in \eqref{eq:FL}
is an immersion if and only if
the metric $ds^2_E$ given by
\eqref{eq:dse} is positive definite.
Thus $f$ gives a maxface
defined on $S^2\setminus\{p_1,...,p_N\}$.
Since $ds^2_E$ diverges at $\{p_1,...,p_N\}$, 
at least one of the three 1-forms
as in \eqref{three} has a pole at $z=p_j$
($j=1,...,N$).
We let $m_j(\ge 1)$ be the maximum order 
of the poles of 
the above three forms at $p_j$.
Using this, one can easily
show that 
$
{|z-p_j|^{2m_j}}{ds^2_E}
$
is positive definite on a sufficiently small 
neighborhood of $z=p_j$.
This implies that $ds^2_E$ is 
a complete Riemannian
metric on $S^2\setminus \{p_1,...,p_N\}$.
\end{proof}

For example, the ends of the Weierstrass data
$
(g,\omega)=(z,{dz}/{(2z^2)})
$
of the elliptic catenoid
(cf. \eqref{eq:elliptic})
consist of $\{0,\infty\}$.
We are interested in a special class of
weakly complete maxface on $(M^2=)S^2$
as follows:

\begin{definition}\label{def:f-data}
The Weierstrass data $(g,\omega)$
on $S^2$ is called of {\it fold-type} if
\begin{enumerate}
\item[{(i)}] \label{i:W1}
all of its ends $p_1,...,p_N$ lie in
the unit circle 
$
S^1:=\{z\in S^2\,;\, |z|=1\},
$ 
\item[{(ii)}]  \label{i:W2}
$|g(z)|=1$ holds if and only if $z\in S^1$,
\item[{(iii)}]  \label{i:W3}
$
\mbox{Re}\left[{dg}/(g^2\omega)\right]
$
vanishes identically on $S^1$.
\end{enumerate}
\end{definition}

A  $C^\infty$-map
$\phi:M^2\to \R^3$ has a 
{\it fold singularity} at $p$ 
if there exists a local coordinate 
system $(u, v)$ centered at $p$ such that
$\phi(u, v) =\phi(u,-v)$. 
As shown in \cite[Lemma 2.17]{FKKRSUYY2},
the maxface induced by
a Weierstrass data 
of fold-type actually has fold singularities
along $S^1$.

\begin{example}[Scherk-type entire graph]\label{ex:Sch}
A typical example of
Weierstrass data of fold-type on $S^2$ is
$$
g=z,\qquad \omega=\frac{2dz}{z^4-1}.
$$
The ends of the Weierstrass data of $(g,\omega)$ 
consist of the roots $\{\pm1, \pm \imag\}$ of the equation $z^4=1$.
One can easily check that $(g,\omega)$ is 
of fold-type, and induces a maxface
$$
f(z)=\left(\log\left|\frac{1+z^2}{1-z^2}\right|,
\log\left|\frac{1-z}{1+z}\right|,
\log\left|\frac{1-\imag z}{1+\imag z}\right|\right).
$$
By setting $z=r e^{\imag \theta}$, $f=(t,x,y)$
satisfies
$$
\cosh x=
\frac{1+r^2}{|1-z^2|},\quad
\cosh y=
\frac{1+r^2}{|1+z^2|}.
$$
Thus we have
$$
\frac{\cosh x}{\cosh y}
=\left|\frac{1+z^2}{1-z^2}\right|=e^{t},
$$
that is, the image of $f$ satisfies 
the relation \eqref{eq:K0} in the introduction.
We remark that the conjugate $f_*$ of
$f$ induces the 
triply periodic 
maximal surface
given in \cite[Remark 2.4]{FRUYY2} with $\theta=0$,
which is obtained
as a limit of the family of
Schwarz P-type maximal
surfaces.
The fundamental piece of
$f_*$ is 
bounded by four light-like line segments.
\end{example}

We next give an example of Weierstrass data which is  not
of fold-type, but the \red{corresponding} maximal surface admit
only fold singularities:

\begin{example}[A maximal helicoid]\label{ex:helicoid}
If we set
$g=z$ and $\omega=\imag dz/z^2$,
then $(g,\omega)$ induces a maximal helicoid
by setting $z=r e^{\imag \theta}$ ($r>0$,\,\,
$\theta\in \R$),  and
$$
f:=\left(2 \theta,-\left(r+r^{-1}\right) 
\sin \theta,\left(r+r^{-1}\right) \cos \theta
\right)
$$
is defined on the universal covering of
$\C\setminus \{0\}$, and admits only fold singularities at 
the unit circle $r=1$.
However, $(g,\omega)$ is not of fold-type,
since $\{0,\infty\}$  are ends of the Weierstrass data
$(g,\omega)$.
It should be noted that if we set $u=(r+r^{-1})/2(>1)$,
the surface $f$ analytically extends even when $|u|<1$.
Moreover, the entire analytic extension of a 
maximal helicoid coincides exactly with the 
helicoid as a minimal surface in $\R^3$,
and is embedded.
\end{example}

\begin{proposition}
\label{prop:fold}
Let $(g,\omega)$ be a Weierstrass data of
fold-type defined on $S^2$, then the maxface
$f$ induced by $(g,\omega)$
satisfies the period condition
in $\R^3_1$, and gives a weakly
complete maxface having fold singularities 
in $S^1$.
\end{proposition}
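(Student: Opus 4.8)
The plan is to exploit the anti-holomorphic involution $\iota(z):=1/\bar z$ of $S^2=\C\cup\{\infty\}$, the reflection across $S^1$; conditions (i)--(iii) of Definition~\ref{def:f-data} are precisely what makes $\iota$ a symmetry of the holomorphic $\C^3$-valued $1$-form
$$
\eta:=\bigl(-2g,\ 1+g^2,\ \imag(1-g^2)\bigr)\,\omega ,
$$
whose primitive is the holomorphic lift $F$ of $f$. First I would establish two transformation laws for the Weierstrass data, each by Schwarz reflection. Because $|g|\equiv1$ on $S^1$ and $\iota$ is the identity there, the meromorphic functions $g$ and $1/\overline{g\circ\iota}$ agree along $S^1$, hence coincide on $S^2$ by the identity theorem; this gives $g\circ\iota=1/\bar g$. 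Next, since $ds^2_E$ is zero-free (Definition~\ref{def:W}) the form $\omega$ has no zeros, and $g$ is non-constant (a constant or identically-zero $g$ would make $\{|g|=1\}\ne S^1$, contrary to (ii)); hence $\psi:=dg/(g^2\omega)$ is a genuine meromorphic function, not identically $0$ or $\infty$. Condition (iii) says $\operatorname{Re}\psi\equiv0$ on $S^1$, so $\psi$ and $-\overline{\psi\circ\iota}$ agree on $S^1$, hence on $S^2$, i.e.\ $\psi\circ\iota=-\bar\psi$. A short computation with the anti-holomorphic pullback $\iota^*(dz)=-\overline{z^{-2}}\,d\bar z$ combines these into $\iota^*\omega=\overline{g^2\omega}$, and then a component-by-component check using $g\circ\iota=1/\bar g$ and $\iota^*\omega=\overline{g^2\omega}$ yields $\iota^*\eta=\bar\eta$, where $\bar\eta$ denotes the conjugate form.

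With this symmetry the period condition is immediate. By condition (i) every end $p_j$ lies on $S^1$ and is fixed by $\iota$, so $\iota$ restricts to an orientation-reversing diffeomorphism of $S^2\setminus\{p_1,\dots,p_N\}$; hence for a small positively oriented loop $\gamma$ about $p_j$, the loop $\iota\circ\gamma$ is homotopic to $\gamma^{-1}$ in $S^2\setminus\{p_1,\dots,p_N\}$. Moreover $\eta$ is holomorphic on $S^2\setminus\{p_1,\dots,p_N\}$ (since $\omega$ is zero-free, a pole of any component of $\eta$ would force $ds^2_E$ to diverge, so it can only occur at an end), so its integral over a loop depends only on the homotopy class and
$$
-\int_\gamma\eta=\int_{\iota\circ\gamma}\eta=\int_\gamma\iota^*\eta=\int_\gamma\bar\eta=\overline{\int_\gamma\eta},
$$
whence $\operatorname{Re}\int_\gamma\eta=0$. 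As the loops about the $p_j$ generate $H_1(S^2\setminus\{p_1,\dots,p_N\})$, the real part of $F=\int\eta$ is single-valued on $S^2\setminus\{p_1,\dots,p_N\}$; that is, $(g,\omega)$ satisfies the period condition. Proposition~\ref{prop:wq} then shows that $f$ is a weakly complete maxface. Finally, by \eqref{eq:dsL} the singular set of $f$ equals $\{|g|=1\}$, which is exactly $S^1$ by condition (ii) (note $dg\ne0$ along $S^1$, since otherwise $\{|g|=1\}$ would not be a smooth curve near a zero of $dg$), and condition (iii) is precisely the fold condition \eqref{cf} along the singular curve $t\mapsto e^{\imag t}$, so by \cite[Lemma~2.17]{FKKRSUYY2} every singular point of $f$ is a fold singularity.

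The step I expect to be the main obstacle is the first paragraph: promoting the two pointwise conditions on $S^1$ to global meromorphic identities on $S^2$, and then carefully bookkeeping the anti-holomorphic pullbacks so that all three components of $\eta$ transform by complex conjugation. After $\iota^*\eta=\bar\eta$ is in hand, the homological period computation and the identification of the singular set are routine, and weak completeness and the fold-singularity assertion are direct quotations of Proposition~\ref{prop:wq} and \cite[Lemma~2.17]{FKKRSUYY2}, respectively.
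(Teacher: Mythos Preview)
Your proof is correct and takes a genuinely different route from the paper's. The paper works locally: at each end $p_j$ it chooses a coordinate $\xi$ with $g=e^{\imag\xi}$, uses condition (iii) to write $\omega = e^{-\imag\xi}h(\xi)\,d\xi$ with $h$ real on the real axis, and then reads off directly that $\eta = 2(-1,\cos\xi,\sin\xi)h(\xi)\,d\xi$ has real residues. Your argument is global: you promote (ii) and (iii) to the meromorphic identities $g\circ\iota = 1/\bar g$ and $\psi\circ\iota=-\bar\psi$ on all of $S^2$ via Schwarz reflection, assemble these into the single symmetry $\iota^*\eta=\bar\eta$, and then the period condition falls out of a one-line homological computation. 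Your approach is more conceptual and in fact anticipates the paper's later use of the involution $I(z)=1/\bar z$ in Theorem~\ref{thm:main}, where your first reflection law reappears as equation~\eqref{eq:g1g}; the paper's local approach, in exchange, makes the structure of $\eta$ near an end completely explicit.

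One small slip: the assertion that $\omega$ has no zeros is not quite right---at a pole of $g$ of order $k$, positivity of $ds^2_E$ forces $\omega$ to vanish to order $2k$, and this actually occurs in the paper's examples (e.g.\ at $z=\infty$ for $g=z^{n-1}$). Fortunately the two places you use this claim survive anyway: $\psi=dg/(g^2\omega)$ is meromorphic simply because $g$ is non-constant and $\omega$ is not identically zero, and $\eta$ is holomorphic off the ends because $ds^2_E=|\omega|^2+2|g\omega|^2+|g^2\omega|^2$ dominates each of $|\omega|^2$, $|g\omega|^2$, $|g^2\omega|^2$, so a pole in any component of $\eta$ forces $ds^2_E$ to diverge.
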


\begin{proof}
Let $p_1,...,p_N\in S^1$ be the ends of $(g,\omega)$.
It is sufficient to show that the maxface $f$ induced by $(g,\omega)$
is single-valued on a punctured disk at each $p_j$
($j=1,...,N$).
The Gauss map $g$ does not 
have a branch point at $p_j$.
In fact, if $dg(p_j)=0$ for some
$j=1,...,N$, then the singular set $|g|=1$
must bifurcate at $p_j$, but
this contradicts that the singular set
of $ds^2$ 
(cf. \eqref{eq:dsL})
is exactly equal to $S^1$.
Thus, we can take a local complex coordinate
$(U,\xi)$ centered at $p_j$ such that
$
g=e^{\imag\xi}.
$
In this coordinate, the singular
set of $ds^2$ coincides
with the real axis in the $\xi$-plane.
We set
$
\omega=\hat \omega(\xi) d\xi.
$
Then the condition (3) in 
Definition \ref{def:f-data}
implies the function
$$
\frac{-\imag dg}{g^2\omega}
=\frac{e^{\imag\xi}}{e^{2\imag\xi}\hat \omega}=
\frac{1}{e^{\imag\xi}\hat \omega}
$$
takes real values on the real axis in the
$\xi$-plane.
Thus, we can write  
$
\hat \omega(\xi)=e^{-\imag\xi}h(\xi),
$
where $h(\xi)$ is a
meromorphic function satisfying 
$
\overline{h(\bar \xi)}=h(\xi).
$ Then
$$
(-2g, 1+g^2,\imag (1-g^2))\omega
=2(-1,\cos\xi,\sin \xi)h(\xi)d\xi
$$
has real residue, since $\cos\xi,\sin \xi$,
and $h(\xi)$ are
real-valued functions on the
real axis. 
Thus, $f$ is single-valued
on a neighborhood of $p_j$.
Since $p_j$ is arbitrarily chosen,
$f$ is single-valued
on
$S^2\setminus \{p_1,...,p_N\}$,
proving the assertion,
since the weak completeness of $f$
follows from Proposition \ref{prop:wq}.
\end{proof}

The following assertion holds:

\begin{corollary}
\label{lem:Q}
Let $p$ $($resp.~$q)$ be a fold singular point 
$($resp.~an end$)$ of the weakly complete maxface 
$f$ associated
to a Weierstrass data $(g,\omega)$ 
of fold-type on $S^2$.
Then the Hopf differential $Q:=\omega dg$ does 
not vanish at $p$ $($resp. has a pole at $q)$.
In particular, the umbilic points of $f$ 
correspond to
the zeros of $Q$.
\end{corollary}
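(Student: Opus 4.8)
The plan is to treat the three assertions of the corollary separately, reusing the local normal forms already established. \emph{(i) Non-vanishing of $Q$ at a fold singular point.} Let $p$ be a fold singular point of $f$. Then $p$ lies on $S^1$ but is not one of the ends $p_1,\dots,p_N$, since those are precisely the points of $S^1$ deleted from the domain of $f$; moreover, because $|g(p)|=1$, the Gauss map $g$ is finite and nonzero at $p$. By Definition~\ref{def:W} the metric $ds^2_E=(1+|g|^2)^2|\omega|^2$ has no zeros, so $\omega(p)\ne 0$, and since $ds^2_E$ does not diverge at $p$ (that being the defining property of an end) while $g$ is finite there, $\omega$ has no pole at $p$ either. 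Finally, a fold singular point is non-degenerate, so $dg(p)\ne 0$. Hence $Q(p)=\omega(p)\,dg(p)\ne 0$.

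\emph{(ii) $Q$ has a pole at an end.} Let $q=p_j$ be an end. As shown in the proof of Proposition~\ref{prop:fold}, $g$ has no branch point at $q$ and $|g(q)|=1$, so there is a local coordinate $\xi$ centered at $q$ with $g=e^{\imag\xi}$, and in this coordinate $\omega$ has the form $\omega=e^{-\imag\xi}h(\xi)\,d\xi$ for some meromorphic function $h$ that is real on the real axis. Since $dg=\imag e^{\imag\xi}\,d\xi$, we obtain
$$
Q=\omega\,dg=\imag e^{\imag\xi}\cdot e^{-\imag\xi}h(\xi)\,(d\xi)^2=\imag\,h(\xi)\,(d\xi)^2 .
$$
Now $q$ being an end means $ds^2_E$ diverges as $\xi\to 0$; since the factor $(1+|g|^2)^2\,|e^{-\imag\xi}|^2$ stays bounded and bounded away from $0$ near $\xi=0$, the function $|h|$ must be unbounded there, i.e.\ $h$ has a pole at $\xi=0$. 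Hence $Q$ has a pole at $q$.

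\emph{(iii) Umbilics.} This follows from the standard fact that a maxface is umbilic at a regular point exactly when its Hopf differential $Q=\omega\,dg$ vanishes there; combining this with (i) and (ii), which exclude zeros of $Q$ at all singular points and ends, the umbilic set of $f$ coincides with the zero set of $Q$.

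The steps (i) and (iii) are essentially bookkeeping; the only point that needs genuine attention is (ii), where one has to import from the proof of Proposition~\ref{prop:fold} that $g$ is unbranched at an end and that $\omega$ admits the normal form $e^{-\imag\xi}h(\xi)\,d\xi$, and then observe that under this normalization the statement ``$q$ is an end'' is equivalent to ``$h$ has a pole at $q$'', which is exactly the pole of $Q=\imag h(\xi)(d\xi)^2$.
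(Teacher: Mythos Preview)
Your proof is correct. Part (i) and the paper's argument are identical. For part (ii) you take a slightly different route: you import the local normal form $g=e^{\imag\xi}$, $\omega=e^{-\imag\xi}h(\xi)\,d\xi$ from the proof of Proposition~\ref{prop:fold} and compute $Q=\imag h(\xi)(d\xi)^2$ explicitly, then read off that $h$ (hence $Q$) has a pole from the divergence of $ds^2_E$. The paper instead re-establishes $dg(q)\ne 0$ by the same bifurcation argument used earlier and then invokes the algebraic identity
\[
   ds^2_E=(1+|g|^2)^2\frac{|Q|^2}{|dg|^2},
\]
concluding that completeness of $ds^2_E$ together with $|g(q)|=1$ and $dg(q)\ne 0$ forces $Q$ to have a pole. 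Your approach is a bit more concrete but leans on the internals of the preceding proof; the paper's approach is a one-line computation once $dg(q)\ne 0$ is known. Either way the content is the same.
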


\begin{proof}
Since $f$ is weakly complete,  $\omega(p)\ne 0$ holds.
On the other hand,  since $p$ is a non-degenerate
singular point, we have $dg(p)\ne 0$.
Thus we get the assertion for $p$.
We next consider the case that $q$
is an end of $f$. Then, we have $|g(q)|=1$.
If $dg(q)=0$ holds, then the set $|g|=1$
bifurcates at the point $q$, which contradicts
the condition (2) in Definition \ref{def:f-data}.
So we get $dg(q)\ne 0$.
Since $Q=\omega dg$, we have
$$
ds^2_E=(1+|g|^2)^2\frac{|Q|^2}{|dg|^2}.
$$
Since $ds^2_E$ is complete at $z=q$,
the facts $|g(q)|=1$ and $dg(q)\ne 0$
yield that $Q$ must have a pole at $z=q$.
\end{proof}

\section{A characterization of Kobayashi surfaces}
\label{sec2}

We shall prove the following assertion:

\begin{theorem}\label{thm:main}
Let $f:S^2\setminus\{p_1,\dots,p_N\}\to \R^3_1$
be a weakly complete
maxface satisfying the following four conditions: 
\begin{enumerate}
\item \label{i:0}
The Gauss map $g:S^2\setminus\{p_1,\dots,p_N\}\to S^2$
of $f$ has at most 
poles\footnote{
This assumption is actually needed.
If we set $g=e^{\imag z}$ and
$\omega=-e^{-\imag z}dz$, then
$z=\infty$ is an essential singularity 
of $g$, and
the induced maxface 
satisfies (2)--(4) by setting $I(z)=\bar z$.
It is congruent to the helicoid
as in Example \ref{ex:helicoid}
by the coordinate change $\zeta=e^{\imag z}$. 
} 
at $p_1,\dots,p_N$.
\item \label{i:1} There exists an anti-holomorphic involution
$I:S^2\to S^2$ such that
$f\circ I=f$.
\item  \label{i:3}
The ends $p_1,...,p_N$ of $f$ lie on the fixed-point set of $I$.
\item \label{i:2}
The fixed point set $\Sigma^1$ of $I$ 
coincides with the fold singularities of $f$. Moreover, $f$ has
no singularities on $S^2\setminus\Sigma^1$.
\end{enumerate}
Then there exist an integer $n(\ge 2)$ and 
$2n$ real numbers $($called the {\it angular data} of $f)$
\begin{equation}\label{eq:angle}
0=\alpha_0\le \alpha_1\le \cdots \le \alpha_{2n-1}<2\pi
\end{equation}
and complex numbers
$b_1,...,b_{n-1}$
 $($not necessarily mutually distinct$)$
satisfying
\begin{equation}\label{eq:B}
|b_1|,...,|b_{n-1}|<1
\end{equation}
such that $f$ is 
homothetic to
a maxface associated to the
following Weierstrass data of fold-type:
\begin{equation}\label{eq:maxfld}
g=\prod_{i=1}^{n-1} \frac{z-b_i}{1-\overline{b_i}z},\qquad
\omega=
\frac{\imag \prod_{i=1}^{n-1}(1-\overline{b_i}z)^2}
{\prod_{j=0}^{2n-1}(e^{-\imag \alpha_j/2}z-{e^{\imag \alpha_j/2}})}dz.
\end{equation}
The number of distinct values in 
$\{\alpha_0, \alpha_1, \cdots , \alpha_{2n-1}\}$
coincides with $N$.
More precisely,  the set 
$\{e^{\imag \alpha_0},...,e^{\imag \alpha_{2n-1}}\}$
coincides with the set $\{p_1,...,p_{N}\}$.
Conversely, for each angular data satisfying \eqref{eq:angle},
we get a maxface satisfying $(1)-(4)$ by 
choosing the Weierstrass data
\eqref{eq:maxfld}.
\end{theorem}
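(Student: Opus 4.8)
The plan is to reconstruct the Weierstrass data $(g,\omega)$ of $f$ explicitly from the four hypotheses and then to check that, after a holomorphic change of variable on $S^2$ and a homothety, it agrees with \eqref{eq:maxfld}. First I would normalize the involution: the fixed point set $\Sigma^1$ of an anti-holomorphic involution of $S^2$ is either empty or a circle, and it is non-empty here, since $g$ is non-constant (a constant $g$, i.e.\ a plane, is incompatible with (2)--(4)) and hence its singular set $\{|g|=1\}$ --- which equals $\Sigma^1$ by (4) and \eqref{eq:dsL} --- is a non-trivial curve. So $I$ is conjugate by a M\"obius transformation to $z\mapsto 1/\bar z$, and after applying this change of variable (which replaces $f$ by an equivalent maxface) I may assume $I(z)=1/\bar z$, $\Sigma^1=S^1$, and by (3) all ends $p_j$ lie on $S^1$; also $g$ is rational on $S^2$ by (1). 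The symmetry $f\circ I=f$ forces the meromorphic $1$-form $dF=(-2g,\,1+g^2,\,\imag(1-g^2))\,\omega$ to satisfy $I^{*}(dF)=\overline{dF}$; separating components (using $I(z)=1/\bar z$) gives the reality relations $g(1/\bar z)=1/\overline{g(z)}$ and $\hat\omega(1/\bar z)=-\overline{z^{2}g(z)^{2}\hat\omega(z)}$, where $\omega=\hat\omega(z)\,dz$.

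Next I would pin down $g$. The open sets $\{|g|<1\}$ and $\{|g|>1\}$ are disjoint and cover $S^2\setminus S^1$, whose two connected components are $D=\{|z|<1\}$ and $D^{*}=S^2\setminus\overline{D}$; so $g(D)$ and $g(D^{*})$ are each contained in $D$ or in $D^{*}$, and they cannot both lie in $D$ (nor both in $D^{*}$), since otherwise $g(S^2)\subseteq\overline{D}$ by continuity, contradicting surjectivity of a non-constant rational map. After possibly replacing $(g,\omega)$ by $(1/g,\,g^{2}\omega)$ --- which is the isometry $(t,x,y)\mapsto(t,x,-y)$ --- the map $g$ therefore restricts to a rational self-map of $\overline D$ taking $S^1$ into $S^1$, and the classical argument (divide $g$ by the finite Blaschke product built from its zeros in $D$ and apply the maximum and minimum modulus principles) shows that $g=\lambda\prod_{i=1}^{n-1}(z-b_{i})/(1-\overline{b_{i}}z)$ with $|\lambda|=1$, $|b_{i}|<1$, and $n-1=\deg g\ge 1$. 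A rotation of the $z$-coordinate puts one end at $z=1$ (so that the angular data can be listed as in \eqref{eq:angle} with $\alpha_{0}=0$), and a rotation of the surface about the $t$-axis removes $\lambda$; both operations preserve $I(z)=1/\bar z$, $S^1$, and the form of $g$.

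To recover $\omega$ I would use that $f$ has no branch points (so $dF\neq0$ everywhere) and is weakly complete (so $ds^{2}_{E}=(1+|g|^{2})^{2}|\omega|^{2}$ is finite and non-zero off the ends). Comparing orders at the poles of $g$, which by the Blaschke form lie at the points $1/\overline{b_{i}}$, and at the ends $p_{j}\in S^1$ (where $|g|=1$, so $g$ is finite and non-zero), one finds that $\omega$ vanishes exactly at the poles of $g$, with twice their order, has a pole of some order $m_{j}\ge1$ at each $p_{j}$, and has no other zeros or poles; since $g$ has total pole order $n-1$ and a meromorphic $1$-form on $S^2$ has degree $-2$, this forces $\sum_{j}m_{j}=2n$. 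Dividing $\omega$ by $\prod_{i}(1-\overline{b_{i}}z)^{2}\,dz$ leaves a rational function with no finite zeros and poles only at the $p_{j}$, hence equal to $c\prod_{j}(z-p_{j})^{-m_{j}}$; listing the ends with multiplicities as $e^{\imag\alpha_{0}},\dots,e^{\imag\alpha_{2n-1}}$ and absorbing a unimodular factor rewrites the denominator as $\prod_{j=0}^{2n-1}(e^{-\imag\alpha_{j}/2}z-e^{\imag\alpha_{j}/2})$, with the number of distinct $\alpha_{j}$ equal to $N$ and $\{e^{\imag\alpha_{j}}\}=\{p_{j}\}$. It remains to fix $c$: a short computation shows $dg/(g^{2}\omega)$ is real-valued on $S^1$, so the fold condition \eqref{cf} (valid since every point of $S^1$ is a fold singularity by (4)) --- equivalently, the second reality relation above --- forces $c$ to be purely imaginary, and a homothety together with, if needed, the point reflection of $\R^3_1$ normalizes it to $\imag$. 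This yields exactly \eqref{eq:maxfld}, which is a Weierstrass data of fold-type.

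For the converse I would start from \eqref{eq:maxfld}, check directly that it is of fold-type in the sense of Definition~\ref{def:f-data} (the denominator vanishes only on $S^1$, so all ends lie on $S^1$; $g$ is a finite Blaschke product, so $|g|=1$ precisely on $S^1$; and $\operatorname{Re}[dg/(g^{2}\omega)]\equiv0$ on $S^1$ by the same computation), apply Proposition~\ref{prop:fold} to obtain a weakly complete maxface with fold singularities along $S^1$, and use the reality relations to deduce $I^{*}(dF)=\overline{dF}$ and hence $f\circ I=f$ for $I(z)=1/\bar z$; conditions (1)--(4) then follow. I expect the reconstruction of $\omega$ to be the main obstacle: getting its divisor exactly right is where the absence of branch points, weak completeness, and the position of the ends on the fixed circle all enter simultaneously, and the overall constant $\imag$ must be extracted from the fold condition rather than from any degree count. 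The Blaschke-product description of $g$ is the other delicate point, chiefly the bookkeeping of the unimodular constant $\lambda$ under the various normalizations and the verification that ``homothetic'' indeed absorbs both $\lambda$ and $c$.
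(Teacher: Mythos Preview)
Your proposal is correct and tracks the paper's overall strategy---normalize $I$ to $z\mapsto 1/\bar z$, derive the symmetry $\overline{g\circ I}=1/g$, express $g$ explicitly, pin down the divisor of $\omega$, and use the fold condition to fix the overall constant---but you streamline two steps in ways worth noting. First, you identify $g$ as a finite Blaschke product directly via the maximum/minimum modulus principle, which immediately gives $|b_i|<1$; the paper instead writes $g$ as a product $c\prod(z-b_i)/(1-\bar b_i z)$ from the zero/pole symmetry and only at the very end argues (by an intermediate value theorem) that all $b_i$ lie in the disk. Second, and more substantively, you set $n-1:=\deg g$ and read off $\sum_j m_j=2n$ from the degree of the $1$-form $\omega$ on $S^2$; the paper instead introduces the Hopf differential $Q=\omega\,dg$, applies Riemann--Roch to $Q$ to define $2n$ as its total pole order, and then needs a separate computation with the fold condition to establish $l=n-1$ (where $l=\deg g$). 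Your route is shorter here: the fold condition is used only to force the constant $c\in\imag\R$, not to relate $\deg g$ to the pole count of $\omega$. One small point you should make explicit: the divisor argument on $S^2$ requires that $\omega$ extend \emph{meromorphically} across the ends $p_j$, and this is exactly where the completeness lemma (which the paper invokes) enters; the observation that $ds_E^2$ is finite and non-zero off the ends does not by itself rule out essential singularities at the $p_j$.
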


\begin{proof}
Without loss of generality, we may set $I$ 
as the canonical inversion
$$
I(z)=1/\bar z.
$$
In this case the fixed point set $\Sigma^1$  of $I$
is the unit circle $S^1:=\{z\in \C\,;\, |z|=1\}$.
By \ref{i:1}, we have $f(1/\bar z)=f(z)$, and so
$$
\op{Re}(\overline{F(1/\bar z)})=\op{Re}(F(z))
$$
holds, where $F$ is the holomorphic lift of $f$.
By the identity theorem, we can conclude that
$$
\overline{F(1/\bar z)}=F(z)+\imag C 
$$
holds for some constant vector $C\in\R^3$.
In particular,
$\overline{d(F\circ I)}=dF$ holds.
Let $(g,\omega)$ be the Weierstrass data
of $f$. By \ref{i:0}, $g$ is a meromorphic
function on $S^2$.
Since $f$ is weakly complete,
the metric $ds^2_E$ 
given in \eqref{eq:dse}
is complete.
The well-known completeness lemma
(see the introduction of \cite{UY2}) 
yields that $\omega$
is a meromorphic $1$-form on $S^2$,
and so $(g,\omega)$ is a Weierstrass
data on $S^2$.
Since $f$ admits only fold singularities 
which lie in $S^1$
by \ref{i:2} and \ref{i:3},
$(g,\omega)$ satisfies (ii) and (iii) of 
Definition~\ref{def:f-data}.
On the other hand, the condition \ref{i:3} 
corresponds to (i) of Definition~\ref{def:f-data}.
Thus, we can conclude that $(g,\omega)$ is of fold-type.
Using \eqref{eq:W0}, we have
\begin{equation}\label{eq:g1g}
\overline{g\circ I}
=-\frac{\overline{d(F_0\circ I)}}
{\overline{d(F_1 \circ I)} +\imag \overline{d(F_2\circ I)}}
=-\frac{dF_0}{dF_1+\imag dF_2}
=-\frac{dF_0 (dF_1-\imag d F_2)}{(dF_0)^2}=\frac1g,
\end{equation}
where $F=(F_0,F_1,F_2)$.
We let $b_1,...,b_{l}$ be all the zeros of $g$ as a 
meromorphic function on
$S^2$.  
We do not need to assume that these 
numbers are distinct.
For example if $b_1=b_2$, 
then $g$ has zeros of order more than one. 
By \eqref{eq:g1g},
$$
1/\overline{b_1},\dots,1/\overline{b_l}
$$
must be the set of all poles of $g$, 
where $1/0$ would be regarded as
$\infty$.
By the fact $g\circ I=1/\overline{g}$, 
we may set $|b_1|<1$,
without loss of generality.
So we may set
\begin{equation}\label{eq:cg}
g=c \prod_{i=1}^l\frac{z-b_i}{1-\bar b_i z}
\qquad (|c|=1).
\end{equation}

On the other hand, since $Q$ is the $(2,0)$-component of
the complexification of the second fundamental form, 
$Q$ has no poles at each regular point of $f$.
By this together with Corollary \ref{lem:Q}, 
we can conclude that
$Q$ has a pole at $z=q$ 
if and only if $q$ is an end of $f$.
Let $\tilde n$ be the total sum of order of poles
of $Q:=\omega dg$,
and $m$ the total order of zeros of $Q$
which are contained in the interior of the unit disk.
Since $Q$ has no zeros on $S^1$
by Corollary \ref{lem:Q},
the fact that $Q\circ I=Q$ yields that
the total sum of the orders of 
zeros of $Q$ is equal to
$2m$. By the Riemann-Roch relation for $Q$
we have
\begin{equation}\label{eq:RR}
2m-\tilde n=-2\chi(S^2)=-4.
\end{equation}
In particular, $\tilde n$ is an even number,
and can be denoted by $\tilde n=2n$.
Also, as shown in the proof of Corollary \ref{lem:Q},
$dg(q)\ne 0$ holds at each end $q$.
So the set of poles of $Q$ coincides with the
set of poles of $\omega$, and the total sum of
orders of poles of $\omega$ is equal to $\tilde n(=2n)$.
Moreover, since all ends of $f$ lie in $S^1$,
there exist real numbers 
$\alpha_1,...,\alpha_{2n-1}$
satisfying \eqref{eq:angle} such that
$
e^{\imag \alpha_j}
$ 
($j=0,1,...,2n-1$)
are poles of $\omega$
counted with multiplicity.
In particular,
$\{e^{\imag \alpha_0},...,e^{\imag \alpha_{2n-1}}\}$
coincides with $\{p_1,...,p_N\}$.
\red{We now use the parameter change
\begin{equation}\label{eq:g000}
z\mapsto e^{-\imag \alpha_0}z.
\end{equation}
to conclude that we may assume $\alpha_0=0$.
We next consider the following change of the
Weierstrass data given by
\begin{equation}\label{eq:g-omega}
(g,\omega)\mapsto (a g,\bar a \omega)\qquad
(a\in S^1),
\end{equation}
which preserves
the first fundamental form $ds^2$ as in 
\eqref{eq:dsL} as well as the second fundamental form
$Q+\overline{Q}$, where $Q:=\omega dg$, and
gives the same maximal
surface up to a rigid motion of $\R^3_1$.
So we may assume that $c=1$ in the expression \eqref{eq:cg}.}

On the other hand, if $\omega$ has a zero at a regular 
point of $f$, it must be a pole of $g$. 
Since $1/\overline{b_i}$ ($i=1,...,l$) 
are regular points of $f$,
the regularity of $ds^2$ at $1/\overline{b_i}$ 
implies that
$\omega$  has the factor $(z-1/\overline{b_i})^2$
for each $i=1,...,l$.
Thus, we may set
\begin{align*}
\omega
&
=\frac{B\prod_{i=1}^l (1-\overline{b_i}z)^2}
{\prod_{j=0}^{2n-1}\left(e^{-\imag \alpha_j/2}z
-e^{\imag \alpha_j/2}\right)}dz 
\\
&=B \frac{\prod_{i=1}^l (1-\overline{b_i}z)^2}
{z^n\prod_{j=0}^{2n-1}\left(e^{-\imag \alpha_j/2}z^{1/2}
-e^{\imag \alpha_j/2}z^{-1/2}\right)}dz,
\end{align*}
where $B\in \C\setminus \{0\}$.
For the sake of simplicity,
we set 
\begin{equation}\label{eq:gamma}
c_j:=e^{\imag \alpha_j/2}
\qquad
(j=0,...,2n-1).
\end{equation}
Then
\begin{align*}
\frac{dg}{g^2\omega}
&=\frac1{g\omega}\frac{dg}{g}
=
\frac{z^{n}}B
\frac{\prod^l_{i=1}(1-\bar b_iz)}{\prod^l_{i=1}(z-b_i)}
\frac{\prod^{2n-1}_{j=0}(\bar c_j \sqrt{z}-c_j/\sqrt{z})}
{\prod^l_{i=1}(1-\bar b_iz)^2}
\frac{dg/dz}{g}\\
&=
\frac{z^{n-l}}{B}
\frac{1}{\prod^l_{i=1}(z-b_i)}
\frac{\prod^{2n-1}_{j=0}(\bar c_j \sqrt{z}-c_j/\sqrt{z})}
{\prod^l_{i=1}(z^{-1}-\bar b_i)}
\frac{dg/dz}{g}.
\end{align*}
If we set $z=e^{\imag \theta}$ ($\theta\in \R$),
then
$$
\prod^l_{i=1}(z-b_i)(z^{-1}-\bar b_i),
\qquad
\prod^{2n-1}_{j=0}(\bar c_j \sqrt{z}-c_j/\sqrt{z})
$$
are real-valued. 
So there exists a function $C_1(z)$ 
satisfying $C_1(e^{\imag t})\in \R$
for $t\in \R$
such that
$$
\frac{dg}{g^2\omega}
=C_1
\frac{z^{n-l}}{B}
\frac{dg/dz}{g}.
$$
Since
$$
\frac{dg/dz}{g}=\frac{d(\log g)}{dz}=\sum_{i=1}^l
\frac{1-|b_i|^2}{(z-b_i)(1-\overline{b_{i}}z)}
=\frac{1}{z}\sum_{i=1}^l
\frac{1-|b_i|^2}{(z-b_i)(z^{-1}-\overline{b_i})},
$$
the function
$C_2(z):=z {g_z}/{g}$
has the property that
$C_2(e^{\imag t})\in \R$
for $t\in \R$, where $g_z=dg/dz$.
Hence, we can write
$$
\frac{dg}{g^2\omega}
=C_1C_2 
\frac{z^{n-l-1}}{B}.
$$
On the other hand,
${dg}/{g^2\omega}$
is  $\imag \R$-valued on $S^1$
because of  (iii) of Definition \ref{def:f-data}.
Thus, we can conclude that
$
z^{n-l-1}/{B}
$
must also be $\imag \R$-valued for all $z\in S^1$.
In particular, we have
$
n-l-1=0
$
and $B\in \imag \R$.
By a homothety of $f$, we may assume that
$
B=\imag.
$
Since $0$ is neither a pole nor a zero
of $\omega$,
we have  $l=n-1$,
and so we get the expression
\eqref{eq:maxfld}.
Suppose that $b_1,...,b_{n-1}$ satisfies 
\eqref{eq:B},
then the fact
$$
\left|\frac{z-b_i}{1-\bar b_i z} \right|<1
\qquad(|z|<1)
$$
yields that $|g(z)|<1$ whenever $|z|<1$.
By our definition,
$|b_1|<1$ and
$g(z)=0$ if
$z=b_1$. 
On the other hand, if one of $b_i$ 
($i=2,...,n-1$)
satisfies
$|b_i|>1$, then
$g$ has a pole at $z=1/\bar b_i$.
In particular, $|g(z)|>1$ if $z$ is sufficiently
close to $1/\bar b_i$.
Since $b_1$ and
$1/\bar b_i$ lie in the unit disk,
by the intermediate value 
theorem, there exists a point $z_0$ ($|z_0|<1$)
such that $|g(z_0)|=1$.
Hence, all of $b_1,...,b_{n-1}$ must lie in the unit disk.
Conversely, one can easily verify that
the Weierstrass data $(g,\omega)$
given in \eqref{eq:maxfld} is of fold-type,
and induces a maxface
satisfying (1)--(4)
by Proposition~\ref{prop:fold}.
\end{proof}

\begin{definition}\label{def:K}
We call
a maxface given in this theorem a 
{\it Kobayashi surface} of order $n$. 
We denote by $\mathcal K_n$ the set of 
congruent classes of all Kobayashi surfaces
of order $n$.
\end{definition}

\begin{remark}\label{eq:rotation}
\red{As seen in the proof of Theorem \ref{thm:main}
(cf. \eqref{eq:g000} and \eqref{eq:g-omega}),
the circular rotation of the angular data 
$
(\alpha_0,....,\alpha_{2n-1})\mapsto
(\alpha_1,....,\alpha_{2n-1},\alpha_0)
$
and the translation
$
(\alpha_0,....,\alpha_{2n-1})\mapsto
(\alpha_0+\beta,....,\alpha_{2n-1}+\beta)
$
($\beta\in \R$)
do not change the resulting Kobayashi surface
whenever $b_1=\cdots=b_n=0$.
}
\end{remark}

\begin{corollary}\label{cor:main}
Let $f$ be a Kobayashi surface 
as in Theorem \ref{thm:main}.
If $f$ has at most two umbilics,
then there exist an integer $n(\ge 2)$ and 
$2n$ real numbers $(\alpha_0,\dots,\alpha_{2n-1})$
satisfying \eqref{eq:angle}
such that $f$ is
homothetic to a maxface associated to the
following Weierstrass data of fold-type:
\begin{equation}\label{eq:maxfld2}
g=z^{n-1},\qquad
\omega=
\frac{\imag \Lambda dz}{\prod_{j=0}^{2n-1}
(z-e^{\imag \alpha_j})},
\end{equation}
where
\begin{equation}\label{eq:beta}
\Lambda:=\exp\imag 
\left(\frac{\alpha_0+\cdots+\alpha_{2n-1}}2\right).
\end{equation}
\end{corollary}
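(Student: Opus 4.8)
The plan is to translate the hypothesis on umbilics into a ramification statement for the Gauss map $g$, deduce that $g$ is totally ramified at exactly two points, and then observe that the resulting normal form $g=z^{n-1}$ is reached from $f$ by the operations the theorem allows, namely reparametrizations together with rigid motions of $\R^3_1$. Since $f$ is a Kobayashi surface, by Definition \ref{def:K} I may assume its Weierstrass data is \eqref{eq:maxfld}, so that $g=\prod_{i=1}^{n-1}(z-b_i)/(1-\overline{b_i}z)$ is a degree-$(n-1)$ rational self-map of $S^2$ with $|g|<1$ on the open unit disk $\mathbb D:=\{|z|<1\}$, and $I(z)=1/\bar z$, $\Sigma^1=S^1$.

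First I would recall from Corollary \ref{lem:Q} that the umbilics of $f$ are exactly the zeros of the Hopf differential $Q=\omega\,dg$, and that $Q$ has no zeros on $S^1$; moreover, as in the proof of Theorem \ref{thm:main} (where $Q\circ I=Q$, and \eqref{eq:RR} with $\tilde n=2n$ gives total pole order $2n$) the total zero order of $Q$ on $S^2$ equals $2n-4$, half of it lying in $\mathbb D$. Because $Q$ has no zeros on $S^1$, its zero set is a union of genuine $I$-orbits $\{z_0,1/\bar z_0\}$ (with $z_0\ne 1/\bar z_0$), and the orders at the two points of an orbit agree by $I$-symmetry of $Q$. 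Hence the assumption that $f$ has at most two umbilics forces: either $n=2$ with $Q$ having no zeros, or $n\ge 3$ with the zero set of $Q$ equal to a single orbit $\{z_0,1/\bar z_0\}$, $z_0\in\mathbb D$, and $\operatorname{ord}_{z_0}Q=\operatorname{ord}_{1/\bar z_0}Q=n-2$.

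Next, for $n\ge 3$ (the case $n=2$ being identical, with $g$ a disk automorphism having no critical points), I would pass from $Q$ to $g$. Since $z_0\in\mathbb D$ is a regular point of $f$ which is not a pole of $g$, one has $\omega(z_0)\ne 0$ (in the proof of Theorem \ref{thm:main}, a regular point at which $\omega$ vanishes must be a pole of $g$), so $dg$ vanishes to order $n-2$ at $z_0$; thus $z_0$, and likewise $1/\bar z_0$, is a critical point of $g$ of multiplicity $n-2=\deg g-1$. Riemann--Hurwitz gives total critical multiplicity $2(n-1)-2=2n-4$ for $g$, so these are \emph{all} its critical points and $g$ is totally ramified (local degree $\deg g$) at each. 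A degree-$d$ rational self-map of $S^2$ with two totally ramified points becomes $z^d$ after pre- and post-composition with Möbius transformations; here I would take the precomposition to be the disk automorphism $\phi$ with $\phi(0)=z_0$ (so $\phi(\infty)=1/\bar z_0$ automatically, since $\phi$ commutes with $I$) and the postcomposition the disk automorphism $M$ with $M(g(z_0))=0$ (so $M(g(1/\bar z_0))=\infty$ automatically, using $g\circ I=1/\bar g$ from \eqref{eq:g1g} and $|g(z_0)|<1$), absorbing the leftover unit-modulus constant into $M$; thus $M\circ g\circ\phi=z^{n-1}$.

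Finally I would reparametrize $f$ by $\phi$ and apply the rigid motion of $\R^3_1$ that induces $M$ on the Gauss map (rigid motions of $\R^3_1$ act on the Gauss map of a spacelike maxface by automorphisms of $\mathbb D$, generalizing the rotations $g\mapsto ag$ noted around \eqref{eq:g-omega}), obtaining a maxface $\hat f$ with Gauss map $z^{n-1}$. Since disk automorphisms commute with $I$ and preserve $S^1$, and rigid motions preserve everything in sight, $\hat f$ still satisfies conditions (1)--(4); Theorem \ref{thm:main} applied to $\hat f$ then exhibits it as homothetic to a maxface with data \eqref{eq:maxfld}, and since its Gauss map $z^{n-1}$ has its only zero (of order $n-1$) at the origin, all the $b_i$ vanish, leaving $g=z^{n-1}$ and $\omega=\imag\,dz/\prod_{j=0}^{2n-1}(e^{-\imag\alpha_j/2}z-e^{\imag\alpha_j/2})$. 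Writing $\prod_{j=0}^{2n-1}(e^{-\imag\alpha_j/2}z-e^{\imag\alpha_j/2})=e^{-\imag(\alpha_0+\cdots+\alpha_{2n-1})/2}\prod_{j=0}^{2n-1}(z-e^{\imag\alpha_j})=\Lambda^{-1}\prod_{j=0}^{2n-1}(z-e^{\imag\alpha_j})$ turns this into \eqref{eq:maxfld2}. The hard part will be the conjugation of $g$ to $z^{n-1}$: it is essential to use the full freedom of rigid motions of $\R^3_1$ (which act on $g$ by disk automorphisms), because reparametrizations alone would reduce $g$ to $z^{n-1}$ only when $b_1=\cdots=b_{n-1}$, a condition strictly stronger than having at most two umbilics.
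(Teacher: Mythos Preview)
Your argument is correct and takes a genuinely different route from the paper's.  Both proofs start the same way---move the umbilic in the disk to $z=0$ by a disk automorphism (reparametrization), and use a Lorentz motion of $\R^3_1$ to arrange $g(0)=0$ (the paper phrases this as ``by a suitable motion in $\R^3_1$, we may assume that $g(0)=0$'', which is exactly your post-composition by $M$).  The divergence is in the step showing $g=z^{n-1}$.  The paper writes $g=z^{\mu}X/Y$ with $b_1=\cdots=b_\mu=0$ and $b_{\mu+1},\dots,b_{n-1}\neq 0$, computes $dg$ and $Q=\omega\,dg$ explicitly, and rules out $\mu<n-1$ by a polynomial-order contradiction at $z=0$ (the identity \eqref{eq:identity}).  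You instead read off from $\operatorname{ord}_{z_0}Q=n-2$ and $\omega(z_0)\neq 0$ that $g$ has local degree $n-1=\deg g$ at $z_0$ (and by $I$-symmetry at $1/\bar z_0$), and then Riemann--Hurwitz for $g:S^2\to S^2$ leaves no room for further ramification, so $g$ is M\"obius-conjugate to $z^{n-1}$ with both conjugating maps realizable as disk automorphisms.  Your route is shorter and more conceptual---it would apply verbatim to any finite Blaschke product playing the role of $g$---while the paper's computation is entirely self-contained within the explicit Weierstrass data and avoids appealing to Riemann--Hurwitz.  Your closing observation, that reparametrization alone forces $b_1=\cdots=b_{n-1}$ and that the Lorentz boost (post-composition by $M$) is what handles the general case, is a useful clarification of what is really happening in both arguments.
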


\begin{proof}
Let $n$ be the order of the Kobayashi surface $f$.
If $n=2$, then $f$ has no umbilics (cf.~Corollary \ref{lem:Q}) and the
degree of Gauss map is equal to $1$.
In this case,
by a suitable M\"obius transformation in $S^2$
preserving the unit circle $S^1$,
we may assume that $g=z$.
Namely, we may set $b_1=0$, and get the assertion.
So we may assume that $n\ge 3$,
Since $Q$ has no zeros
on $S^1$ (cf. Corollary \ref{lem:Q}),
the fact that $f\circ I=f$ implies  
there are at least two umbilics on $f$. 
So by our assumption, 
the number of umbilics must be exactly two.
By a suitable motion in $\R^3_1$,
we may assume that $g(0)=0$, that is,
$b_1=0$.
By a suitable M\"obius transformation in $S^2$
preserving the unit circle $S^1$,
we may also assume that $z=0$
is an umbilic of $f$.
In this case,  we may set
\begin{equation}\label{eq:star}
b_{1}=\cdots=b_{\mu}=0,\qquad
b_{\mu+1},...,b_{n-1}\ne 0,
\end{equation}
where $2\le \mu\le n-1$. 
If  $\mu=n-1$, then $b_i=0$ ($i=1,...,n-1$)
and $g=z^{n-1}$. Suppose that $\mu<n-1$, by way
of contradiction.
Using \eqref{eq:maxfld}  we write
$g=z^\mu X/Y$,
where
$$
X:=\prod_{i=\mu+1}^{n-1} (z-b_i),\qquad
Y:=\prod_{i=\mu+1}^{n-1} (1-\overline{b_i}z).
$$
Since
\begin{equation}\label{eq:g1}
dg=\frac{\mu z^{\mu-1}XYdz+z^{\mu}(dX)Y
-z^{\mu}X(dY)}{Y^2},\quad 
\omega=\frac{\imag Y^2dz}
{\prod_{j=0}^{2n-1} (e^{-\imag \alpha_j/2}z
-{e^{\imag \alpha_j/2}})},
\end{equation}
we have
$$
Q:=\omega dg=
\imag\frac{\mu z^{\mu-1}XYdz+z^{\mu}(dX)Y
-z^{\mu}X(dY)}
{\prod_{j=0}^{2n-1} (e^{-\imag \alpha_j/2}z
-{e^{\imag \alpha_j/2}})}dz.
$$
Since the zeros of $Y$ are not umbilics 
of $f$, the umbilics of $f$
correspond to the zeros of $dg$  (cf. Corollary \ref{lem:Q}).
Since $f$ has exactly two umbilics, 
we can write  
\begin{equation}\label{eq:g2}
dg=\frac{C(z)z^{n-2}}{Y^2}dz\qquad (C(z)\ne 0).
\end{equation}
Comparing \eqref{eq:g1} and \eqref{eq:g2},
we have that
\begin{equation}\label{eq:identity}
C(z)z^{n-\mu-1}=\mu XY+z (dX)Y-zX(dY).
\end{equation}
By
\eqref{eq:star},
the right hand side does not vanish at $z=0$,
a contradiction. Hence, we have
$\mu=n-1$.
In particular, \eqref{eq:maxfld} reduces to
\eqref{eq:maxfld2}.
\end{proof}

A Kobayashi surface
(cf. Definition \ref{def:K})
 associated to the Weierstrass data
as in \eqref{eq:maxfld2} is called 
a {\it Kobayashi surface of principal type}
with angular data $(\alpha_0,...,\alpha_{2n-1})$.
In other words,
Kobayashi surfaces of principal type
are obtained by setting
$b_1=\cdots=b_{n-1}=0$
in \eqref{eq:maxfld}.
We denote by $\mathcal K_n^0(\subset \mathcal K_n)$ 
the set of Kobayashi surfaces 
of principal type (cf. Definition \ref{def:K} for
$\mathcal K_n$). 
A Kobayashi surface which is not of principal type
is called of {\it general type}.

\begin{proposition}\label{prop:4}
The
set $\mathcal K_2$ of Kobayashi surfaces
of order $2$ 
has $3$ degrees of freedom,
and the set  
$\mathcal K_n$
Kobayashi surfaces
of order $n(\ge 3)$
has $4n-7$ degrees of freedom,
up to congruence and homothety.
Moreover, 
there are $2n-1$ degrees of freedom
for $\mathcal K_n^0$,
up to congruence and homothety.
Furthermore,  
Kobayashi surfaces
of order less than $4$
are all of principal type.
\end{proposition}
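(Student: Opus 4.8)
The plan is to read everything off the explicit normal form \eqref{eq:maxfld} of Theorem~\ref{thm:main} and to keep careful track of the ambiguity remaining in that normal form. First I would count the raw parameters: with the normalizations $\alpha_0=0$, $c=1$, $B=\imag$ of the proof of Theorem~\ref{thm:main} in force, a fold-type datum \eqref{eq:maxfld} of order $n$ is specified by $b_1,\dots,b_{n-1}$ in the open unit disk ($2(n-1)$ real parameters) together with $\alpha_1\le\dots\le\alpha_{2n-1}$ in $[0,2\pi)$ ($2n-1$ real parameters), hence by $4n-3$ real parameters in all, and by Proposition~\ref{prop:fold} and the converse part of Theorem~\ref{thm:main} each choice really does produce a Kobayashi surface. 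Undoing the three normalizations, the space $\mathcal{W}_n$ of all order-$n$ fold-type data on $S^2$ is $(4n-3)+3=4n$-dimensional. On $\mathcal{W}_n$ acts the group $G$ generated by the M\"obius automorphisms of $S^2$ preserving $S^1$ (reparametrizations), the Lorentz transformations of $\R^3_1$ (acting on $(g,\omega)$ through a M\"obius change of $g$ and the attendant conformal change of $\omega$), and the homotheties (scaling $\omega$); the first thing to check is that $G$ preserves $\mathcal{W}_n$ — under each generator the singular set stays $S^1$, the ends stay on $S^1$, and $ds^2_E$ stays zero-free — and that two data in $\mathcal{W}_n$ produce congruent-and-homothetic surfaces precisely when they lie on one $G$-orbit. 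Since $\dim G=3+3+1=7$, this yields $\dim\mathcal K_n=4n-7+d_n$, where $d_n$ is the dimension of the generic $G$-isotropy on $\mathcal{W}_n$.

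Next I would compute $d_n$. Since $G$ acts on the Gauss map in \eqref{eq:maxfld} — a Blaschke product of degree $n-1$ — by pre-composition with the reparametrization group and post-composition with the Lorentz M\"obius group, an isotropy element restricts to a self-similarity of that Blaschke product. For $n\ge 3$ the degree is $\ge 2$, and a generic Blaschke product of degree $\ge 2$ admits only finitely many self-similarities (for degree $2$ this already needs a small argument: the deck involution and the M\"obius transformations permuting the two critical values together give only finitely many liftings), so $d_n=0$ and $\dim\mathcal K_n=4n-7$. For $n=2$ the Gauss map is a degree-one Blaschke product, i.e.\ itself an element of the Lorentz M\"obius group: every reparametrization can then be compensated on the level of $g$ by a unique Lorentz transformation, giving a three-parameter family of candidate isotropy elements, and imposing that $\omega$ be fixed up to homothety is one complex condition, leaving a two-parameter isotropy. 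Hence $d_2=2$ and $\dim\mathcal K_2=4\cdot 2-7+2=3$. For the subclass $\mathcal K^0_n$ one instead fixes $g=z^{n-1}$; then $\omega$ in \eqref{eq:maxfld2} is determined by the angular data, so the data reduce to the $2n$ numbers $\alpha_0,\dots,\alpha_{2n-1}$, and modding out by the one-parameter translation $(\alpha_j)\mapsto(\alpha_j+\beta)$ of Remark~\ref{eq:rotation} (equivalently, imposing $\alpha_0=0$) leaves $2n-1$ parameters.

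Finally, for the last assertion I would invoke the Riemann--Roch identity \eqref{eq:RR}: an order-$n$ Kobayashi surface carries exactly $2m=2n-4$ umbilics, which is at most two precisely when $n\le 3$, and in that range Corollary~\ref{cor:main} forces principal type; this is consistent with $2n-1$ agreeing with $4n-7$ at $n=3$ and with $3$ at $n=2$. The step I expect to be the main obstacle is the isotropy computation: showing that a generic degree-$(n-1)$ Blaschke product with $n\ge 3$ carries no one-parameter family of self-similarities under pre- and post-composition by disk automorphisms — so that the $G$-action on $\mathcal{W}_n$ is generically almost free — and, dually, pinning down precisely the two-dimensional isotropy that appears when the Gauss map degenerates to degree one. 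Everything else is bookkeeping.
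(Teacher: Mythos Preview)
Your approach via the quotient $\mathcal{W}_n/G$ is quite different from the paper's, which is far more elementary: the paper never invokes the Lorentz group abstractly, but simply writes down a concrete normal form. For $n\ge 3$ it moves an umbilic to $0$ by a domain M\"obius transformation (two real parameters), then uses a suitable motion in $\R^3_1$ to arrange $g(0)=0$ (two more), sets $\alpha_0=0$ and $c=1$ by rotations, and $|B|=1$ by homothety; in the resulting normal form one has $b_1,\dots,b_{n-3}$ and $\alpha_1,\dots,\alpha_{2n-1}$, giving $2(n-3)+(2n-1)=4n-7$ parameters. For $n=2$ there are no umbilics, so one only moves $b_1$ to $0$ by a domain M\"obius transformation and then normalizes $\alpha_0=0$, $c=1$, $|B|$, leaving $\alpha_1,\alpha_2,\alpha_3$. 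The last clause ($n\le 3\Rightarrow$ principal) is obtained directly from $2\le\mu\le n-1$, without detouring through Riemann--Roch. So the paper is really giving a parametrization count, not a quotient-dimension computation.

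Your $n=2$ isotropy argument contains a genuine gap, and it is exactly the step you flagged. You correctly observe that for $g$ of degree one every domain M\"obius $\Phi$ is compensated on $g$ by the Lorentz element $\Psi=g\circ\Phi^{-1}\circ g^{-1}$, giving a three-parameter family of candidates. But the assertion that ``imposing that $\omega$ be fixed up to homothety is one complex condition'' is not right. A direct computation (using $\omega\mapsto(\Psi'(g))^{-1}\omega$ under Lorentz and $\omega\mapsto\Phi^*\omega$ under reparametrization) shows that, with $g=z$ and $\Psi=\Phi^{-1}$, the combined action on the Hopf differential $Q=\omega\,dg$ is exactly $Q\mapsto\lambda\,\Phi^*Q$; so an isotropy element is a disk automorphism $\Phi$ with $\Phi^*Q=\lambda^{-1}Q$. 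Since $Q$ has four simple poles on $S^1$, such a $\Phi$ must permute those four points, and for a generic configuration the only disk automorphism doing so is the identity. Hence the generic isotropy is \emph{zero}-dimensional, not two-dimensional, and your formula yields $\dim(\mathcal{W}_2/G)=8-7=1$, not $3$. In other words, two angular data $(\alpha_1,\alpha_2,\alpha_3)$ and $(\alpha_1',\alpha_2',\alpha_3')$ give congruent-and-homothetic surfaces whenever a disk automorphism carries $\{1,e^{\imag\alpha_1},e^{\imag\alpha_2},e^{\imag\alpha_3}\}$ to $\{1,e^{\imag\alpha_1'},e^{\imag\alpha_2'},e^{\imag\alpha_3'}\}$ --- a two-parameter redundancy your count misses and the paper's statement does not attempt to remove. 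If your goal is to reproduce the paper's ``degrees of freedom'' claim, you should abandon the quotient viewpoint and instead produce an explicit normal form as the paper does; if instead you want a genuine moduli dimension, be aware that for $n=2$ it disagrees with the stated $3$.
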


\begin{proof}
We suppose $n\ge 3$.
Then $f$ has umbilics.
Let $z=b$ be such an umbilic of $f$, 
then it is a zero of $Q$.
We may assume that 
the maximum value of the orders of zeros of $Q$
is attained at $z=b$.
By a suitable M\"obius transformation,
we may set $b=0$.
Then it holds that $dg(0)=0$.
Without loss of generality, we may also assume 
$g(0)=0$,
and have the following expression
like as in the proof of Corollary~\ref{cor:main};
\begin{equation}\label{eq:maxfld0}
g=z^{\mu}\prod_{i=1}^{n-\mu-1} \frac{z-b_i}{1-\overline{b_i} z},\,\,\,
\omega=
\frac{\imag \prod_{i=1}^{n-\mu-1}(1-\overline{b_i} z)^2}
{\prod_{j=0}^{2n-1}
(e^{-\imag \alpha_j/2}z-e^{\imag \alpha_j/2})}dz
\quad (2\le \mu\le n-1).
\end{equation}
We may fix $\mu$ equal to $2$ in \eqref{eq:maxfld0}
by allowing some of the $b_i$ to be zero. Thus 
\eqref{eq:maxfld0} has $n-3$ complex parameters 
$b_1,...,b_{n-3}$ in the unit disk and $2n-1$ 
(real) parameters
$\alpha_1,....,\alpha_{2n-1}$ on the interval $[0,2\pi)$.
Therefore,  
$\mathcal K_n$ ($n\ge 3$)
has
$4n-7$ degrees of freedom,
up to congruence and homothety.
On the other hand, the 
surfaces of principal type satisfy $\mu=n-1$,
and  they can be controlled by
the $2n-1$ angular parameters
$\alpha_1,...,\alpha_{2n-1}$, since $\alpha_0=0$.
If $n=3$, 
$$
2\le \mu \le n-1=2
$$
holds, and so $\mu=n-1$, in particular, $\mathcal K_3$
consists only of surfaces of principal type.

If $n=2$,  we can normalize $b_1=0$
by a suitable M\"obius transformation fixing
the unit circle.
Then, we may set 
\begin{equation}\label{neq2}
g=z,\quad
\omega=\frac{\imag e^{\imag (\alpha_1+\alpha_2+\alpha_3)/2}}
{(z-1)(z-e^{\imag \alpha_1})(z-e^{\imag \alpha_2})
(z-e^{\imag \alpha_3})} dz,
\end{equation}
that is, $\mathcal K_2$
consists only of surfaces of principal type,
and  can be controlled  by the $3$ parameters
$\alpha_1,\alpha_{2},\alpha_{3}$.
\end{proof}

\begin{example}[Scherk-type surfaces]\label{ex:Sgen}
We set
\begin{equation}\label{angle:S}
\alpha_j:=\frac{\pi j}{n} \qquad (j=0,...,2n-1).
\end{equation}
Regarding the fact that
$\sum_{j=0}^{2n-1}\alpha_j=(2n-1)\pi$,
\eqref{eq:maxfld2}
reduces to
\begin{equation}\label{eq:WKar}
g=z^{n-1},\qquad
\omega=\frac{dz}{z^{2n}-1}
\qquad (n=2,3,4,...).
\end{equation}
Its companion  $(-\imag g,\omega)$
is the Weierstrass data of the minimal surface
called the {\it Scherk
saddle tower} given in \cite{Kar}. 
The maximal surface $\mathcal S_n$ induced by 
\eqref{eq:WKar}
is the most important example of a
Kobayashi surface of principal type.
(The fact that $\mathcal S_2$ induces an entire graph
was shown in Kobayashi \cite{K}, see also
Example \ref{ex:Sch}.)
We shall show later that $\mathcal S_n$ can be  real analytically
extended as an entire graph for $n\ge 3$. 
\end{example}

\begin{example}[Jorge-Meeks type  maximal surfaces]
\label{ex:J}
We set
\begin{equation}\label{angle:J}
\alpha_{2j}=\alpha_{2j+1}:=\frac{2\pi j}{n} \qquad 
(j=0,1,...,n-1).
\end{equation}
Then 
\eqref{eq:maxfld2}
reduces to
\begin{equation}\label{eq:WJM}
g=z^{n-1},\qquad
\omega=\frac{\imag dz}{(z^n-1)^2}
\qquad (n=2,3,4,...),
\end{equation}
and its companion
 $(-\imag g,\omega)$
is the Weierstrass data of the minimal surface
called the Jorge-Meeks $n$-noid. 
So,  the induced maxface 
is called the
{\it Jorge-Meeks type maximal surface},
and we denote it by $\mathcal J_n$.
As shown in the authors' previous work \cite{FKKRUY},
the analytic extension of $\mathcal J_n$ is properly embedded
for all $n\ge 2$.
Although $\mathcal J_n$ ($n\ge 3$) is not,
the surface $\mathcal J_2$ is an entire graph
of mixed type, given in \red{\eqref{eq:J0}}.
\end{example}

We next consider the Kobayashi surface satisfying
$
\{\alpha_0,\alpha_1,\alpha_2,\alpha_3\}\subset \{0,\pi\}
$.
The surface of type $(0,0,\pi,\pi)$ is $\mathcal J_2$
as in Example \ref{ex:J}.
The most degenerate case is of type $(0,0,0,0)$
as follows:
 
\begin{example}[The ruled Enneper surface]
\label{exa:0000}
In case of 
$(\alpha_0,\alpha_1,\alpha_2,\alpha_3)=(0,0,0,0)$,
we have
$$
g=z,\quad \omega=\frac{\imag dz}{(1-z)^4},
$$
and it induces the following Kobayashi surface
\begin{align*}
f&=
\biggl(
\frac{\sin \theta (-6 u \cos \theta+\cos (2 \theta)+5)}
{12 (\cos \theta-u)^3},
\frac{\sin \theta \left(-3 u \cos \theta+\cos (2 \theta)
+3 u^2-1\right)}{6 (\cos \theta-u)^3},\\
&\phantom{aaaaaaaaaaaaaaaaaaaaaaaaaaaaaaaaaaaaaaaaaaaaaaa}
-\frac{u \cos \theta-1}{2 (u-\cos \theta)^2}\biggr),
\end{align*}
where
$z=r e^{\imag \theta}$ and $u:=(r+r^{-1})/2$.
Then $f$ can be extended to the region 
$\{(u,\theta)\,;\, 1>u>\cos\theta\}$,
which is denoted by $\tilde f$.
The image of $\tilde f$ coincides with the 
set satisfying
(see Figure \ref{fig:00}, left)
$$
\Phi=0,\quad
\Phi:=
\frac{4 t^3}{3}+4 t^2 x+4 t x^2-2 t y+t+\frac{4 x^3}{3}-2 x y.
$$
Since
$$
\Phi_y=-2(x+t), \quad \Phi_t|_{(t,x,y)=(t,-t,y)}=1-2y,\quad
\Phi_x|_{(t,x,y)=(t,-t,y)}=-2y,
$$
the gradient vector $(\Phi_t,\Phi_x,\Phi_y)$
never vanishes on $\R^3$.
Hence the set $\Phi=0$ has no self-intersections.
Thus the analytic extension of the Kobayashi
surface with
angular parameter $(0,0,0,0)$ is
embedded, that is, the problem
is affirmative even for this case. 
The resulting surface is a ruled surface
found in Kobayashi \cite{K},
and is called the conjugate of 
Enneper's surface of 2nd kind.
In fact, $\Phi(t,x,y)=0$
implies that 
$$
\Phi\left(t-k,x+k,y-\frac{k}{2(t+x)}\right)=0
$$
for all $k\in \R$.
Space-like ruled maximal surfaces
in $\R^3_1$ are classified in \cite{K}.
\end{example}

\begin{figure}[htb]
 \centering
 \includegraphics[height=4.8cm]{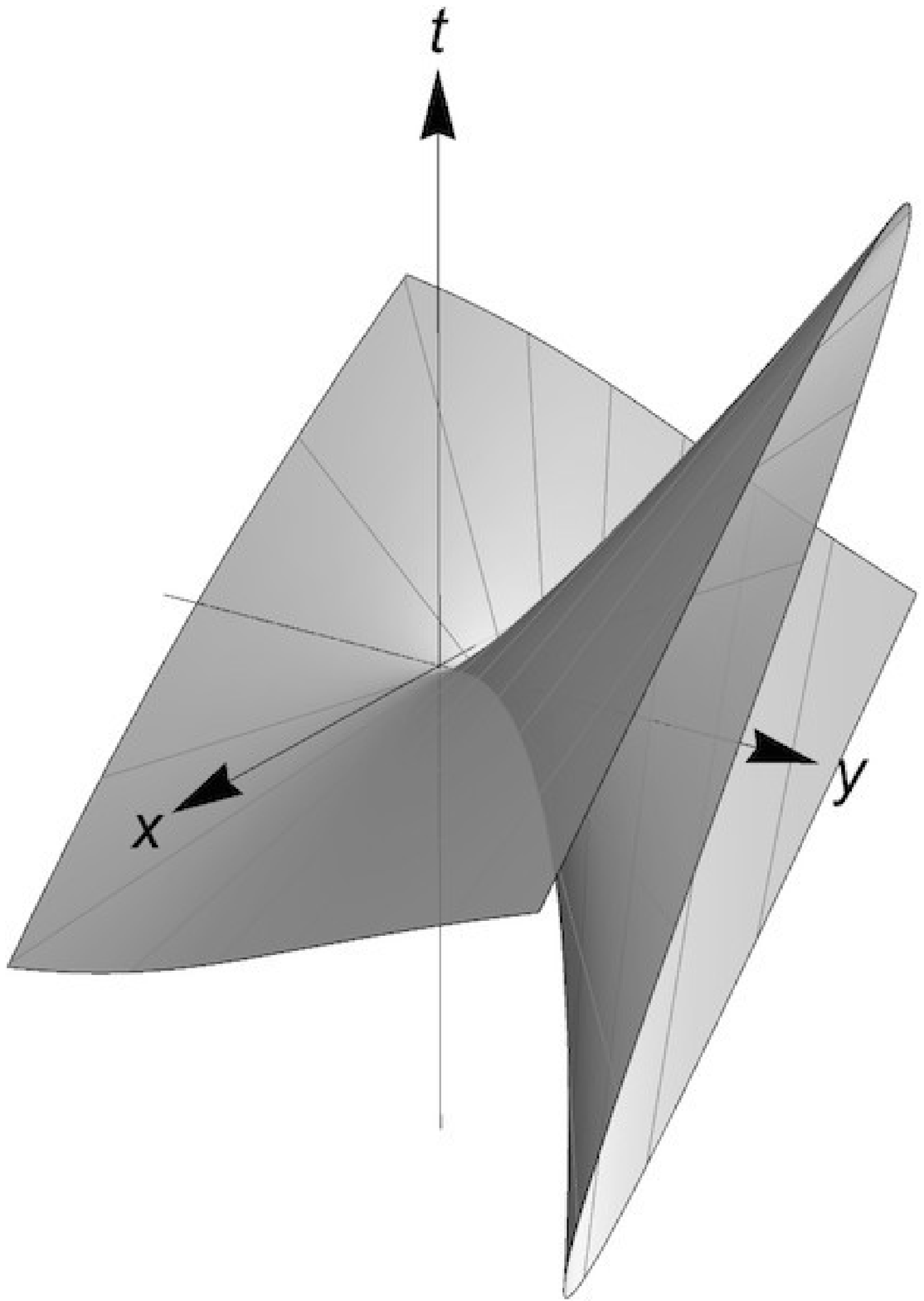}  
\qquad
 \includegraphics[height=4.8cm]{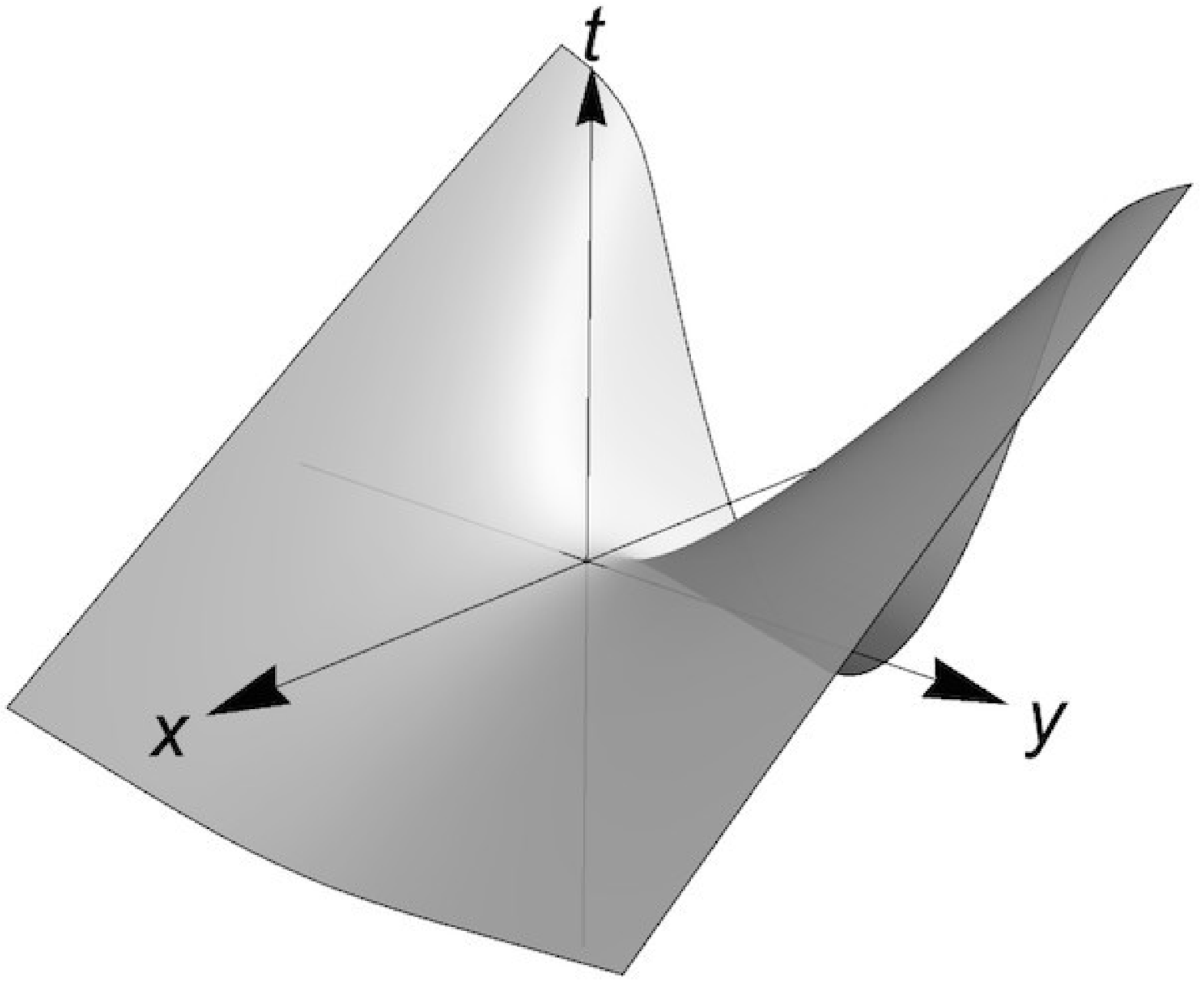}  
\caption{%
Example \ref{exa:0000} (left)
and
Example \ref{angle:third} (right)
}
\label{fig:00}
\end{figure}

\begin{example}
\label{angle:third} 
We set $n=2$ and 
$\alpha_0=\alpha_1=\alpha_2=0$
and $\alpha_3=\pi$.
Then
\eqref{eq:maxfld2}
for $b_1=0$
reduces to
\begin{equation}\label{eq:3}
g=z,\qquad
\omega=\frac{-dz}{(z-1)^3(z+1)}.
\end{equation}
By setting $z=r e^{\imag \theta}$,
the induced maximal surface is given by
\begin{equation}\label{eq:f0000}
f=
\frac18\left(
A+B
,A-B, -\frac{8r \sin \theta}{D_-}\right),
\end{equation}
where
$$
A=-4 \frac{\left(r^3+r\right) \cos \theta}{D_-^2}
+\frac{8 r^2}{D_-^2},\quad
B=\log \frac{D_+}{D_-},\quad
D_\pm=r^2\pm 2 r \cos \theta+1.
$$
In particular,  the image of $f$
lies in the set (see Figure \ref{fig:00})
$$
\mathcal G=\left\{(t,x,y)\in \R^3_1\,;\,
\Phi=0\right\},
\quad
\Phi:=
\frac{1}{2} (e^{4 (t+x)}-1)+2 (t-x)-4 y^2=0.
$$
Since $\Phi_t=2 + 2 e^{4 (t+x)}\ge 2(>0)$,
\cite[Corollary 1]{ZG} yields that
$\mathcal G$ can be realized as the image of
an entire graph of $x,y$.
This implies that $\mathcal G$ can be considered
as the analytic extension of $f$.
It should be remarked that Akamine \cite{A} 
independently found this surface.
Moreover, he showed that the surface
can be foliated by parabolas. 
\end{example}

\section{Analytic extension of Kobayashi surfaces}

A Kobayashi surface
 $f$ of order $n$ is 
invariant under the symmetry
$r \mapsto 1/r$, where $z=r e^{\imag \theta}$.
The singular set $S^1:=\{|z|=1\}$ of $f$
coincides with the fixed point set under the symmetry.
So, like as the case of the Jorge-Meeks type maximal
surface (cf. \eqref{eq:WJM})
discussed in the authors' previous work
\cite{FKKRUY},
it is natural to expect that we can introduce a 
new variable $u$ by
 \begin{equation}\label{eq:u-r}
    u := \frac{r+r^{-1}}{2},
\end{equation}
which is invariant under the symmetry
$r \mapsto 1/r$.
We set
$$
\bar D_1^*:=\{z\in \C\,;\, 0<|z|\le 1\}.
$$
To obtain the analytic extension of the image of $f$,
we define an analytic map
$$
\iota:
\bar D_1^*\ni z=r e^{\imag \theta} \mapsto 
(\frac{r+r^{-1}}2,\theta)\in 
\R\times \R/2\pi \Z.
$$
The image of the map $\iota$ is
given by
$$
\hat \Omega_n:=\{(u,\theta)\in 
\R\times \R/2\pi \Z\,;\,
u \ge 1
\}.
$$
The map $\iota$ is bijective,
whose inverse is given by 
$$
\iota^{-1}:
\hat \Omega_n\ni (u,\theta)
\mapsto 
(u-\sqrt{u^2-1},\theta)\in \bar D_1^*.
$$
If we regard $f$ as a function of $(u,\theta)$,
the origin $z=0$ in the original source space of 
$f$ does not lie in the $(u,\theta)$-plane.
To indicate what the origin in the old
complex coordinate $z$ becomes in the
new real coordinates $(u,\theta)$,
we attach a new point $p_{\infty}$
to $\hat \Omega_n$
as the `point at infinity',
and extend the map $\iota$ so that
$$
\iota(0)=p_\infty.
$$
Hence we have a one-to-one correspondence 
between $\{|z| \le 1 \}$ 
and $\hat \Omega_n\cup\{p_\infty\}$.
In particular, 
$\hat \Omega_n\cup\{p_\infty\}$ can be 
considered as  an analytic $2$-manifold.
The purpose of this section is
to prove the following:

\begin{theorem}\label{thm:domain}
Let $f$ be a Kobayashi surface
associated to the Weierstrass data as in
\eqref{eq:maxfld}.
Then  
$
df(z)=\Re(-2g,1+g^2,\imag(1-g^2))\omega
$ 
can be parametrized using new parameters
$u=(r+r^{-1})/2$ and $\theta$,
where $z=r e^{\imag \theta}$
$(r>0,\theta\in [0,2\pi))$.
Moreover, $df \circ \iota$ can be analytically extended
on the set
\begin{equation}\label{eq:omega2}
\Omega_{\alpha_{0},...,\alpha_{2n-1}} :=
   \left\{
    (u,\theta)\in \R\times \R/2\pi\Z\,;\,
      u> \max_{j=1,\dots,N}
             \left[\cos\left(\theta-\alpha_{i_j}\right)\right]
     \right\}\cup \{p_\infty\}, 
\end{equation}%
where
\begin{equation}\label{red-angle}
0=\alpha_{i_1}<\cdots <\alpha_{i_N}<2\pi
\qquad (0=i_1<i_2<\cdots<i_N\le 2n-1)
\end{equation}
are the distinct angular values in \eqref{eq:angle}.
\end{theorem}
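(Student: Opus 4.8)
The plan is to write $df\circ\iota$ explicitly as a vector of analytic functions of the two real variables $(u,\theta)$, and to identify the locus in the $(u,\theta)$-plane where every denominator that appears is nonvanishing. First I would record the Weierstrass data in the form \eqref{eq:maxfld} and compute $df=\Re(-2g,1+g^2,\imag(1-g^2))\omega$. Writing $z=re^{\imag\theta}$, the key observation is that $g$ is a finite Blaschke product, so $|g|=1$ on $S^1$ and $g\bar g$, $g+\bar g$ etc.\ are real there; more precisely the numerator factors $\prod_i(z-b_i)$ and the correction factors $(1-\overline{b_i}z)$ combine so that the one-forms $g\omega$, $g^2\omega$, $\omega$, after multiplying by $dz=\imag z\,d\theta$ along $|z|=r$ fixed and taking real parts, become rational expressions in $r$, $r^{-1}$ and $e^{\pm\imag\theta}$. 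The crucial point is that each such expression is invariant, or transforms in a controlled way, under $r\mapsto 1/r$; since $u=(r+r^{-1})/2$ generates the ring of $r\mapsto 1/r$-invariants (together with $r-r^{-1}=\pm 2\sqrt{u^2-1}$, which only enters to even powers once one restricts to the genuinely invariant combinations forced by $f\circ I=f$), one may re-express $df\circ\iota$ rationally in $u$, $\cos\theta$, $\sin\theta$. This is exactly the mechanism used for $\mathcal J_n$ in \cite{FKKRUY}, and I would cite that computation as the template.

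Next I would analyze the denominator. In $\omega$ the only poles are at the ends $e^{\imag\alpha_j}$, i.e.\ the denominator is $\prod_{j=0}^{2n-1}(e^{-\imag\alpha_j/2}z-e^{\imag\alpha_j/2})$, whose modulus squared along the circle of radius $r$ equals $\prod_j\bigl(r^2-2r\cos(\theta-\alpha_j)+1\bigr)$, up to a nonzero constant. Dividing numerator and denominator by the appropriate power of $r$ and using \eqref{eq:u-r}, each factor becomes $2r\bigl(u-\cos(\theta-\alpha_j)\bigr)$. Hence, after the substitution, $df\circ\iota$ is a real-analytic vector-valued function on the set where $u-\cos(\theta-\alpha_j)\neq 0$ for all $j$, and where additionally $r\neq 0$, i.e.\ away from $p_\infty$. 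On the physical domain $r\le 1$ one has $u\ge 1\ge\cos(\theta-\alpha_j)$ with equality only on $S^1$, so the formula in these coordinates already lives on $\hat\Omega_n$; the content of the theorem is that the \emph{same rational formula} continues to make sense, and remains real-analytic, on the strictly larger open set $\Omega_{\alpha_0,\dots,\alpha_{2n-1}}$ where $u>\max_j\cos(\theta-\alpha_{i_j})$. Since repeated values among the $\alpha_j$ only raise the multiplicity of the corresponding factor without changing its zero locus, the maximum may be taken over the distinct values $\alpha_{i_1},\dots,\alpha_{i_N}$, which is precisely \eqref{eq:omega2}.

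Finally I would check the extension across $p_\infty$. Near $z=0$ the data \eqref{eq:maxfld} has $\omega$ holomorphic and nonvanishing (the denominator is $\prod_j(\cdots)$ with constant term $\prod_j(-e^{\imag\alpha_j/2})\neq 0$, and the numerator $\prod_i(1-\overline{b_i}z)^2$ has value $1$ at $z=0$), while $g$ vanishes to some finite order; so $df$ is holomorphic at $z=0$, hence $df\circ\iota$ extends analytically to $p_\infty$ once one checks that the coordinate chart around $p_\infty$ inherited from $z$ (namely $z$ itself, since $\iota(0)=p_\infty$) is compatible with the real-analytic structure declared on $\hat\Omega_n\cup\{p_\infty\}$; this compatibility is exactly what the construction of that $2$-manifold in the paragraph preceding the theorem provides. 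The main obstacle I anticipate is purely bookkeeping: carrying the Blaschke factors $1-\overline{b_i}z$ through the computation and verifying that the non-invariant pieces (odd powers of $r-r^{-1}$, equivalently odd powers of $\sqrt{u^2-1}$) genuinely cancel, so that the result is rational — and not merely algebraic — in $(u,\cos\theta,\sin\theta)$; this is where one must use $f\circ I=f$ (equivalently condition (iii) of Definition~\ref{def:f-data}, already verified for fold-type data) rather than just the form of $\omega$. Once that cancellation is in hand, the description of the domain and the analyticity statement follow immediately from inspecting the denominator $\prod_j\bigl(u-\cos(\theta-\alpha_j)\bigr)$.
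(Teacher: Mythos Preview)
Your proposal is correct and follows essentially the same route as the paper: compute $\Re(\phi_k)$ in the variables $(u,\theta)$, identify the denominator as $\prod_j\bigl(u-\cos(\theta-\alpha_j)\bigr)$ via $|z-e^{\imag\alpha_j}|^2=2r\bigl(u-\cos(\theta-\alpha_j)\bigr)$, and use the $r\mapsto 1/r$ symmetry (equivalently $f\circ I=f$) to show the numerator is genuinely polynomial in $u$. The paper carries out your ``bookkeeping'' step by recording the explicit transformation laws $p_k(1/\bar z)=-\overline{p_k(z)}/\bar z^{2n-2}$ and $q(1/\bar z)=\overline{q(z)}/\bar z^{2n}$, splitting $\Re(\phi_k)$ into its $du$- and $d\theta$-components $X(r,\theta)$, $Y(r,\theta)$, checking that each satisfies $X(1/r,\theta)=r^{-4n}X(r,\theta)$ (the $1/(r-r^{-1})$ from $dr\to du$ is absorbed into $X$), and then invoking a short appendix lemma on self-reciprocal polynomials to conclude $X=r^{2n}x(u,\theta)$, $Y=r^{2n}y(u,\theta)$ with $x,y$ polynomial in $u$---exactly the cancellation you anticipated.
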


For the sake of simplicity, we set
\begin{equation}\label{eq:beta-2n}
\beta_{j}:=\alpha_{i_{j+1}} \qquad (j=0,...,N-1),\quad
\beta_{N}:=2\pi
\end{equation}
and (cf.~Figure~\ref{fig:region})
\begin{equation}\label{eq:beta-interval}
\begin{aligned}
   \gamma_j&:=
       \frac{\beta_j+\beta_{j+1}}{2},\qquad (j=0,\dots,N-1),\\
   I_j&:= \begin{cases}
	  \bigl[
	    \gamma_{j-1},\gamma_{j}
	   \bigr]\qquad &(j=1,\dots,N-1),\\
	   [0,\gamma_0] \cup [\gamma_{N-1},2\pi]
	  \qquad &(j=0).
	 \end{cases}
\end{aligned}
\end{equation}
\begin{figure}
 \centering
 \includegraphics[width=0.7\textwidth]{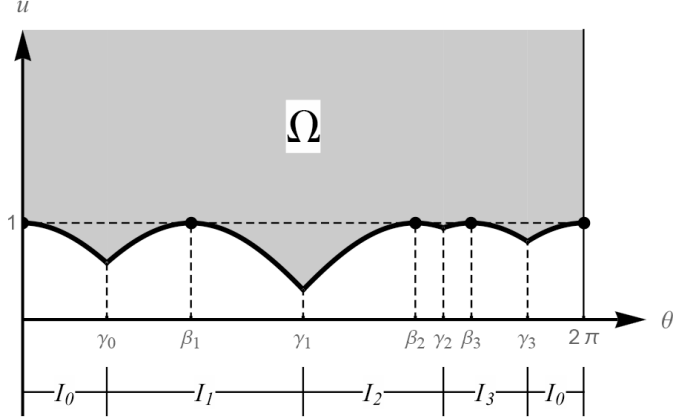}
 \caption{
The figure of
$\Omega:=\Omega_{\alpha_{0},...,\alpha_{3}}$
for $n=2$ and $N=4$} \label{fig:region}
\end{figure}

To prove the theorem, we prepare the
following lemma:

\begin{lemma}\label{lem:interval}
For each pair of numbers  $i,j\in \{0,\dots,N-1\}$ 
$(i\neq j)$, the following inequality holds
\begin{equation}\label{eq3-2}
    \cos(\theta-\beta_i) \ge \cos(\theta-\beta_j)\qquad 
     \qquad (\theta\in I_i).
\end{equation}
Moreover, equality holds if and only if
$(j,\theta)=(i-1,\gamma_{i-1})$ 
or $(j,\theta)=(i+1,\gamma_{i})$.
\end{lemma}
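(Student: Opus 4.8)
The plan is to reduce the inequality to an elementary statement about the cosine function restricted to a single period. First I would recall that, by the definitions \eqref{eq:beta-2n} and \eqref{eq:beta-interval}, the numbers $\beta_0=0<\beta_1<\cdots<\beta_{N-1}<\beta_N=2\pi$ partition the circle $\R/2\pi\Z$, and the $\gamma_j=(\beta_j+\beta_{j+1})/2$ are the midpoints; the interval $I_i$ is precisely the set of angles $\theta$ that are closer (in the circular metric) to $\beta_i$ than to any other $\beta_k$. So the claim is equivalent to saying: if $\theta$ is the nearest node to $\beta_i$, then $\cos(\theta-\beta_i)\ge\cos(\theta-\beta_j)$ for every other node $\beta_j$. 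Since $t\mapsto\cos t$ is an even function that is strictly decreasing on $[0,\pi]$, we have $\cos(\theta-\beta_i)\ge\cos(\theta-\beta_j)$ precisely when the circular distance $d(\theta,\beta_i)$ (i.e.\ the representative of $\theta-\beta_i$ in $[-\pi,\pi]$, taken in absolute value) is $\le d(\theta,\beta_j)$. Thus the whole lemma becomes the purely combinatorial assertion that $\theta\in I_i$ forces $d(\theta,\beta_i)\le d(\theta,\beta_j)$ for all $j\ne i$, with equality exactly at the two endpoints of $I_i$ and only against the two neighbouring nodes $\beta_{i\pm1}$.

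Next I would prove that combinatorial assertion directly. Fix $\theta\in I_i$. For $j=1,\dots,N-1$ the interval $I_i=[\gamma_{i-1},\gamma_i]$ is the set of points whose nearest node among $\{\beta_0,\dots,\beta_{N-1}\}$ is $\beta_i$, because $\gamma_{i-1}$ and $\gamma_i$ are by construction the midpoints between $\beta_i$ and its two neighbours, and any node $\beta_j$ with $j\notin\{i-1,i,i+1\}$ lies strictly beyond one of those midpoints, hence is strictly farther from every point of $I_i$ than $\beta_i$ is. For the wrap-around case $i=0$, the interval $I_0=[0,\gamma_0]\cup[\gamma_{N-1},2\pi]$ plays the same role relative to $\beta_0=0\equiv 2\pi$, with neighbours $\beta_1$ and $\beta_{N-1}$; here one simply works with the circular metric so that $[\gamma_{N-1},2\pi]$ and $[0,\gamma_0]$ glue into one arc centered at $0$. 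On the interior of $I_i$ one gets the strict inequality $d(\theta,\beta_i)<d(\theta,\beta_j)$ for all $j\ne i$, hence $\cos(\theta-\beta_i)>\cos(\theta-\beta_j)$. At $\theta=\gamma_{i-1}$ one has $d(\theta,\beta_i)=d(\theta,\beta_{i-1})$ (equidistance is the definition of the midpoint), giving equality against $j=i-1$, and $d(\theta,\beta_i)<d(\theta,\beta_j)$ for every other $j$; symmetrically at $\theta=\gamma_i$ the equality is against $j=i+1$. Translating back via the strict monotonicity of $\cos$ on $[0,\pi]$ yields \eqref{eq3-2} together with the stated equality characterization.

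The only genuine care point — and the step I would be most careful with — is the bookkeeping of the wrap-around index $i=0$ and the index arithmetic $i\pm1$ modulo $N$: one must check that $\gamma_{-1}$ is to be read as $\gamma_{N-1}-2\pi$ (equivalently, work on the circle), and that when $N=2$ the two neighbours $\beta_{i-1}$ and $\beta_{i+1}$ coincide, so that the two equality endpoints of $I_i$ both test against the single other node — a degenerate but harmless case that still matches the statement. Apart from this indexing hygiene, the argument is elementary: it is just the statement that the closed cells $I_i$ of the Voronoi partition of the circle determined by the nodes $\{\beta_0,\dots,\beta_{N-1}\}$ are exactly the level sets "$\beta_i$ is a nearest node," and that two cells meet exactly along the midpoint between two consecutive nodes. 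No hard analysis is involved; the substance is entirely in setting up the dictionary between $\cos$-comparisons and circular-distance comparisons and then invoking the monotonicity of $\cos$ on $[0,\pi]$.
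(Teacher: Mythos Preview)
Your argument is correct and takes a genuinely different route from the paper's. You translate the cosine comparison into a circular-distance comparison via the even-ness and strict monotonicity of $\cos$ on $[0,\pi]$, and then identify the intervals $I_i$ as the Voronoi cells of the nodes $\beta_0,\dots,\beta_{N-1}$ on $\R/2\pi\Z$; the lemma becomes the defining property of those cells, with the equality characterization read off from the cell boundaries. The paper instead proceeds by direct computation: after a cyclic reduction it writes
\[
\cos(\theta-\beta_i)-\cos(\theta-\beta_j)
=2\sin\Bigl(\tfrac{\beta_i-\beta_j}{2}\Bigr)\sin\Bigl(\theta-\tfrac{\beta_i+\beta_j}{2}\Bigr),
\]
splits into the cases $j\le i-1$ and $j\ge i+1$, and checks in each that both sine factors have the same sign on $I_i$; the equality cases drop out from the vanishing of one factor at an endpoint. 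Your approach is more conceptual and explains \emph{why} the inequality holds, while the paper's is more self-contained in that it never invokes the Voronoi picture. The one place your sketch is thin is the clause ``any node $\beta_j$ with $j\notin\{i-1,i,i+1\}$ lies strictly beyond one of those midpoints, hence is strictly farther from every point of $I_i$'': to make this airtight, observe that any arc from a point of $I_i$ to such a $\beta_j$ must cross $\beta_{i-1}$ or $\beta_{i+1}$ (since $\beta_j$ lies on the complementary arc), so its length strictly exceeds $d(\theta,\beta_{i\pm1})\ge d(\theta,\beta_i)$. With that one line added, your argument is complete and arguably cleaner than the trigonometric case analysis.
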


\begin{proof}
Instead of $\R$, we consider $\R/2\pi\Z$, and
then the assertion is invariant
under the cyclic permutation of $\beta_0,...,\beta_{N-1}$.
So, it is sufficient to consider the case that
$i \ge 1$ and $j\ge 2$.
It holds that
 \begin{equation}\label{eq:cos-sub}
  \cos(\theta-\beta_i) - \cos(\theta-\beta_j)
  = 2\sin\left(\frac{\beta_i-\beta_j}{2}\right)
      \sin\left(\theta-\frac{\beta_i+\beta_j}{2}\right).
 \end{equation}
Since $\theta\in I_i$, we have
 \[
    \theta-\frac{\beta_i+\beta_j}{2}
     \in\left[\gamma_{i-1}-\frac{\beta_i+\beta_j}{2},
              \gamma_{i}-\frac{\beta_i+\beta_j}{2}\right]
      =\left[\frac{\beta_{i-1}-\beta_j}{2},
              \frac{\beta_{i+1}-\beta_j}{2}\right].
 \]

If  $j\leq i-1$, then 
$\beta_j\leq \beta_{i-1}<\beta_i$ and $\beta_{i+1}-\beta_j<2\pi$
hold, and
$$
    \theta-\frac{\beta_i+\beta_j}{2}
    \in \left[\frac{\beta_{i-1}-\beta_j}{2},
              \frac{\beta_{i+1}-\beta_j}{2}\right)
    \subset \left[0,\pi\right),\qquad
    \frac{\beta_{i}-\beta_{j}}{2}\in (0,\pi).
$$
Hence, by \eqref{eq:cos-sub} we have
\eqref{eq3-2}.
Equality holds if and only if
$j=i-1$ and $\theta=\gamma_{i-1}$.

On the other hand, if $j\geq i+1$, then 
$\beta_j\geq \beta_{i+1}>\beta_i$ and
$\beta_{i-1}-\beta_j>-2\pi$
hold. Thus we have
$$
    \theta-\frac{\beta_i+\beta_j}{2}
    \in \left(\frac{\beta_{i-1}-\beta_j}{2},
              \frac{\beta_{i+1}-\beta_j}{2}\right]
    \subset \left(-\pi,0\right],\qquad
    \frac{\beta_{i}-\beta_{j}}{2}\in (-\pi,0).
$$
So \eqref{eq:cos-sub} implies 
\eqref{eq3-2} and the equality condition.
\end{proof}

As a consequence, the following two assertions hold:

\begin{corollary}\label{cor:Omega}
Suppose that $\theta\in I_j$.
Then $(u,\theta)\in 
\Omega_{\alpha_{0},...,\alpha_{2n-1}}$
if and only if $u>\cos(\theta-\beta_j)$.
\end{corollary}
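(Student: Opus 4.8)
The plan is to establish Corollary~\ref{cor:Omega} directly from the definition~\eqref{eq:omega2} of $\Omega_{\alpha_{0},\dots,\alpha_{2n-1}}$ together with Lemma~\ref{lem:interval}. Recall that by~\eqref{eq:omega2} (and setting aside the point $p_\infty$, which is handled separately), a pair $(u,\theta)$ with $\theta\in I_j$ lies in $\Omega_{\alpha_{0},\dots,\alpha_{2n-1}}$ if and only if
\[
   u > \max_{k=0,\dots,N-1}\cos(\theta-\beta_k),
\]
where I have rewritten the maximum over the distinct angular values $\alpha_{i_1},\dots,\alpha_{i_N}$ using the notation $\beta_0,\dots,\beta_{N-1}$ introduced in~\eqref{eq:beta-2n}; note the reindexed list $\{\beta_0,\dots,\beta_{N-1}\}$ is exactly the set of distinct angular values, so the two maxima agree.

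The key step is then to observe that, under the hypothesis $\theta\in I_j$, Lemma~\ref{lem:interval} tells us precisely that $\cos(\theta-\beta_j)\ge \cos(\theta-\beta_k)$ for every $k\ne j$, hence
\[
   \max_{k=0,\dots,N-1}\cos(\theta-\beta_k)=\cos(\theta-\beta_j).
\]
Substituting this into the inequality above gives that $(u,\theta)\in\Omega_{\alpha_{0},\dots,\alpha_{2n-1}}$ if and only if $u>\cos(\theta-\beta_j)$, which is the claim. The only mild subtlety is at the endpoints of $I_j$, namely $\theta=\gamma_{j-1}$ or $\theta=\gamma_{j}$, where Lemma~\ref{lem:interval} permits equality $\cos(\theta-\beta_j)=\cos(\theta-\beta_{j\mp1})$; but this does not affect the value of the maximum, since the maximum is still attained at $k=j$ and equals $\cos(\theta-\beta_j)$. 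So the strict inequality $u>\cos(\theta-\beta_j)$ remains the correct characterization on all of $I_j$, and this is consistent with the fact that the overlapping intervals $I_{j-1}$ and $I_j$ share the endpoint $\gamma_{j-1}$ while giving the same defining inequality there.

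I do not anticipate any serious obstacle here: the corollary is essentially a bookkeeping consequence of Lemma~\ref{lem:interval}, whose content is exactly that $\beta_j$ is the angular value "closest" to every $\theta\in I_j$ in the relevant sense. The one point that requires a little care — and is the natural place for a reader to get confused — is matching the two indexing conventions (the $\alpha_{i_j}$'s in~\eqref{eq:omega2} versus the $\beta_j$'s in the statement and in Lemma~\ref{lem:interval}), so I would make that identification explicit at the start of the proof. The point $p_\infty$ is excluded from the statement's scope since the condition "$\theta\in I_j$" already presupposes $(u,\theta)$ is an honest point of $\R\times\R/2\pi\Z$, so no separate discussion of it is needed.
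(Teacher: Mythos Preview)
Your proposal is correct and is exactly the argument the paper intends: the paper states this corollary as an immediate consequence of Lemma~\ref{lem:interval} without a separate proof, and your write-up spells out precisely that deduction (identifying the $\alpha_{i_j}$'s with the $\beta_k$'s and using Lemma~\ref{lem:interval} to locate the maximum in~\eqref{eq:omega2}).
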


\begin{corollary}\label{cor:Omega2}
If $(u,\theta)\in 
\Omega_{\alpha_{0},...,\alpha_{2n-1}}$
then
$$
u>
\min_{j=0,1,...,2n-1}
\left\{\cos\left(\frac{\alpha_{j+1}-\alpha_{j}}2\right)\right\}
\qquad
 (\alpha_{2n}:=2\pi).
$$
\end{corollary}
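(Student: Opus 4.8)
The plan is to deduce Corollary~\ref{cor:Omega2} directly from Corollary~\ref{cor:Omega} together with the elementary behaviour of $\cos$ on a short arc about the origin; beyond what is already packaged into Corollary~\ref{cor:Omega} and the piecewise description \eqref{eq:beta-interval}, Lemma~\ref{lem:interval} itself is not needed.

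First I would identify the right-hand side of the asserted inequality. Among the consecutive differences $\alpha_{j+1}-\alpha_j$ ($j=0,\dots,2n-1$, with $\alpha_{2n}:=2\pi$), the nonzero ones are precisely the gaps $\beta_{k+1}-\beta_k$ ($k=0,\dots,N-1$) between successive distinct angular values, as in \eqref{eq:beta-2n} (the last of them being $2\pi-\alpha_{2n-1}=\beta_N-\beta_{N-1}$), while a vanishing difference contributes $\cos 0=1$. Since $0<\beta_{k+1}-\beta_k\le 2\pi$ we have $\cos\tfrac12(\beta_{k+1}-\beta_k)\le 1$, so
\[
m:=\min_{0\le j\le 2n-1}\cos\tfrac12(\alpha_{j+1}-\alpha_j)
  =\min_{0\le k\le N-1}\cos\tfrac12(\beta_{k+1}-\beta_k).
\]

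Next, let $(u,\theta)\in\Omega_{\alpha_0,\dots,\alpha_{2n-1}}$ and choose $j\in\{0,\dots,N-1\}$ with $\theta\in I_j$. By Corollary~\ref{cor:Omega}, $u>\cos(\theta-\beta_j)$. By the definition of $I_j$ in \eqref{eq:beta-interval} --- for $j\ge 1$ directly, and for $j=0$ using $\gamma_0=\tfrac12(\beta_1-\beta_0)$ and $\gamma_{N-1}-2\pi=\tfrac12(\beta_{N-1}-\beta_N)$ --- the quantity $\theta-\beta_j$ ranges modulo $2\pi$ over the arc whose endpoints are $\tfrac12(\beta_{j-1}-\beta_j)\in(-\pi,0)$ and $\tfrac12(\beta_{j+1}-\beta_j)\in(0,\pi)$, indices read cyclically; in particular this arc lies in $(-\pi,\pi)$ and contains $0$. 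Since $\cos$ is increasing on $(-\pi,0]$ and decreasing on $[0,\pi)$, on such an arc it is bounded below by the smaller of its two endpoint values, whence
\[
u>\cos(\theta-\beta_j)\ge\min\!\left(\cos\tfrac12(\beta_j-\beta_{j-1}),\ \cos\tfrac12(\beta_{j+1}-\beta_j)\right)\ge m,
\]
which is the asserted inequality. (In the degenerate case $N=1$, $I_0$ is the whole circle and one instead uses $u>\cos(\theta-\beta_0)\ge -1=\cos\tfrac12(\beta_1-\beta_0)=m$ directly.)

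There is no genuine obstacle here; the only point demanding a little care is matching the wrap-around interval $I_0$ to the same endpoint description as the $I_j$ with $j\ge 1$, which is why I would record the identities for $\gamma_0$ and $\gamma_{N-1}-2\pi$ before invoking them. Everything else reduces to the one-variable fact that a cosine restricted to an arc through $0$ of length less than $2\pi$ attains its minimum at an endpoint, plus the bookkeeping that turns $\{\alpha_{j+1}-\alpha_j\}$ into $\{\beta_{k+1}-\beta_k\}$.
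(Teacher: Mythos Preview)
Your proposal is correct and follows essentially the same route as the paper: pick the interval $I_j$ containing $\theta$, use $u>\cos(\theta-\beta_j)$, and bound this cosine below by its value at the endpoints $\gamma_{j-1},\gamma_j$ of $I_j$, which are exactly $\cos\tfrac12(\beta_j-\beta_{j-1})$ and $\cos\tfrac12(\beta_{j+1}-\beta_j)$. The paper compresses all of this into a two-line ``without loss of generality $\theta\in I_1$'' argument; your version is simply more explicit about the $\alpha$-versus-$\beta$ bookkeeping, the wrap-around at $I_0$, and the degenerate case $N=1$.
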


\begin{proof}
Without loss of generality, we may assume that
$\theta\in I_1$.
Then $\cos(\theta-\beta_1)$ takes a minimum at 
$\theta=\gamma_0$ or $\theta=\gamma_{1}$.
So we have
\begin{align*}
u&>\min\left\{\cos(\gamma_0-\beta_1),\cos(\gamma_{1}-\beta_1)\right\} \\
&=
\min\left\{\cos\left(\frac{\beta_{1}-\beta_0}2\right),
\cos\left(\frac{\beta_{2}-\beta_{1}}2\right)\right\}
\ge
\min_{j=0,1,...,2n-1}
\left\{\cos\left(\frac{\alpha_{j+1}-\alpha_{j}}2\right)\right\}.
\end{align*}
\end{proof}

\begin{proof}[Proof of Theorem \ref{thm:domain}]
We set $2f_zdz=(\phi_0,\phi_1,\phi_2)$.
Then, it holds 
that
\begin{equation}\label{eq:phi}
   \phi_0 := - 2 g \omega,\quad
   \phi_1 := (1+g^2)\omega,\quad
   \phi_2 := \imag (1-g^2)\omega.
\end{equation}
Using \eqref{eq:maxfld},
we have that
\begin{equation}\label{eq:phi0}
\phi_k(=\hat \phi_kdz)
=\frac{p_k(z)}{q(z)}dz\qquad (k=0,1,2),
\end{equation}
where
\begin{align}
\label{eq:Q1}
q(z)&:=
\prod_{j=0}^{2n-1}
(e^{-\imag \alpha_j/2}z-e^{\imag \alpha_j/2}),\\
\label{eq:Q2}
p_0(z)&:=-2\imag \prod_{i=1}^{n-1}
\biggl(
(z-b_i)(1-\bar b_iz)
\biggr), \\
\label{eq:Q3}
p_k(z)&:=\imag^k 
\left(
\prod_{i=1}^{n-1}(1-\bar b_iz)^2
-(-1)^k\prod_{i=1}^{n-1}(z- b_i)^2
\right)\qquad (k=1,2).
\end{align}
Moreover, it holds that
\begin{equation}\label{eq:q}
|q(z)|^2=4^n r^{2n} \prod_{j=0}^{2n-1}
\biggl(u-\cos(\theta-\alpha_j)\biggr)^{2}
\qquad (u=\frac{r+r^{-1}}2),
\end{equation}
where $z=r e^{\imag \theta}$.
We now fix an index $k\in \{0,1,2\}$,
and set
$$
p(z):=p_k(z),\qquad \phi(z):=\phi_k(z).
$$
It holds that
\begin{equation}\label{eq:pq}
p(1/\bar z)=-\frac{\overline{p(z)}}{\bar z^{2n-2}},\qquad
q(1/\bar z)=\frac{\overline{q(z)}}{\bar z^{2n}}.
\end{equation}
Since $u=(r+r^{-1})/2$, we have
$
du=(r^2-1)dr/(2r^2)
$
and
$$
\phi=
\left(\frac{z p(z)}{q(z)}\frac{2r}{r^2-1}\right)du+
\frac{\imag z p(z)}{q(z)}d\theta.
$$
Thus, we can write
$$
\op{Re}(\phi)
=\frac{X(r,\theta)du
+\imag Y(r,\theta)d\theta}{2|q(z)|^2},
$$
where
$$
X(r,\theta):=
\frac{z p(z) 
\overline{q(z)}+\bar z q(z) \overline{p(z)}}{(r-r^{-1})/2},\quad
Y(r,\theta):=
z p(z) 
\overline{q(z)}-\bar z q(z) 
\overline{p(z)}.
$$
Using \eqref{eq:pq}, one can easily verify that
$$
X(1/r,\theta)=\frac{X(r,\theta)}{r^{4n}},\qquad
Y(1/r,\theta)=\frac{Y(r,\theta)}{r^{4n}}.
$$
By Proposition \ref{prop:A},
there exist two polynomials $x(u,\theta)$
and $y(u,\theta)$ in $u$ with parameter $\theta$
such that
$$
X(r,\theta)=r^{2n}x(u,\theta),\qquad
Y(r,\theta)=r^{2n}y(u,\theta).
$$
Hence we have the expression
$$
\op{Re}(\phi)
=\frac{x(u,\theta)du+y(u,\theta)d\theta}
{2\times 4^n \prod_{j=0}^{2n-1}
\biggl(u-\cos(\theta-\alpha_j)\biggr)^{2}}.
$$
By Lemma \ref{lem:interval}, the denominator of
$\op{Re}(\phi)$ does not vanish on
$\Omega_{\alpha_{0},....,\alpha_{2n-1}}$,
which proves the assertion.
\end{proof}

Thus, the three closed forms $\op{Re}(\phi_k)$ 
($k=0,1,2$) are well-defined on a simply 
connected $2$-manifold
$\Omega_{\alpha_{0},....,\alpha_{2n-1}}$, and
the following assertion immediately follows:

\begin{corollary}\label{prop:extf}
Let $f$ be a Kobayashi surface
associated to the Weierstrass data 
as in \eqref{eq:maxfld}.
Then 
$
\tilde f:=f\circ \iota:\hat\Omega_n\cup\{p_\infty\}\to \R^3_1
$ 
can be analytically extended to the domain
$\Omega_{\alpha_{0},....,\alpha_{2n-1}}$.
\end{corollary}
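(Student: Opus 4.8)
The plan is to deduce the corollary from Theorem~\ref{thm:domain} by the standard device of integrating an analytically extended closed $1$-form over a simply connected domain: once the \emph{differential} of $\tilde f$ is known to extend real-analytically across the circle $u=1$ and over all of $\Omega:=\Omega_{\alpha_0,\dots,\alpha_{2n-1}}$, the \emph{map} $\tilde f$ itself extends by antidifferentiation. Concretely, by Theorem~\ref{thm:domain} each of the three real-valued closed $1$-forms $\op{Re}(\phi_k)$ ($k=0,1,2$, with $\phi_k$ as in \eqref{eq:phi}) is real-analytic on all of $\Omega$, and, as noted just before the corollary, $\Omega$ is a simply connected analytic $2$-manifold --- it is the region lying above the closed curve $u=\max_j\cos(\theta-\alpha_{i_j})$ in the $(u,\theta)$-cylinder together with the point $p_\infty$ obtained by collapsing the $u\to\infty$ end, hence diffeomorphic to an open disk, with the analytic chart near $p_\infty$ being the one pulled back from a neighbourhood of $z=0$.

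First I would check that the domain of $\tilde f$ sits inside $\Omega$: this domain is $\hat\Omega_n\cup\{p_\infty\}$ with the finitely many points $(1,\alpha_{i_j})$ (which correspond to the ends $p_j$ of $f$, where $f$ itself is not defined) removed, and since $u\ge 1\ge\cos(\theta-\alpha_{i_j})$ with equality only at $u=1,\ \theta=\alpha_{i_j}$, the inclusion holds. Next I would fix a base point $P_0$ in this domain and define a map $h:\Omega\to\R^3_1$ by
$$
h(P):=\tilde f(P_0)+\int_{P_0}^P\bigl(\op{Re}\phi_0,\ \op{Re}\phi_1,\ \op{Re}\phi_2\bigr),
$$
the line integral being taken along any path in $\Omega$ from $P_0$ to $P$. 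Since the three forms are closed and $\Omega$ is simply connected, $h$ is well-defined and path-independent, and it is real-analytic because its differential $(\op{Re}\phi_0,\op{Re}\phi_1,\op{Re}\phi_2)$ is.

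Finally I would verify that $h$ restricts to $\tilde f$ on the domain of $\tilde f$. Pulling back the defining formula \eqref{eq:FL} of $f$ by $\iota$ yields $d\tilde f=\op{Re}(-2g,1+g^2,\imag(1-g^2))\omega=(\op{Re}\phi_0,\op{Re}\phi_1,\op{Re}\phi_2)$ on that domain; thus $h$ and $\tilde f$ have the same differential there, they agree at $P_0$, and the domain is connected, so $h=\tilde f$ on it. Hence $h$ is the required analytic extension.

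There is essentially no obstacle in this argument: all the genuine work --- that $\op{Re}(\phi_k)$ turns into a polynomial-in-$u$ numerator over $\prod_{j=0}^{2n-1}\bigl(u-\cos(\theta-\alpha_j)\bigr)^2$ under the change of variables $z=re^{\imag\theta}\mapsto(u,\theta)$, and that this denominator does not vanish on $\Omega$ (Lemma~\ref{lem:interval}) --- is already packaged in Theorem~\ref{thm:domain}. The only items deserving a sentence of care are the inclusion of the domain of $\tilde f$ into $\Omega$ (the ends must be excluded) and the analytic structure of $\Omega$ at $p_\infty$, both of which are routine and implicit in the statement of Theorem~\ref{thm:domain}.
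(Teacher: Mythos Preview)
Your proposal is correct and follows exactly the approach the paper intends: the paper simply remarks that the three closed forms $\op{Re}(\phi_k)$ are well-defined on the simply connected manifold $\Omega_{\alpha_0,\dots,\alpha_{2n-1}}$ and declares the corollary to follow immediately, and you have spelled out the implicit integration argument (Poincar\'e lemma on a simply connected domain, plus matching initial conditions) that justifies that claim. The extra care you take with the domain inclusion and the chart at $p_\infty$ is appropriate and does not diverge from the paper's line of reasoning.
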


From now on, we consider $\tilde f$ as a map defined on
$\Omega_{\alpha_{0},....,\alpha_{2n-1}}$. 

\begin{proposition}\label{prop;explicit}
Let $f$ be a Kobayashi surface of
principal type associated to the 
Weierstrass data as
in \eqref{eq:maxfld2}.
Suppose that 
$\alpha_0,...,\alpha_{2n-1}$ are all
distinct.
Then 
$\tilde f=(\tilde x_0,\tilde x_1,\tilde x_2)$
has the following expressions:
\begin{equation}\label{eq:map}
\begin{aligned}
\tilde x_0 &= -\sum_{j=0}^{2n-1}
             A_j
             \log\biggl(u-
              \cos(\theta-\alpha_j)\biggr),
            \\
x_1 &= \hphantom{-}\sum_{j=0}^{2n-1}
             A_j \biggl(\cos(n-1)\alpha_j\biggr ) 
             \log\biggl(u-
              \cos(\theta-\alpha_j)\biggr),\\
x_2 &= \hphantom{-}\sum_{j=0}^{2n-1}
             A_j \biggl (\sin(n-1)\alpha_j\biggr )
             \log\biggl(u-
              \cos(\theta-\alpha_j)\biggr),
\end{aligned}
\end{equation}
where
\begin{equation}\label{eq:Ak}
   A_j : = \frac{(-1)^{n+1}}{2^{2n-1}}
           \left(
	      \prod_{i\in \{0,...,2n-1\}\setminus\{j\}}
\sin\frac{\alpha_j-\alpha_i}{2}
	   \right)^{-1}\in \R\setminus\{0\}, 
\qquad (j=0,\dots,2n-1).
\end{equation}
\end{proposition}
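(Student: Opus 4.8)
The plan is to obtain the explicit formula \eqref{eq:map} by integrating the three closed one-forms $\op{Re}(\phi_k)$ computed in the proof of Theorem~\ref{thm:domain}. For a Kobayashi surface of principal type with $b_1=\cdots=b_{n-1}=0$, the Weierstrass data \eqref{eq:maxfld2} specializes the formulas \eqref{eq:phi0}--\eqref{eq:Q3} dramatically: the numerators become $p_0(z)=-2\imag z^{n-1}$, $p_1(z)=\imag(1-z^{2n-2})$, and $p_2(z)=-(1+z^{2n-2})$, while $q(z)=\prod_{j=0}^{2n-1}(e^{-\imag\alpha_j/2}z-e^{\imag\alpha_j/2})$. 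First I would carry out the partial-fraction decomposition of each $\phi_k/dz$ with respect to the simple poles $z=e^{\imag\alpha_j}$ of $q$ (these are simple precisely because the $\alpha_j$ are assumed distinct). Writing $c_j=e^{\imag\alpha_j/2}$, the residue of $1/q$ at $z=e^{\imag\alpha_j}$ is $\bigl(c_j\prod_{i\neq j}(\bar c_i e^{\imag\alpha_j}-c_i)\bigr)^{-1}=c_j^{-1}\prod_{i\neq j}\bigl(2\imag\, c_j^{-1}c_i^{-1}\sin\frac{\alpha_j-\alpha_i}{2}\bigr)^{-1}$, which up to a unimodular factor is exactly the real number $A_j$ of \eqref{eq:Ak}; the power of $2$ and the sign $(-1)^{n+1}$ come from the $2n-1$ factors of $2\imag$ and from collecting the phases $c_i^{-1}$.

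Next I would evaluate the numerators at the poles. Since $e^{\imag(2n-2)\alpha_j/2}=e^{\imag(n-1)\alpha_j}$, we get $p_0(e^{\imag\alpha_j})=-2\imag e^{\imag(n-1)\alpha_j}$, $p_1(e^{\imag\alpha_j})=\imag(1-e^{\imag 2(n-1)\alpha_j})$, $p_2(e^{\imag\alpha_j})=-(1+e^{\imag 2(n-1)\alpha_j})$. After multiplying by the residue of $1/q$ and taking real parts, the key identity I expect to need is that $\op{Re}\bigl(\phi_k\bigr)$ decomposes into a sum over $j$ of terms of the form $(\text{real coefficient})\times\op{Re}\bigl(\text{(residue)}\,dz/(z-e^{\imag\alpha_j})\bigr)$, and that
$$
\op{Re}\left(\frac{r_j\,dz}{z-e^{\imag\alpha_j}}\right)=r_j'\,d\log\bigl|z-e^{\imag\alpha_j}\bigr|
$$
when $r_j$ is real (or is real after absorbing the phase from the numerator), while $|z-e^{\imag\alpha_j}|^2=r^2-2r\cos(\theta-\alpha_j)+1=2r\bigl(u-\cos(\theta-\alpha_j)\bigr)$. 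The $2r$ and the extra $\log r$ terms must cancel after summation because $\sum_j A_j=0$ (and $\sum_j A_j\cos(n-1)\alpha_j$, $\sum_j A_j\sin(n-1)\alpha_j$ vanish too, by comparing leading coefficients in a partial-fraction identity — these are precisely the period/single-valuedness relations guaranteed by Proposition~\ref{prop:fold}). That leaves exactly $-\sum_j A_j\log\bigl(u-\cos(\theta-\alpha_j)\bigr)$ for $\tilde x_0$, and the analogous sums weighted by $\cos(n-1)\alpha_j$ and $\sin(n-1)\alpha_j$ for $x_1,x_2$, after identifying $\op{Re}$ and $\op{Im}$ of $e^{\imag(n-1)\alpha_j}$. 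The integration constants are fixed by the base point, which I would simply absorb (or choose $z_0$ so they vanish).

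The main obstacle I anticipate is bookkeeping of phases: showing that each summand $\op{Re}\bigl((\text{num}_j)(\text{res}_j)\,dz/(z-e^{\imag\alpha_j})\bigr)$ really does reduce to a \emph{real} multiple of $d\log|z-e^{\imag\alpha_j}|$ with the multiple being $A_j$ times $1$, $\cos(n-1)\alpha_j$, or $\sin(n-1)\alpha_j$ respectively — the cross terms involving $d\arg(z-e^{\imag\alpha_j})$ have to cancel, which again uses that the residues are real (equivalently, fold-type condition (iii)) and that the surface is single-valued. A clean way to organize this is to note $\op{Re}(\phi_k)=\frac12(\phi_k+\overline{\phi_k})$ and use the symmetry \eqref{eq:pq} to pair the pole at $e^{\imag\alpha_j}$ on $|z|<1$ with its reflection, so that the $d\theta$-parts combine into an exact form $d\log|z-e^{\imag\alpha_j}|^{2}$ with no angular residue surviving. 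Once the phase accounting is done, verifying \eqref{eq:Ak} is a short computation using the product formula $e^{-\imag\alpha_j/2}z-e^{\imag\alpha_j/2}=e^{\imag\alpha_j/2}(e^{-\imag\alpha_j}z-1)$ and the standard Lagrange-interpolation value of $\prod_{i\neq j}(e^{\imag\alpha_j}-e^{\imag\alpha_i})=(2\imag)^{2n-1}e^{\imag(2n-1)\alpha_j/2}\bigl(\prod_{i\neq j}e^{-\imag\alpha_i/2}\bigr)\prod_{i\neq j}\sin\frac{\alpha_j-\alpha_i}{2}$, after which the unimodular factors are forced to combine into $(-1)^{n+1}$ since $A_j$ is known a priori to be real. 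Finally, by Corollary~\ref{prop:extf} this expression, valid a priori on $\hat\Omega_n$, is the analytic extension to all of $\Omega_{\alpha_0,\dots,\alpha_{2n-1}}$, since $u-\cos(\theta-\alpha_j)>0$ there by Lemma~\ref{lem:interval}.
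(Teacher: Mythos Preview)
Your approach is correct and essentially identical to the paper's: partial-fraction decompose each $\phi_k$ over the simple poles $e^{\imag\alpha_j}$, integrate to obtain $\log|z-e^{\imag\alpha_j}|^2=\log(2r)+\log\bigl(u-\cos(\theta-\alpha_j)\bigr)$, and eliminate the $\log(2r)$ term via the residue-sum identities $\sum_j A_j=\sum_j A_j\cos(n-1)\alpha_j=\sum_j A_j\sin(n-1)\alpha_j=0$. Your anticipated ``phase bookkeeping'' obstacle dissolves once you actually compute the residues and see they are already real---namely $B_{0,j}=-2A_j$, $B_{1,j}=2A_j\cos(n-1)\alpha_j$, $B_{2,j}=2A_j\sin(n-1)\alpha_j$---so that $\Re\int B_{k,j}\,dz/(z-e^{\imag\alpha_j})=B_{k,j}\log|z-e^{\imag\alpha_j}|$ with no $d\arg$ contribution to track (incidentally, your $p_1$ and $p_2$ are swapped: from \eqref{eq:Q3} with $b_i=0$ one gets $p_1=\imag(1+z^{2n-2})$ and $p_2=-(1-z^{2n-2})$).
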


\begin{proof}
We use the expression \eqref{eq:phi}.
Since $\alpha_0,...,\alpha_{2n-1}$ are all
distinct, we have the following
partial fractional decomposition:
\begin{equation}\label{eq:phi2}
  \phi_k =  \sum_{j=0}^{2n-1}
  \frac{B_{k,j}}{z-e^{\imag\alpha_j}}\,dz
\qquad (k=0,1,2),
\end{equation}
where $B_{k,j}\in \R$ ($j=0,...,2n-1$) 
is the residue of 
the meromorphic $1$-form $\phi_k$
on $S^2$
at $z=e^{\imag\alpha_j}$.
By a straightforward calculation,
we have 
$$
B_{0,j}=-2A_j,\quad B_{1,j}=2A_j \cos (n-1)\alpha_j,
\quad
B_{2,j}=2A_j \sin (n-1)\alpha_j.
$$
The residue formula yields that
\begin{equation}\label{eq:residue}
  \sum_{j=0}^{2n-1} A_j =
  \sum_{j=0}^{2n-1} A_j\cos(n-1)\alpha_j =
  \sum_{j=0}^{2n-1} A_j\sin(n-1)\alpha_j =0.
\end{equation}
By integrating 
\eqref{eq:phi2}, we get
\begin{align*}
  x_0 &=\Re\int \phi_0 
       = -2 \Re \sum_{j=0}^{2n-1}A_j \log(z-e^{\imag\alpha_j})
       = - \sum_{j=0}^{2n-1}A_j \log |z-e^{\imag\alpha_j}|^2\\
      &= - \sum_{j=0}^{2n-1}A_j \log 
            \left[2 r\left(\frac{1}{2}\left(r+\frac{1}{r}\right)-
  \cos(\theta-\alpha_j)\right)\right]\\
      &= - \sum_{j=0}^{2n-1}A_j \log 
          \left(\frac{1}{2}\left(r+\frac{1}{r}\right)-  
            \cos(\theta-\alpha_j)\right)
         - \log (2 r) \sum_{j=0}^{2n-1}A_j \\
      &= - \sum_{j=0}^{2n-1}A_j \log 
           \left(\frac{1}{2}\left(r+\frac{1}{r}\right)-  
            \cos(\theta-\alpha_j)\right).
 \end{align*}
Similarly, we have
 \begin{align*}
  x_1 & = 2 \Re\sum_{j=0}^{2n-1}A_j 
\biggl(\cos(n-1)  \alpha_j\biggr) \log (z-e^{\imag\alpha_j})\\
      &=  \sum_{j=0}^{2n-1}A_j 
\biggl(\cos(n-1)\alpha_j\biggr)\log 
           \left(\frac{1}{2}\left(r+\frac{1}{r}\right)-  
            \cos(\theta-\alpha_j)\right),\\
  x_2       &=  \sum_{j=0}^{2n-1}A_j 
\biggl(\sin(n-1)\alpha_j\biggr)\log 
           \left(\frac{1}{2}\left(r+\frac{1}{r}\right)-  
            \cos(\theta-\alpha_j)\right).
 \end{align*}
Since $u=(r+r^{-1})/2$, we get the assertion.
\end{proof}

\section{Entire graphs induced by 
Kobayashi surface}\label{sec3}

Let $f$ be a Kobayashi surface
associated to the Weierstrass data 
as in \eqref{eq:maxfld}.
We denote by 
$$
\tilde f=(\tilde x_0,\tilde x_1,\tilde x_2)
:\Omega_{\alpha_{0},....,\alpha_{2n-1}}\to \R^3_1
$$ 
its analytic extension.
We firstly consider the principal case:

\begin{lemma}\label{lem:first}
\red{Let $\tilde f=(\tilde x_1,\tilde x_2,\tilde x_3)$ be the 
analytic extension of the 
Kobayashi surface $f$ of
principal type associated to the 
Weierstrass data as in \eqref{eq:maxfld2}.
Then the map
$
(\tilde x_1,\tilde x_2):
\Omega_{\alpha_{0},....,\alpha_{2n-1}}
\to \R^2
$
is an immersion
if and only if
\begin{equation}\label{cond:1}
|\alpha_j-\alpha_{j+1}|\le \frac{\pi}{n-1}
\qquad (j=0,...,2n-1),
\end{equation}
where $\alpha_{2n}:=2\pi$ and
$\alpha_0,...,\alpha_{2n-1}$ are not necessarily distinct. }
\end{lemma}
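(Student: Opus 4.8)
The plan is to reduce the immersion property to the non‑vanishing of a single explicit scalar function of the one variable $u$, by computing it on the easy region $\{u>1\}$ and propagating the answer by analyticity. Write $w:=\tilde x_1+\imag\tilde x_2$ and $\Lambda:=(d\tilde x_1\wedge d\tilde x_2)/(du\wedge d\theta)$. Since $d\tilde x_1\wedge d\tilde x_2=\tfrac{\imag}{2}\,dw\wedge d\bar w$, the map $(\tilde x_1,\tilde x_2)$ is an immersion exactly when $d\tilde x_1\wedge d\tilde x_2$ is nowhere zero, i.e.\ (on $\Omega_{\alpha_0,\dots,\alpha_{2n-1}}$ minus the single point $p_\infty$, where $(u,\theta)$ is a genuine coordinate) when $\Lambda$ is nowhere zero. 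By Corollary~\ref{prop:extf} the extension $\tilde f$, hence $w$, is real analytic on the connected set $\Omega_{\alpha_0,\dots,\alpha_{2n-1}}$, so $\Lambda$ is real analytic off $p_\infty$ and, by the identity theorem, is determined by its restriction to $\{u>1\}$, which under $\iota$ is the punctured disk $\{0<|z|<1\}$.

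First I would compute $\Lambda$ on that disk. There $d\tilde x_1=\Re\phi_1$, $d\tilde x_2=\Re\phi_2$ with $\phi_1,\phi_2$ as in \eqref{eq:phi}, and the standard Weierstrass computation gives $d\tilde x_1\wedge d\tilde x_2=(|g|^4-1)\,|h|^2\,dx\wedge dy$, where $z=x+\imag y$ and $\omega=h\,dz$. For the principal‑type data \eqref{eq:maxfld2} one has $|g|=r^{\,n-1}$, while $|h|^2=\bigl(4^n r^{2n}\prod_{j=0}^{2n-1}(u-\cos(\theta-\alpha_j))\bigr)^{-1}$ because $|z-e^{\imag\alpha_j}|^2=2r(u-\cos(\theta-\alpha_j))$, and $dx\wedge dy=\tfrac{2r^3}{r^2-1}\,du\wedge d\theta$. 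Multiplying these out, the resulting function of $r$ is $r^{3-2n}(r^{4n-4}-1)/(r^2-1)=\sum_k(r^k+r^{-k})$ over odd $k$ with $1\le k\le 2n-3$; writing $r=e^{-s}$ so that $u=\cosh s$, this equals $\sinh\bigl((2n-2)s\bigr)/\sinh s=U_{2n-3}(u)$, where $U_m$ is the Chebyshev polynomial of the second kind, $U_m(\cos\chi)\sin\chi=\sin\bigl((m+1)\chi\bigr)$. Hence
\[
\Lambda=\frac{U_{2n-3}(u)}{2^{2n-1}\prod_{j=0}^{2n-1}\bigl(u-\cos(\theta-\alpha_j)\bigr)}\qquad\text{on }\{u>1\}.
\]
Note that no distinctness of the $\alpha_j$ is used, so the non‑distinct case is included with no extra argument.

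The right‑hand side is real analytic on all of $\Omega_{\alpha_0,\dots,\alpha_{2n-1}}\setminus\{p_\infty\}$, since by Corollary~\ref{cor:Omega} its denominator is positive there; by the identity theorem the formula holds everywhere off $p_\infty$. (At $p_\infty$, i.e.\ $z=0$, one has $|g|=0$ and $\omega(0)\ne0$, so $d\tilde x_1\wedge d\tilde x_2\ne0$ there anyway — this is the one point where it matters that ``immersion'' refers to the $2$‑form and not to the coefficient $\Lambda$, since $(u,\theta)$ degenerates at $p_\infty$.) Therefore $(\tilde x_1,\tilde x_2)$ is an immersion if and only if $U_{2n-3}(u)\ne0$ for every $u$ occurring as a first coordinate of a point of $\Omega_{\alpha_0,\dots,\alpha_{2n-1}}$. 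That set of $u$'s is $\bigl(\min_\theta\max_j\cos(\theta-\alpha_j),\ \infty\bigr)$; the minimum is attained at the midpoint of the widest arc between consecutive points $e^{\imag\alpha_j}$ on $S^1$, of angular width $\delta:=\max_j|\alpha_{j+1}-\alpha_j|$ with $\alpha_{2n}:=2\pi$, and equals $\cos(\delta/2)$. Since the roots of $U_{2n-3}$ are $\cos\frac{k\pi}{2(n-1)}$ for $k=1,\dots,2n-3$, all lying in $(-1,1)$ with largest value $\cos\frac{\pi}{2(n-1)}$, we conclude that $U_{2n-3}$ is zero‑free on $\bigl(\cos(\delta/2),\infty\bigr)$ if and only if $\cos(\delta/2)\ge\cos\frac{\pi}{2(n-1)}$, i.e.\ $\delta\le\frac{\pi}{n-1}$, which is precisely \eqref{cond:1}.

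The step I expect to be the genuine work is the explicit computation on $\{u>1\}$, in particular recognizing the Laurent polynomial in $r$ as $U_{2n-3}(u)$ (via the telescoping identity $2\sinh s\cdot\cosh(ks)=\sinh((k+1)s)-\sinh((k-1)s)$); everything else is soft. Conceptually, the Chebyshev polynomial appears because on the time‑like sheet $\{u<1\}$ one may pass to null coordinates $\xi=\theta+\psi$, $\eta=\theta-\psi$ with $u=\cos\psi$, in which $w$ splits as $\Xi(\xi)+\Xi(\eta)$ with $\Xi'(\xi)$ equal to a real scalar times $e^{\imag(n-1)\xi}$; then $dw\wedge d\bar w$ is proportional to $\sin\bigl((n-1)(\xi-\eta)\bigr)=\sin\bigl(2(n-1)\psi\bigr)$, and dividing by the Jacobian $\sin\psi$ of $u=\cos\psi$ produces $U_{2n-3}(u)$. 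The analytic‑continuation route described above is preferable because it avoids having to justify that continuation across the light‑like locus $\{u=1\}$ in detail.
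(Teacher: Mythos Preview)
Your proof is correct and takes essentially the same route as the paper: both express $\partial(\tilde x_1,\tilde x_2)/\partial(u,\theta)$ as $U_{2n-3}(u)$ divided by a positive product of the factors $u-\cos(\theta-\alpha_j)$, and then reduce the immersion question to whether $U_{2n-3}$ vanishes on the $u$-range of $\Omega_{\alpha_0,\dots,\alpha_{2n-1}}$. The only cosmetic differences are that you reach the Jacobian via the area form $(|g|^4-1)|\omega|^2$ rather than the determinant identity \eqref{eq:jacobian}, and you handle the biconditional uniformly by identifying the $u$-range as $(\cos(\delta/2),\infty)$ with $\delta=\max_j|\alpha_{j+1}-\alpha_j|$ and comparing with the largest Chebyshev root $\cos\frac{\pi}{2(n-1)}$, whereas the paper splits into the ``if'' direction (via Corollary~\ref{cor:Omega2} and monotonicity of $U_{2n-3}$) and the ``only if'' direction (by exhibiting an explicit point $(u_0,\theta_0)=(\cos\frac{\pi}{2(n-1)},\alpha_1/2)$ where the Jacobian vanishes).
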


\begin{proof}
We can use the technique given in
\cite{FKKRUY}, that is, we use the identity
 \begin{equation}\label{eq:jacobian}
    \frac{\partial(\tilde x_i,\tilde x_j)}{\partial (u,\theta)}
    = \frac{-\imag r^3}{r^2-1}
      \begin{vmatrix}
       \hat \phi_i & \overline{\hat \phi_i}\\
       \hat \phi_j & \overline{\hat \phi_j}
      \end{vmatrix},
 \end{equation}
where $\phi_k=\hat
\phi_k \,dz$ ($k=0,1,2$)．
On the $z$-plane, it holds that 
 \begin{align*}
  &\frac{\partial(\tilde x_1,\tilde x_2)}{\partial(u,\theta)}
  =\frac{-\imag r^3}{r^2-1}\cdot
    \begin{vmatrix}
     \hat \phi_1 & \overline{\hat \phi_1} \\
     \hat \phi_2 & \overline{\hat \phi_2}
    \end{vmatrix}=
   \frac{-\imag r^3}{r^2-1}\cdot
\frac{    \begin{vmatrix}
       \imag \Lambda(1+z^{2n-2}) & -\imag \bar \Lambda
(1+\bar z^{2n-2}) \\
       -\Lambda(1-z^{2n-2}) & -\bar \Lambda(1-\bar z^{2n-2})
    \end{vmatrix}}{%
         \left|\displaystyle\prod_{j=0}^{2n-1}
(z-e^{\imag\alpha_j})^2\right|}
\\
   \phantom{aaa}&= \frac{-r^3}{r^2-1}\cdot
      \frac{2-2 |z|^{4 n-4} }{%
              2^{2n} r^{2n}
       \displaystyle\prod_{j=0}^{2n-1}
        \biggl(u-\cos(\theta-\alpha_j)\biggr)^{2}}
=\frac{U_{2n-3}(u)}{
     2^{2n-1}
       \displaystyle\prod_{j=0}^{2n-1}
        \biggl(u-\cos(\theta-\alpha_j)\biggr)^{2}},
\end{align*}
where $u:=(r+r^{-1})/2$, $\Lambda$ is as in
\eqref{eq:beta},
and
$U_{2n-3}(u)$ is the Chebyshev polynomial of
the second kind of degree $2n-3$, and
we have used the second identity of
\eqref{eq:id}.
By the real analyticity of $\tilde x_1,\tilde x_2$,
the above identity holds on 
$\Omega_{\alpha_{0},....,\alpha_{2n-1}}$.
By \eqref{cond:1} and Corollary \ref{cor:Omega2}, we have 
 \[
     u > \min_{j=0,...,2n-1} \cos\frac{\alpha_{j+1}-\alpha_{j}}{2}
       \ge \cos \frac{\pi}{2(n-1)}
       >\cos \frac{\pi}{2n-3}.
 \]
By \cite[Proposition A.3]{FKKRUY},
$U_{2n-3}(u)$ is monotone increasing
on $[\cos \frac{\pi}{2n-3},\infty)$
and 
\[
U_{2n-3}(u)> U_{2n-3}\left(\cos \frac{\pi}{2(n-1)}\right)=0,
 \]
which implies that
$
\partial(\tilde x_1,\tilde x_2)/{\partial(u,\theta)} \neq 0
$
on $\Omega_{\alpha_{0},....,\alpha_{2n-1}}$,
and we can conclude that
$(u,\theta)\mapsto (\tilde x_1,\tilde x_2)$ 
is an immersion.

\red{On the other hand,
if $(\alpha_0,...,\alpha_{2n-1})$
does not satisfy the condition
\eqref{cond:1}}, then we may set
$\alpha_0=0$ \red{(cf. Remark \ref{eq:rotation})}
and $\alpha_1>{\pi}/(n-1)$.
We set
$$
u_0:=\cos \frac{\pi}{2(n-1)},\qquad
\theta_0:=\frac{\alpha_1}2.
$$
By Corollary \ref{cor:Omega},
we have
$(u_0,\theta_0)\in \Omega_{\alpha_0,...,\alpha_{2n-1}}$.
Since $U_{2n-3}(u_0)=0$, we have
$$
   \left. \frac{\partial(\tilde x_1,\tilde x_2)}
{\partial (u,\theta)}\right|_{(u,\theta)=(u_0,\theta_0)}=0.
$$
In particular, the induced map
$(\tilde x_1,\tilde x_2)$ of the Kobayashi surface
$f$ is not an immersion at $(u_0,\theta_0)$.
So the condition \eqref{cond:1} is sharp.
\end{proof}

\begin{corollary}
Let $f$ be a Kobayashi surface 
of general type associated 
to the Weierstrass data as
in \eqref{eq:maxfld}.
Suppose that the inequality
\eqref{cond:1} holds. 
If $b_1,...,b_{n-1}$ are sufficiently close to $0$,
then the map
$
(\tilde x_1,\tilde x_2):
\Omega_{\alpha_{0},....,\alpha_{2n-1}}
\to \R^2
$
is an immersion.
\end{corollary}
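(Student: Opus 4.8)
The plan is to deduce this from Lemma~\ref{lem:first} by a continuity argument in the parameters $b=(b_1,\dots,b_{n-1})$ near $b=0$, the value $b=0$ being precisely the principal-type case already handled by that lemma (indeed \eqref{eq:maxfld} with $b_1=\dots=b_{n-1}=0$ is exactly \eqref{eq:maxfld2}). First I would redo the computation in the proof of Lemma~\ref{lem:first} with the general data \eqref{eq:maxfld}: applying the Jacobian identity \eqref{eq:jacobian} with $(i,j)=(1,2)$, passing to $z=re^{\imag\theta}$, $u=(r+r^{-1})/2$, and extracting the power of $r$ that makes the numerator invariant under $r\mapsto 1/r$ (as in the proof of Theorem~\ref{thm:domain}; cf.\ Proposition~\ref{prop:A}), one gets
\[
   \frac{\partial(\tilde x_1,\tilde x_2)}{\partial(u,\theta)}
   =\frac{\mathcal N_b(u,\theta)}
          {2^{2n-1}\displaystyle\prod_{j=0}^{2n-1}\bigl(u-\cos(\theta-\alpha_j)\bigr)^{2}},
\]
where, for each fixed $\theta$, $\mathcal N_b(u,\theta)$ is a polynomial in $u$ whose coefficients are trigonometric polynomials in $\theta$ depending real-analytically (in particular continuously) on $(b_1,\overline{b_1},\dots,b_{n-1},\overline{b_{n-1}})$, and $\mathcal N_0(u,\theta)=U_{2n-3}(u)$ by Lemma~\ref{lem:first}. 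The computation is made tractable by the Blaschke identity $|1-\overline{b_i}z|^2-|z-b_i|^2=(1-|z|^2)(1-|b_i|^2)$, which exhibits the correction to the $b=0$ case as divisible by $1-|z|^2$ and hence well-behaved under the substitution and the analytic continuation of Theorem~\ref{thm:domain}. Since the denominator is positive on $\Omega_{\alpha_0,\dots,\alpha_{2n-1}}$ by Lemma~\ref{lem:interval}, it suffices to prove that $\mathcal N_b>0$ on $\Omega_{\alpha_0,\dots,\alpha_{2n-1}}$ once $b$ is close enough to $0$.

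By \eqref{cond:1} and Corollary~\ref{cor:Omega2}, every $(u,\theta)\in\Omega_{\alpha_0,\dots,\alpha_{2n-1}}$ satisfies $u>\cos\frac{\pi}{2(n-1)}=:u_*$; since $u_*$ is the largest zero of $U_{2n-3}$ and $U_{2n-3}$ is increasing on $[u_*,\infty)$ (see \cite[Proposition~A.3]{FKKRUY}), $\mathcal N_0=U_{2n-3}(u)>0$ on $\Omega_{\alpha_0,\dots,\alpha_{2n-1}}$. I would then split $\Omega_{\alpha_0,\dots,\alpha_{2n-1}}$ according to its three sources of non-compactness. (i) On $\{u>u_1\}$ with $u_1$ large (a neighbourhood of $p_\infty$), the leading coefficient in $u$ of $\mathcal N_b$ stays close to the positive leading coefficient of $U_{2n-3}$ while the lower-order coefficients stay bounded, uniformly in $\theta\in\R/2\pi\Z$ and in $b$ near $0$, so $\mathcal N_b>0$ there; at the single point $p_\infty$ (where $(u,\theta)$ are not coordinates) one checks immersivity directly in the coordinate $z$ near $z=0$ using continuity in $b$ from the $b=0$ case. (ii) On $\{u_*+\eta\le u\le u_1\}$ the closure in the $(u,\theta)$-plane is compact (the $\theta$-circle being compact) and $\mathcal N_0\ge U_{2n-3}(u_*+\eta)>0$ there, so uniform continuity in $b$ gives $\mathcal N_b>0$ for $b$ close to $0$. (iii) This leaves $\{u<u_*+\eta\}$, which for small $\eta$ consists of small ``wedges'' of $\Omega_{\alpha_0,\dots,\alpha_{2n-1}}$ sitting over the boundary points $(\gamma_j,u_*)$ at which some gap $\alpha_{i_{j+1}}-\alpha_{i_j}$ in the angular data equals exactly $\frac{\pi}{n-1}$.

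I expect piece (iii) to be the main obstacle. There $U_{2n-3}(u_*)=0$, so $\mathcal N_0$ is not bounded away from $0$ and the crude perturbation estimate fails; geometrically the corner corresponds to an end $e^{\imag\alpha_{i_{j+1}}}\in S^1$, and the dominant logarithmic part of $\tilde f$ at that point has a rank-deficient $(\tilde x_1,\tilde x_2)$-component (in the principal-type model of Proposition~\ref{prop;explicit} the relevant $2\times2$ determinant is a multiple of $\sin\bigl((n-1)(\alpha_{i_{j+1}}-\alpha_{i_j})\bigr)=\sin\pi=0$), which is exactly why the Jacobian degenerates. The plan for this piece is to use the factorization $U_{2n-3}=2\,U_{n-2}\,T_{n-1}$, with $T_{n-1}$ the Chebyshev polynomial of the first kind of degree $n-1$: one writes $\mathcal N_b=g_b(u,\theta)\,h_b(u,\theta)$ with $g_0=U_{n-2}$ and $h_0=2T_{n-1}$, where the zero of $\mathcal N_0$ at $u_*$ lies entirely in the factor $h_b$ — the analytic continuation of $\prod_i|1-\overline{b_i}z|^2+\prod_i|z-b_i|^2$, a sum of nonnegative terms — while $g_b$ stays bounded away from $0$ on $\Omega_{\alpha_0,\dots,\alpha_{2n-1}}$ since the largest zero of $g_0=U_{n-2}$ is $\cos\frac{\pi}{n-1}<u_*$; one then controls the sign of $h_b$ on the wedge over $(\gamma_j,u_*)$ by a local expansion of $\tilde f$ at the end, using that $h_b>0$ on $\{u\ge1\}$ and $h_b(u_*,\gamma_j)\to0$ as $b\to0$, with estimates uniform in $b$ near $0$. (If one is content with \emph{strict} inequality in \eqref{cond:1}, piece (iii) is empty, because then the largest gap is $<\frac{\pi}{n-1}$ and Corollary~\ref{cor:Omega2} forces $u$ to be bounded away from $u_*$ on $\Omega_{\alpha_0,\dots,\alpha_{2n-1}}$, so pieces (i)--(ii) already conclude.) Once $(\tilde x_1,\tilde x_2)$ is shown to be an immersion on all of $\Omega_{\alpha_0,\dots,\alpha_{2n-1}}$, the Corollary follows.
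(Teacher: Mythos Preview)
Your overall strategy matches the paper's: compute $\partial(\tilde x_1,\tilde x_2)/\partial(u,\theta)$ via \eqref{eq:jacobian}, reduce its numerator to a polynomial in $u$ depending continuously on $b$, identify the $b=0$ case with $U_{2n-3}$, and conclude by continuity. The paper carries this out far more tersely than you do. It computes the numerator directly as
\[
  X(r,\theta)=\prod_{i=1}^{n-1}|1-\overline{b_i}z|^4-\prod_{i=1}^{n-1}|z-b_i|^4,
\]
checks $X(1/r,\theta)=-r^{-(4n-4)}X(r,\theta)$, applies Proposition~\ref{prop:A} to write $X=r^{2n-2}\bigl(\frac{r-r^{-1}}{2}\bigr)Y(u,\theta)$ with $Y$ of degree $2n-3$ in $u$, and then simply invokes \emph{continuity of the roots} of $Y(\,\cdot\,,\theta)=0$ in $b$: since $Y$ has the same $u$-degree as $U_{2n-3}$ and reduces to it at $b=0$, its roots stay out of $\Omega_{\alpha_0,\dots,\alpha_{2n-1}}$ for $b$ small. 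There is no three-part splitting of $\Omega$ and no Chebyshev factorization $U_{2n-3}=2U_{n-2}T_{n-1}$.

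Your pieces (i)--(ii) make the continuity argument honest on the non-compact domain, which is more than the paper writes down but in the same spirit. Your piece (iii), the equality case in \eqref{cond:1}, is where you go beyond the paper, and your argument there is incomplete. The factorization $Y_b=G_bH_b$ is valid (it comes from $X=X^-X^+$ with $X^\pm=\prod|1-\overline{b_i}z|^2\pm\prod|z-b_i|^2$, anti-self-reciprocal and self-reciprocal of order $2n-2$ respectively), and $G_b$ does stay bounded away from $0$ on $\Omega$ for small $b$ since the largest root of $G_0\propto U_{n-2}$ is $\cos\frac{\pi}{n-1}<u_*$. But you do not actually prove $H_b>0$ on the wedges: the observations that $h_b>0$ on $\{u\ge1\}$ and $h_b(u_*,\gamma_j)\to0$ as $b\to0$ do not preclude the simple root of $H_0\propto T_{n-1}$ at $u_*$ from moving \emph{upward} into $\Omega$ under perturbation, and the promised ``local expansion at the end'' is not supplied. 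The paper's one-line continuity-of-roots argument is equally silent on this boundary case. Note, though, that the downstream uses of this Corollary (Corollary~\ref{cor:proper} and Theorem~\ref{thm:gen}) assume the strict inequality \eqref{cond:2}, for which piece (iii) is empty and both your argument and the paper's are complete.
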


\begin{proof}
Applying the same technique as in
the proof of Lemma \ref{lem:first},
we have that
\begin{align*}
\frac{\partial(\tilde x_1,\tilde x_2)}{\partial(u,\theta)}
&=
-4^{-n}\frac{X(r,\theta)/r^{2n-2}}{(r-r^{-1})/2}{\displaystyle
\prod_{j=0}^{2n-1}
\left(u-\cos(\theta-\alpha_j)\right)^{-2}},
\end{align*}
where 
we used the equations
\eqref{eq:jacobian}, \eqref{eq:phi0}, \eqref{eq:Q3}
and \eqref{eq:q}, 
and

$$
X(r,\theta)=
\prod_{i=1}^{n-1}|\bar b_iz-1|^4
-\prod_{i=1}^{n-1}|z- b_i|^4
$$
can be considered as a polynomial in $r$
with parameter $\theta$
of degree $4(n-1)$, since $b_1,...,b_{n-1}$ lie in
the unit disk.
Moreover, it holds that
$$
X(1/r,\theta)=-\frac{1}{r^{4n-4}}X(r,\theta).
$$
Hence, $X$ is an anti-self-reciprocal 
polynomial in $r$ of degree $4n-4$.
By Proposition \ref{prop:A} in the appendix,
there exists a polynomial $Y(u,\theta)$
in $u$ of degree $2n-3$ such that
$$
X(r,\theta)=r^{2n-2}\left(
\frac{r-r^{-1}}{2}\right)Y(u,\theta).
$$
Thus, we get the identity
$$
  \frac{\partial(\tilde x_1,\tilde x_2)}{\partial(u,\theta)}
=4^{-n}Y(u,\theta)\displaystyle\prod_{j=0}^{2n-1}
        \left(u-\cos(\theta-\alpha_j)\right)^{-2}.
$$
Since the degree of $Y(u,\theta)$ is the same 
as that of $U_{2n-3}(u)$.
By the continuity of the roots of 
the equation $Y(u,\theta)=0$
with respect to the parameters $b_1,...,b_{n-1}$,
we can conclude that $(\tilde x_1,\tilde x_2)$
is an immersion on 
$\Omega_{\alpha_{0},....,\alpha_{2n-1}}$.
\end{proof}

We next discuss the properness of
the map $(\tilde x_1,\tilde x_2)$:

\begin{proposition}\label{thm:proper}
Let $f$ be a Kobayashi surface 
of principal type
associated to the Weierstrass data as
in \eqref{eq:maxfld2}.
Suppose that the inequality
\eqref{cond:1} holds
and also that $\alpha_0,...,\alpha_{2n-1}$ are
distinct.
Then
$
(\tilde x_1,\tilde x_2):
\Omega_{\alpha_0,...,\alpha_{2n-1}}\to \R^2
$
is a proper immersion.
\end{proposition}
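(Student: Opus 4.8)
The plan is to deduce the statement from Lemma~\ref{lem:first} together with the explicit formula of Proposition~\ref{prop;explicit}. Since \eqref{cond:1} holds, Lemma~\ref{lem:first} already gives that $(\tilde x_1,\tilde x_2)$ is an immersion, so the whole point is to prove properness. Properness is equivalent to the assertion that $|(\tilde x_1,\tilde x_2)(u_k,\theta_k)|\to\infty$ whenever a sequence $(u_k,\theta_k)$ leaves every compact subset of $\Omega:=\Omega_{\alpha_0,\dots,\alpha_{2n-1}}$. The added point $p_\infty$ lies in $\Omega$ and $\tilde f$ is analytic there (Corollary~\ref{prop:extf}); moreover, as $u\to\infty$ the residue relation \eqref{eq:residue}, which gives $\sum_j A_j=0$, makes each term in \eqref{eq:map} tend to a finite limit, so $p_\infty$ is not a place where a sequence can run off to infinity. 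Hence a sequence leaving every compact subset of $\Omega$ must, after passing to a subsequence, converge to a point $(u_\infty,\theta_\infty)$ of the boundary curve $\{u=\max_j\cos(\theta-\alpha_j)\}$, and it suffices to show $|(\tilde x_1,\tilde x_2)|\to\infty$ along any sequence in $\Omega$ approaching such a boundary point.

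Next I would write $L_j(u,\theta):=\log\bigl(u-\cos(\theta-\alpha_j)\bigr)$ and $v_j:=A_j\bigl(\cos (n-1)\alpha_j,\ \sin (n-1)\alpha_j\bigr)\in\R^2$; since $A_j\neq 0$, each $v_j$ is a nonzero vector, and \eqref{eq:map} reads $(\tilde x_1,\tilde x_2)=\sum_{j=0}^{2n-1}L_j\,v_j$ (up to an additive constant). Given a boundary point $(u_\infty,\theta_\infty)$, Lemma~\ref{lem:interval} (equivalently Corollary~\ref{cor:Omega}) shows that either $\theta_\infty$ lies in the interior of one of the arcs $I_j$, so a unique index $j$ attains the maximum and, as $(u,\theta)\to(u_\infty,\theta_\infty)$ in $\Omega$, $L_j\to-\infty$ while all other $L_i$ stay bounded; or $\theta_\infty=\gamma_j$ is a corner value, so exactly two consecutive indices $j,j+1$ (mod $2n$) attain the maximum, $L_j,L_{j+1}\to-\infty$, and the remaining $L_i$ stay bounded. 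In the first case $(\tilde x_1,\tilde x_2)=L_j\,v_j+O(1)$ with $v_j\neq 0$, so its norm blows up. In the corner case $(\tilde x_1,\tilde x_2)=L_j\,v_j+L_{j+1}\,v_{j+1}+O(1)$ with $L_j,L_{j+1}\to-\infty$; if $v_j$ and $v_{j+1}$ are linearly independent then $\|L_j v_j+L_{j+1}v_{j+1}\|$ is bounded below by a positive multiple of $|L_j|+|L_{j+1}|$, hence tends to infinity. The main obstacle is therefore the case where $v_j$ and $v_{j+1}$ are parallel.

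To handle that case I would first determine the signs of the coefficients in \eqref{eq:Ak}: among the factors $\sin\frac{\alpha_j-\alpha_i}{2}$ those with $i<j$ are positive and those with $i>j$ are negative (because the $\alpha_j$ are sorted and lie in $[0,2\pi)$), so their product has sign $(-1)^{2n-1-j}$ and $\sign A_j=(-1)^{n+j}$; in particular consecutive coefficients have opposite signs, $\sign A_{j+1}=-\sign A_j$. Now $v_j\parallel v_{j+1}$ forces $(n-1)(\alpha_{j+1}-\alpha_j)\equiv 0\pmod\pi$; since the $\alpha_j$ are distinct and $0<\alpha_{j+1}-\alpha_j\le\pi/(n-1)$ by \eqref{cond:1}, the only possibility is the equality case $\alpha_{j+1}-\alpha_j=\pi/(n-1)$. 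Then $(n-1)\alpha_{j+1}=(n-1)\alpha_j+\pi$, so the two unit vectors are antipodal, and together with $\sign A_{j+1}=-\sign A_j$ this gives $v_{j+1}=\lambda v_j$ with $\lambda>0$. Hence $L_j v_j+L_{j+1}v_{j+1}=(L_j+\lambda L_{j+1})v_j$ and $L_j+\lambda L_{j+1}\to-\infty$, so the norm again tends to infinity. This covers every boundary point, so $(\tilde x_1,\tilde x_2)$ is a proper map, and combined with Lemma~\ref{lem:first} it is a proper immersion.
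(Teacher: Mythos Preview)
Your proof is correct and follows essentially the same approach as the paper: the case split into ``interior of $I_j$'' versus ``corner $\gamma_j$'', the $2\times 2$ determinant (your linear-independence criterion) $A_jA_{j+1}\sin\bigl((n-1)(\alpha_{j+1}-\alpha_j)\bigr)$, and the sign analysis $\sign A_j=(-1)^{n+j}$ giving $A_jA_{j+1}<0$ in the equality case $\alpha_{j+1}-\alpha_j=\pi/(n-1)$ all match the paper's argument. Your vector formulation $v_j=A_j(\cos(n-1)\alpha_j,\sin(n-1)\alpha_j)$ and the observation that in the degenerate case $v_{j+1}=\lambda v_j$ with $\lambda>0$ is a slightly cleaner packaging of exactly the same computation the paper carries out componentwise.
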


\begin{proof}
Let
$\{(u_m,\theta_m)\}_{m=1,2,3,...}$
be a sequence  on $\Omega_{\alpha_0,...,\alpha_{2n-1}}$
converging to a point  
$(u_{\infty},\theta_{\infty})$
on the boundary of
$\Omega_{\alpha_0,...,\alpha_{2n-1}}$
in the $(u,\theta)$-plane.
Then it is sufficient to show that
$(\tilde x_1(u_m,\theta_m),\tilde x_2(u_m,\theta_m))$
diverges.
We first consider the case
$\theta_{\infty}\ne \gamma_j$ ($j=0,1,\dots,2n-1$),
where $\gamma_j$ is defined
in \eqref{eq:beta-interval}.
Without loss of generality,
we may assume that
$\theta_{\infty}\in I_1
\setminus\{\gamma_{0},\gamma_1\}$,
where $I_1$ is the closed interval as in
\eqref{eq:beta-interval}. 
By Lemma \ref{lem:interval},
we have
$$
    u_{\infty}-\cos(\theta_{\infty}-\alpha_1) =0,\quad
    u_{\infty}-\cos(\theta_{\infty}-\alpha_j) > 0\qquad (j\neq 1).
$$
By \eqref{eq:map}, we have
$$
\pmt{\tilde x_1\\
\tilde x_2}
=
A_1\pmt{
\cos(n-1)\alpha_1 \\
 \sin(n-1)\alpha_1
}
 X_1+\mbox{(a bounded term)},
$$
where
\begin{equation}\label{eq:Xl}
X_j= \log\biggl(u-
              \cos(\theta-\alpha_j)\biggr)
\qquad (j=0,...,2n-1).
\end{equation}
Thus,
$(\tilde x_1(u_m,\theta_m),
\tilde x_2(u_m,\theta_m))$
is unbounded when $m\to \infty$,
since $A_1\ne 0$.  
We next consider the case that
$\theta_\infty=\gamma_j$ for some $j$.
Without loss of generality, we may assume that
$j=1$ and $\theta_\infty=\gamma_1$.
In this case
$X_1$ and $X_2$
both tend to $-\infty$.
We now assume that
$$
\alpha_2-\alpha_{1}<\frac{\pi}{n-1}.
$$
Then  we have
$$
     \pmt{\tilde x_1 \\ \tilde x_2}=
   \begin{pmatrix}
     A_1\cos(n-1)\alpha_1 & A_2\cos(n-1)\alpha_{2} \\
     A_1\sin(n-1)\alpha_1 & A_2\sin(n-1)\alpha_{2}
    \end{pmatrix}
    \begin{pmatrix}
     X_1 \\
     X_2
    \end{pmatrix} + \mbox{(a bounded term)},
$$
which implies that $(\tilde x_1(u_m,\theta_m),
\tilde x_2(u_m,\theta_m))$
is unbounded when $m\to \infty$.
Here, we used the fact that 
 \begin{equation}
\begin{vmatrix}\label{eq:nonzero}
     A_1\cos(n-1)\alpha_1 & A_2\cos(n-1)\alpha_{2} \\
     A_1\sin(n-1)\alpha_1 & A_2\sin(n-1)\alpha_{2}
    \end{vmatrix}
= A_1A_2\sin\biggl((n-1)(\alpha_{2}-\alpha_1)\biggr)\ne 0.
  \end{equation}
Finally, we consider the case
$
\alpha_2-\alpha_{1}={\pi}/(n-1).
$
In this case, we have
$$
\cos (n-1)\alpha_2=-\cos (n-1)\alpha_1,
$$
and
\begin{align*}
\tilde x_1
&=A_1 X_1 \cos (n-1)\alpha_1
+A_2 X_2 \cos (n-1)\alpha_2+\mbox{(a bounded term)}\\
&=(A_1 X_1-A_2X_2) \cos (n-1)\alpha_1
+\mbox{(a bounded term)}.
\end{align*}
\red{Similarly, it holds that
$$
\tilde x_2
=(A_1 X_1-A_2X_2) \sin (n-1)\alpha_1
+\mbox{(a bounded term)}.
$$}
Regarding the fact that the sign of 
$$
\prod_{j\in \{0,...,2n-1\}
\setminus \{i\}} \sin \frac{\alpha_j-\alpha_i}2
$$
is equal to $(-1)^{i+1}$,
we can conclude that $A_1A_2<0$.
Hence
$$
\lim_{m\to \infty}|A_1 X_1-A_2X_2|=\infty.
$$
\red{Since $\cos (n-1)\alpha_1$ and 
$\sin (n-1)\alpha_1$ do not vanish 
at the same time,
either $\tilde x_1(u_m,\theta_m)$ or 
$\tilde x_2(u_m,\theta_m)$ diverges as $m\to \infty$,
which proves the assertion.}
\end{proof}

\begin{corollary}\label{cor:proper}
Let $f$ be a Kobayashi surface 
of general type associated to 
the Weierstrass data as in \eqref{eq:maxfld}.
Suppose that the inequality
\begin{equation}\label{cond:2}
|\alpha_j-\alpha_{j+1}|< \frac{\pi}{n-1}
\qquad (j=0,...,2n-1)
\end{equation}
holds, and also that
$\alpha_0,...,\alpha_{2n-1}$ are distinct.
If $b_1,...,b_{n-1}$ are 
sufficiently close to $0$, then the map
$
(\tilde x_1,\tilde x_2)
:\Omega_{\alpha_0,...,\alpha_{2n-1}}\to \R^2
$
is a proper immersion.
\end{corollary}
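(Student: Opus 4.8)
The plan is to combine two facts: the \emph{immersion} statement for general-type data with $b_1,\dots,b_{n-1}$ near $0$, which was already established in the corollary to Lemma~\ref{lem:first} and applies here because the strict inequality \eqref{cond:2} implies \eqref{cond:1}; and a \emph{properness} statement obtained by perturbing the proof of Proposition~\ref{thm:proper}. The guiding principle is that the general-type surface \eqref{eq:maxfld} is a small perturbation of the principal-type surface \eqref{eq:maxfld2} (the limit $b_1=\cdots=b_{n-1}=0$), so that all the non-degeneracy determinants used in Proposition~\ref{thm:proper} persist by continuity.

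First I would record the global form of $\tilde x_1,\tilde x_2$. By \eqref{eq:phi0}--\eqref{eq:Q3} each $1$-form $\phi_k=p_k(z)/q(z)\,dz$ is rational with $\deg p_k\le 2n-2<\deg q-1$, and its poles are the $2n$ distinct simple poles $e^{\imag\alpha_j}$; hence
\[
\phi_k=\sum_{j=0}^{2n-1}\frac{\tilde B_{k,j}}{z-e^{\imag\alpha_j}}\,dz,\qquad
\tilde B_{k,j}:=\Res_{z=e^{\imag\alpha_j}}\phi_k\qquad(k=1,2).
\]
Since $f$ satisfies the period condition (Proposition~\ref{prop:fold}) the $\tilde B_{k,j}$ are real, and since $\deg p_k<\deg q-1$ the residue of $\phi_k$ at $\infty$ vanishes, so $\sum_j\tilde B_{k,j}=0$. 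Integrating and using $|z-e^{\imag\alpha_j}|^2=2r(u-\cos(\theta-\alpha_j))$ exactly as in the proof of Proposition~\ref{prop;explicit}, the $\log(2r)$ terms cancel and one gets $\tilde x_k=\frac12\sum_{j=0}^{2n-1}\tilde B_{k,j}X_j$ with $X_j$ as in \eqref{eq:Xl}: the same shape as \eqref{eq:map}, but with perturbed coefficients. The continuity input I need is that $\tilde B_{k,j}$ depends continuously on $(b_1,\dots,b_{n-1})$ and that at $b_1=\cdots=b_{n-1}=0$ it reduces to $\tilde B_{1,j}=2A_j\cos(n-1)\alpha_j$, $\tilde B_{2,j}=2A_j\sin(n-1)\alpha_j$ with $A_j\neq 0$ by \eqref{eq:Ak}.

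Next I would reproduce the boundary analysis of Proposition~\ref{thm:proper}. Let $(u_m,\theta_m)$ be a sequence in $\Omega_{\alpha_0,\dots,\alpha_{2n-1}}$ converging to a boundary point $(u_\infty,\theta_\infty)$ in the $(u,\theta)$-plane; as there, it suffices to treat this case. By Lemma~\ref{lem:interval}, either (i) a single index $j_0$ satisfies $u_\infty=\cos(\theta_\infty-\alpha_{j_0})$, so $X_{j_0}\to-\infty$ while the remaining $X_j$ stay bounded, and $\pmt{\tilde x_1\\ \tilde x_2}=\frac12\pmt{\tilde B_{1,j_0}\\ \tilde B_{2,j_0}}X_{j_0}+(\text{bounded})$; or (ii) $\theta_\infty=\gamma_{j_0}$ for some $j_0$ (cf.~\eqref{eq:beta-interval}) and exactly the indices $j_0$ and $j_0+1$ (mod $2n$, with $\alpha_{2n}:=2\pi$) attain the maximum, so $X_{j_0},X_{j_0+1}\to-\infty$ and $\pmt{\tilde x_1\\ \tilde x_2}=M_{j_0}\pmt{X_{j_0}\\ X_{j_0+1}}+(\text{bounded})$ with $M_{j_0}=\frac12\pmt{\tilde B_{1,j_0}&\tilde B_{1,j_0+1}\\ \tilde B_{2,j_0}&\tilde B_{2,j_0+1}}$. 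At $b=0$ the vector in (i) has length $2|A_{j_0}|\neq 0$, and, just as in \eqref{eq:nonzero}, $\det M_{j_0}=A_{j_0}A_{j_0+1}\sin\bigl((n-1)(\alpha_{j_0+1}-\alpha_{j_0})\bigr)$; this is nonzero precisely because the strict inequality \eqref{cond:2} forces $0<(n-1)(\alpha_{j_0+1}-\alpha_{j_0})<\pi$. This is where \eqref{cond:2} is used, and it is why the delicate case $\alpha_{j_0+1}-\alpha_{j_0}=\pi/(n-1)$ treated at the end of the proof of Proposition~\ref{thm:proper} does not occur here. Hence at $b=0$ the relevant vector is nonzero and the relevant matrix is invertible, so $(\tilde x_1,\tilde x_2)$ diverges along $(u_m,\theta_m)$.

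Finally I would promote this from $b=0$ to $b$ near $0$. There are only $2n$ vectors $(\tilde B_{1,j},\tilde B_{2,j})$ and $2n$ matrices $M_j$ to consider, and each non-degeneracy condition (nonvanishing of a vector, nonvanishing of a determinant) is an open condition on $(b_1,\dots,b_{n-1})$ holding at the origin; intersecting these finitely many neighborhoods of $0$ with the one supplied by the immersion statement gives a neighborhood on which, for every escaping sequence, $(\tilde x_1,\tilde x_2)$ diverges, i.e.\ $(\tilde x_1,\tilde x_2)$ is a proper immersion. I expect the only genuinely delicate point to be this uniformity in $(b_1,\dots,b_{n-1})$; it is handled by the finiteness of the list of non-degeneracy conditions, everything else being a continuity perturbation of the principal-type case of Proposition~\ref{thm:proper}.
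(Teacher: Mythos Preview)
Your proposal is correct and follows essentially the same approach as the paper: partial-fraction decomposition of $\phi_k$ to obtain $\tilde x_k=\tfrac12\sum_j \tilde B_{k,j}X_j$, continuity of the residues $\tilde B_{k,j}$ in $(b_1,\dots,b_{n-1})$ with the principal-type values recovered at $b=0$, and then the boundary analysis of Proposition~\ref{thm:proper} with the strict inequality \eqref{cond:2} ensuring the relevant $2\times 2$ determinant is nonzero. Your explicit remark that only finitely many open non-degeneracy conditions need to hold (one vector per end and one determinant per adjacent pair), and hence a single neighborhood of $0$ suffices, makes a point the paper leaves implicit.
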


\begin{proof}
Regarding the expressions 
\eqref{eq:phi0} and using
the assumption that
$\alpha_0,...,\alpha_{2n-1}$ are distinct,
we can write
$$
\phi_k=\sum_{j=0}^{2n-1}\frac{B_{k,j}}{z-e^{\imag \alpha_j}}dz
\qquad (k=0,1,2),
$$
where $B_{k,j}$ are
real numbers depending on the coefficients of the polynomials
$p_k(z)$ given in \eqref{eq:Q2}, \eqref{eq:Q3}.
In particular, $B_{k,j}$ depend continuously
on the $n-1$ parameters $b_1,...,b_{n-1}$.
Since
$$
\tilde x_k=\frac12 \sum_{j=0}^{2n-1} B_{k,j}
\log\biggl(u-\cos(\theta-\alpha_j)\biggr)
\qquad (k=0,1,2),
$$
one can prove the assertion 
using the same argument as in the proof of
Proposition~\ref{thm:proper},
using the identity
$$
     \pmt{\tilde x_1 \\ \tilde x_2}=\frac12
   \begin{pmatrix}
     B_{1,1} & B_{1,2} \\
     B_{2,1} & B_{2,2}
    \end{pmatrix}
    \begin{pmatrix}
     \log\bigl(u-\cos(\theta-\alpha_1)\bigr) \\
    \log\bigl(u-\cos(\theta-\alpha_2)\bigr)
    \end{pmatrix} + \mbox{(a bounded term)},
$$
and the limit formula
$$
\lim_{(b_1,...,b_{n-1})\to \vect 0}
\frac12\pmt{B_{1,1},B_{1,2}\\ B_{2,1},B_{2,2}}
=
\begin{pmatrix}
     A_1\cos(n-1)\alpha_1 &  A_2\cos(n-1)\alpha_{2} \\
 A_1\sin(n-1)\alpha_1  & A_2\sin(n-1)\alpha_{2}
    \end{pmatrix}.
$$
\end{proof}

We now get the following result:

\begin{theorem}
Let $f$ be a Kobayashi surface of
principal type associated to the 
Weierstrass data as in \eqref{eq:maxfld2}.
Suppose that $(\alpha_{2n}:=2\pi)$
\begin{equation}
|\alpha_j-\alpha_{j+1}|\le \frac{\pi}{n-1}
\qquad(j=0,....,2n-1),
\end{equation}
and that $\alpha_0,...,\alpha_{2n-1}$ are
distinct.
We let $\tilde f:\Omega_{\alpha_{0},...,\alpha_{2n-1}}
\to \R^3_1$ denote its analytic extension. 
 Then the map
$
(\tilde x_1,\tilde x_2):\Omega_{\alpha_{0},...,\alpha_{2n-1}}
\to \R^2
$
is a diffeomorphism. 
In particular, the image of $\tilde f$
gives a zero-mean curvature entire graph of
mixed type.
\end{theorem}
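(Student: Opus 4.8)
The plan is to prove that the real analytic map $\Psi:=(\tilde x_1,\tilde x_2):\Omega_{\alpha_0,\dots,\alpha_{2n-1}}\to\R^2$ is a proper local diffeomorphism, and then to upgrade this to a global diffeomorphism by a covering-space argument. Both analytic ingredients are already in hand: under the hypothesis \eqref{cond:1}, Lemma~\ref{lem:first} shows that $(\tilde x_1,\tilde x_2)$ is an immersion, hence (domain and target having the same dimension) a local diffeomorphism; and under \eqref{cond:1} together with the distinctness of $\alpha_0,\dots,\alpha_{2n-1}$, Proposition~\ref{thm:proper} shows that $(\tilde x_1,\tilde x_2)$ is proper. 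Thus the substantive analytic work is already done, and the remainder of the argument is soft.

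First I would record that $\Omega_{\alpha_0,\dots,\alpha_{2n-1}}$ is a connected---indeed simply connected, as noted right after the proof of Theorem~\ref{thm:domain}---smooth $2$-manifold \emph{without boundary}: it is the open subset $\{(u,\theta)\in\R\times(\R/2\pi\Z)\,;\,u>\max_j\cos(\theta-\alpha_j)\}$ together with the single extra point $p_\infty$, in a neighborhood of which the analytic chart inherited from a neighborhood of $z=0$ in $\{|z|\le 1\}$ is used. Then I would argue as follows: a proper local diffeomorphism between boundaryless smooth manifolds of equal dimension has image that is both open (local diffeomorphism) and closed (properness), so by connectedness of $\R^2$ the map $\Psi$ is onto; a proper surjective local diffeomorphism onto $\R^2$ is a covering map; since $\R^2$ is simply connected this covering is trivial; and since $\Omega_{\alpha_0,\dots,\alpha_{2n-1}}$ is connected it has a single sheet. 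Hence $\Psi$ is bijective, so a diffeomorphism. I do not anticipate a genuine obstacle here---the only points requiring care are the boundarylessness and connectedness of $\Omega_{\alpha_0,\dots,\alpha_{2n-1}}$, so that the covering-space machinery applies cleanly; all the hard estimates are packaged into Lemma~\ref{lem:first} and Proposition~\ref{thm:proper}.

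It would then remain to read off the geometric conclusion. Set $\psi:=\Psi^{-1}:\R^2\to\Omega_{\alpha_0,\dots,\alpha_{2n-1}}$ and $\lambda:=\tilde x_0\circ\psi$, a real analytic function on $\R^2$; then $\tilde f\circ\psi=(\lambda(x,y),x,y)$, so the image of $\tilde f$ is exactly the entire graph $\{(t,x,y)\in\R^3_1\,;\,t=\lambda(x,y)\}$. On the region $\{u>1\}$ this graph coincides with the space-like maximal surface $f$, so there $\lambda$ satisfies the analytic zero mean curvature equation for graphs; since $\lambda$ is real analytic on the connected set $\R^2$, that equation holds on all of $\R^2$, and the graph has zero mean curvature. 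Finally, the graph is not a plane because the Gauss map $g=z^{n-1}$ is non-constant for $n\ge 2$; since it is space-like on the nonempty open set $\{u>1\}$ and, exactly as in the examples of the introduction and of Section~\ref{sec2} (notably $\mathcal J_2$ and $\mathcal S_2$), changes its causal type across the fold circle $S^1=\{u=1\}$, it is of mixed type. This completes the plan.
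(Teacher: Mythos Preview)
Your proposal is correct and takes essentially the same approach as the paper: the paper's proof simply invokes Proposition~\ref{thm:proper} (which packages Lemma~\ref{lem:first} for the immersion property together with the properness argument) and then cites \cite{H} for the fact that a proper immersion from a connected manifold into a simply connected manifold of the same dimension is a diffeomorphism. Your covering-space argument is precisely the standard unfolding of that citation, and your explicit derivation of the ``entire graph of mixed type'' conclusion makes explicit what the paper leaves implicit in the statement.
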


\begin{proof}
This theorem follows immediately from 
Proposition \ref{thm:proper},
applying the fact that
proper \red{immersions} between the same dimensional
manifolds are diffeomorphisms \red{under the
assumption that the target space is
simply-connected} 
(cf. \cite[Corollary]{H}).
\end{proof}

Similarly, the following assertion
immediately follows
from Corollary \ref{cor:proper}.

\begin{theorem}\label{thm:gen}
Let $f$ be a Kobayashi surface of 
general type associated to the Weierstrass data as
in \eqref{eq:maxfld}.
Suppose that the inequality
\begin{equation}\label{eq:condA}
|\alpha_j-\alpha_{j+1}|< \frac{\pi}{n-1}
\qquad (j=0,...,2n-1),
\end{equation}
and also that $\alpha_0,...,\alpha_{2n-1}$ are
distinct. 
We let $\tilde f:\Omega_{\alpha_{0},...,\alpha_{2n-1}}
\to \R^3_1$ denote its analytic extension. 
If $b_1,...,b_{n-1}$ are 
sufficiently close to $0$ then the map
$$
(\tilde x_1,\tilde x_2):
\Omega_{\alpha_{0},...,\alpha_{2n-1}}
\to \R^2
$$
is a diffeomorphism.
In particular, the image of $\tilde f$
gives a zero-mean curvature entire graph of
mixed type.
As a consequence, we get a 
 $(4n-7)$-parameter family
of zero-mean curvature entire graphs
up to congruence and homothety.
\end{theorem}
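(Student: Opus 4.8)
The plan is to run exactly the argument of the preceding principal-type theorem, with Corollary~\ref{cor:proper} supplying the analytic input. First I would invoke Corollary~\ref{cor:proper}: under the strict inequality \eqref{eq:condA} (the strictness, in contrast to the non-strict condition in the principal-type case, is what makes the perturbation argument there work) together with the distinctness of $\alpha_0,\dots,\alpha_{2n-1}$, there is a nonempty open neighborhood of $\vect{0}$ in the parameter space of $(b_1,\dots,b_{n-1})$ on which the analytic extension $\tilde f$---which is defined on $\Omega_{\alpha_0,\dots,\alpha_{2n-1}}$ by Corollary~\ref{prop:extf}---has the property that $(\tilde x_1,\tilde x_2)\colon\Omega_{\alpha_0,\dots,\alpha_{2n-1}}\to\R^2$ is a proper immersion. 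Since $\Omega_{\alpha_0,\dots,\alpha_{2n-1}}$ is a connected, simply connected $2$-manifold (as recorded after Theorem~\ref{thm:domain}) and the target $\R^2$ is simply connected, the cited topological fact \cite[Corollary]{H}---a proper immersion between equidimensional manifolds with simply connected target is a diffeomorphism---shows that $(\tilde x_1,\tilde x_2)$ is a diffeomorphism onto $\R^2$.

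The graph conclusion is then automatic: setting $(x,y):=(\tilde x_1,\tilde x_2)$ and $\lambda:=\tilde x_0\circ(\tilde x_1,\tilde x_2)^{-1}$, which is real analytic since $\tilde f$ is real analytic and $(\tilde x_1,\tilde x_2)$ has nowhere-vanishing Jacobian, the image of $\tilde f$ is the entire graph $\{(t,x,y)\in\R^3_1\,;\,t=\lambda(x,y)\}$. To see that it is of mixed type, I would observe that on the subregion corresponding to $0<|z|\le1$ (equivalently $u\ge1$) the map $\tilde f$ restricts to the space-like maxface $f$, since $|g|<1$ there, so the graph has a nonempty space-like open subset; on the other hand, the Gauss map $g$ in \eqref{eq:maxfld} has degree $n-1\ge1$, hence is non-constant and the graph is not a plane, so by Calabi's theorem \cite{C} it cannot be space-like everywhere. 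Thus the graph changes causal type, i.e.\ it is a zero mean curvature entire graph of mixed type.

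For the parameter count I would appeal to Proposition~\ref{prop:4}: up to congruence and homothety, the general-type data \eqref{eq:maxfld} is carried by the $2n-1$ angular parameters $\alpha_1,\dots,\alpha_{2n-1}$ (after normalizing $\alpha_0=0$) together with $n-3$ complex parameters $b_1,\dots,b_{n-3}$ in the unit disk, i.e.\ $4n-7$ real parameters, and the open conditions \eqref{eq:condA} and ``$(b_1,\dots,b_{n-1})$ close to $\vect{0}$'' cut out a nonempty open subset of this family on which all of the above applies; this yields the claimed $(4n-7)$-parameter family. Since Corollary~\ref{cor:proper} does the essential analytic work, I do not expect a serious obstacle; the only items needing care are formal---confirming that $\Omega_{\alpha_0,\dots,\alpha_{2n-1}}$ together with the point $p_\infty$ is genuinely a connected, simply connected manifold so that \cite[Corollary]{H} applies, and that the admissible $(b_1,\dots,b_{n-1})$ form a nonempty open set (it contains $\vect{0}$) which for $n\ge4$ generically produces surfaces of general rather than principal type.
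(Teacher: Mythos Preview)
Your proposal is correct and follows essentially the same approach as the paper, which simply states that the theorem ``immediately follows from Corollary~\ref{cor:proper}'' by the same argument as the preceding principal-type theorem (i.e., invoking \cite[Corollary]{H}). You have additionally spelled out the mixed-type argument via Calabi's theorem and the $(4n-7)$-parameter count via Proposition~\ref{prop:4}, which the paper leaves implicit; these elaborations are correct and helpful.
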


\begin{remark}
The assumption in the theorem that
$b_1,...,b_{n-1}$ are 
sufficiently close to $0$ 
is crucial, and in fact, for larger
$b_1,...,b_{n-1}$, the corresponding
Kobayashi surface might not give 
an entire graph.
In fact, let $f_b$ be 
the Kobayashi surface of order $4$
with angular data 
$\alpha_j=\pi j/4$ ($j=0,1,...,7$)
whose Weierstrass data is
$$
g=z^2\frac{z-b}{1-\overline{b}z},\qquad
\omega=
\frac{(1-\overline{b}z)^2}
{z^8-1}dz.
$$
We know that the analytic extension of 
the image of $f_b$
gives an entire graph if $|b|$ is sufficiently
small. 
However, for example, if we set $b=-0.75$,
$f_b$ appears to have self-intersections.
(See Figure \ref{fig:beq0pt9}).
\end{remark}

\begin{figure}
 \centering
 \includegraphics[width=0.5\textwidth]{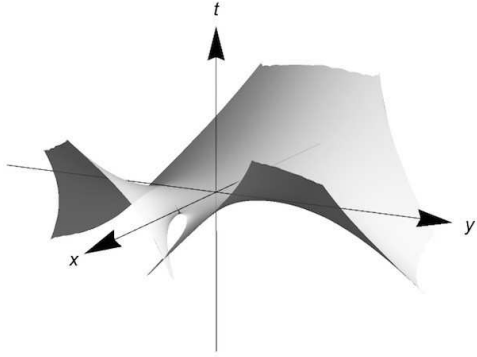}
 \caption{
The figure of $f_b$ for $b=-0.75$} 
\label{fig:beq0pt9}
\end{figure}

\section{The condition for $\tilde f$
of principal type to be immersed}
\label{sec:4}

In this section, we consider 
Kobayashi surfaces of principal type
with an angular data $(\alpha_0,...,\alpha_{2n-1})$
satisfying the weaker condition (cf. \eqref{cond:1})
\begin{equation}\label{condB}
|\alpha_j-\alpha_{j+1}|\le  \frac{2\pi}{n-1}
\qquad (j=0,...,2n-1),
\end{equation}
where $\alpha_{2n}:=2\pi$. 
Under this condition, we prove the following:

\begin{lemma}\label{lem:second}
Let $f$ be a Kobayashi surface of
principal type associated to the 
Weierstrass data as
in \eqref{eq:maxfld2}.
\red{Then 
$
\tilde f:\Omega_{\alpha_{0},....,\alpha_{2n-1}}
\to \R^3_1
$
is an immersion
if and only if
$(\alpha_0,...,\alpha_{2n-1})$
satisfies \eqref{condB}.
$($Here 
$\alpha_0,...,\alpha_{2n-1}$ 
are not necessarily distinct.$)$ }
\end{lemma}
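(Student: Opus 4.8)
The plan is to characterize when $\tilde f=(\tilde x_0,\tilde x_1,\tilde x_2)$ is an immersion by computing all three Jacobians $\partial(\tilde x_i,\tilde x_j)/\partial(u,\theta)$, $0\le i<j\le 2$: the map $\tilde f$ is an immersion at a point of $\Omega_{\alpha_{0},...,\alpha_{2n-1}}\setminus\{p_\infty\}$ exactly when at least one of these is nonzero there, and at $p_\infty$ it is automatically an immersion since $z=0$ is a regular point of $f$. One of the three Jacobians has already been found in the proof of Lemma~\ref{lem:first}, so the first thing to do is to compute the other two by the same method: substitute the principal-type data \eqref{eq:maxfld2} — for which $\phi_0=-2\imag\Lambda z^{n-1}\,dz/q(z)$, $\phi_1=\imag\Lambda(1+z^{2n-2})\,dz/q(z)$, $\phi_2=-\Lambda(1-z^{2n-2})\,dz/q(z)$ with $q(z)=\prod_{j=0}^{2n-1}(z-e^{\imag\alpha_j})$ — into \eqref{eq:jacobian}, and simplify using $|q(z)|^2=(2r)^{2n}\prod_j(u-\cos(\theta-\alpha_j))$ together with the conversion $(r^m-r^{-m})/(r-r^{-1})=U_{m-1}(u)$, where $U_m$ (resp.\ $T_m$) denotes the Chebyshev polynomial of the second (resp.\ first) kind and $u=(r+r^{-1})/2$. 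The result should be
\begin{align*}
\frac{\partial(\tilde x_1,\tilde x_2)}{\partial(u,\theta)}&=\frac{U_{2n-3}(u)}{2^{2n-1}D},&
\frac{\partial(\tilde x_0,\tilde x_1)}{\partial(u,\theta)}&=\frac{\sin((n-1)\theta)\,U_{n-2}(u)}{2^{2n-2}D},\\
\frac{\partial(\tilde x_0,\tilde x_2)}{\partial(u,\theta)}&=\frac{-\cos((n-1)\theta)\,U_{n-2}(u)}{2^{2n-2}D},
\end{align*}
where $D:=\prod_{j=0}^{2n-1}(u-\cos(\theta-\alpha_j))>0$ on $\Omega_{\alpha_{0},...,\alpha_{2n-1}}$ by Lemma~\ref{lem:interval}; these identities hold first for $u\ge1$ and then on all of $\Omega_{\alpha_{0},...,\alpha_{2n-1}}\setminus\{p_\infty\}$ by real analyticity (Corollary~\ref{prop:extf}).

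From these formulas the crucial observation is immediate: since $\sin((n-1)\theta)$ and $\cos((n-1)\theta)$ never vanish simultaneously, and since $U_{2n-3}=2T_{n-1}U_{n-2}$ (so every zero of $U_{n-2}$ is also a zero of $U_{2n-3}$), all three Jacobians vanish at a point of $\Omega_{\alpha_{0},...,\alpha_{2n-1}}\setminus\{p_\infty\}$ if and only if $U_{n-2}(u)=0$, i.e.\ if and only if $u$ is one of the zeros $\cos(k\pi/(n-1))$, $k=1,\dots,n-2$, of $U_{n-2}$; the largest of these is $\cos(\pi/(n-1))$. Consequently $\tilde f$ is an immersion precisely when $U_{n-2}$ has no zero among the values of $u$ attained on $\Omega_{\alpha_{0},...,\alpha_{2n-1}}$, and it only remains to compare $\cos(\pi/(n-1))$ with that $u$-range.

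If \eqref{condB} holds, then each $\alpha_{j+1}-\alpha_j$ (with $\alpha_{2n}:=2\pi$) lies in $[0,2\pi/(n-1)]$, so $(\alpha_{j+1}-\alpha_j)/2\in[0,\pi/(n-1)]\subset[0,\pi]$ and therefore $\cos((\alpha_{j+1}-\alpha_j)/2)\ge\cos(\pi/(n-1))$; by Corollary~\ref{cor:Omega2}, every $(u,\theta)\in\Omega_{\alpha_{0},...,\alpha_{2n-1}}$ satisfies $u>\min_j\cos((\alpha_{j+1}-\alpha_j)/2)\ge\cos(\pi/(n-1))$, so $u$ exceeds the largest zero of $U_{n-2}$ and hence $U_{n-2}(u)\ne0$; thus $\tilde f$ is an immersion. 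Conversely, if \eqref{condB} fails then $n\ge3$ (the condition is vacuous for $n=2$), and some consecutive difference $\alpha_{k+1}-\alpha_k$ exceeds $2\pi/(n-1)$. I would then take $\theta^*:=(\alpha_k+\alpha_{k+1})/2$ and $u^*:=\cos(\pi/(n-1))$: because $\theta^*$ is the midpoint of an arc of $S^1$ meeting none of the points $e^{\imag\alpha_j}$, the $\alpha$-values nearest $\theta^*$ in $\R/2\pi\Z$ are $\alpha_k$ and $\alpha_{k+1}$, both at angular distance $(\alpha_{k+1}-\alpha_k)/2$, so $\max_j\cos(\theta^*-\alpha_j)=\cos((\alpha_{k+1}-\alpha_k)/2)<\cos(\pi/(n-1))=u^*$ (equal to $-1<u^*$ in the degenerate case $\alpha_{k+1}-\alpha_k=2\pi$, which occurs when all the $\alpha_j$ coincide). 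Hence $(u^*,\theta^*)\in\Omega_{\alpha_{0},...,\alpha_{2n-1}}$ by \eqref{eq:omega2}, while $U_{n-2}(u^*)=0$, so $\tilde f$ is not an immersion at $(u^*,\theta^*)$. This completes the equivalence.

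The genuinely routine part is the two Jacobian computations, which run word-for-word parallel to the one in Lemma~\ref{lem:first}; the points to watch are the powers of $r$ and the signs in the reduction to $U_{2n-3}$ and $U_{n-2}$, together with the recognition of the identity $U_{2n-3}=2T_{n-1}U_{n-2}$, which is exactly what forces the three Jacobians to vanish simultaneously rather than merely pairwise. The one step needing a little care is the verification, in the converse direction, that the test point $(u^*,\theta^*)$ really lies in $\Omega_{\alpha_{0},...,\alpha_{2n-1}}$ — that is, that $\max_j\cos(\theta^*-\alpha_j)$ is attained at the two endpoints of the offending arc — and the bookkeeping for the degenerate case in which all the $\alpha_j$ agree; everything else follows directly from Lemma~\ref{lem:interval}, Corollary~\ref{cor:Omega2}, and the fact that $U_{n-2}$ is positive to the right of its largest zero.
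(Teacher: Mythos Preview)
Your proposal is correct and follows essentially the same approach as the paper: both compute the Jacobians $\partial(\tilde x_0,\tilde x_1)/\partial(u,\theta)$ and $\partial(\tilde x_0,\tilde x_2)/\partial(u,\theta)$ via \eqref{eq:jacobian}, obtain the factors $U_{n-2}(u)\sin((n-1)\theta)$ and $-U_{n-2}(u)\cos((n-1)\theta)$, and use the Chebyshev identity $U_{2n-3}=2T_{n-1}U_{n-2}$ together with Corollary~\ref{cor:Omega2} (for sufficiency) and a midpoint test point with $u=\cos(\pi/(n-1))$ (for necessity). Your organization is slightly tidier in that you isolate the equivalence ``all three Jacobians vanish $\Leftrightarrow U_{n-2}(u)=0$'' up front, whereas the paper treats the two implications separately; and you handle the converse without first normalizing $\alpha_0=0$, but these are cosmetic differences.
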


\begin{proof}
Using \eqref{eq:jacobian}, we have
(see \eqref{eq:beta} for the definition of $\Lambda$)
\allowdisplaybreaks{%
 \begin{align*}
  \frac{\partial(\tilde x_0,\tilde x_1)}{\partial(u,\theta)}
  &=\frac{-\imag r^3}{r^2-1}\cdot
    \begin{vmatrix}
     \hat\phi_0 & \overline{ \hat\phi_0} \\
      \hat\phi_1 & \overline{ \hat\phi_1}
    \end{vmatrix} 
=
   \frac{-\imag r^3}{r^2-1}\cdot
\frac{    \begin{vmatrix}
      -2\imag \Lambda z^{n-1} & 2\imag \bar \Lambda
\bar z^{n-1}\\
       \imag \Lambda(1+z^{2n-2}) & -\imag 
\bar  \Lambda(1+\bar z^{2n-2})
    \end{vmatrix}}{%
         \left|\displaystyle\prod_{j=0}^{2n-1}
(z-e^{\imag\alpha_j})^2\right|}
\\
   &= \frac{2\imag r^3}{r^2-1}\cdot
      \frac{z^{n-1}+z^{n-1}\bar z^{2n-2}-\bar z^{n-1}-\bar
          z^{n-1}z^{2n-2}}{%
              2^{2n} r^{2n}
       \displaystyle\prod_{j=0}^{2n-1}
        \biggl(u-\cos(\theta-\alpha_j)\biggr)^{2}}\\
   &= \frac{4(r^{n-1}-r^{-n+1})}{r-r^{-1}}\cdot
      \frac{\sin (n-1)\theta}{
              2^{2n}
       \displaystyle\prod_{j=0}^{2n-1}
        \biggl(u-\cos(\theta-\alpha_j)\biggr)^{2}}\\
&=
    \frac{U_{n-2}(u)\sin (n-1)\theta}{
              2^{2n-2}
       \displaystyle\prod_{j=0}^{2n-1}
        \biggl(u-\cos(\theta-\alpha_j)\biggr)^{2}}
 \end{align*}}
and
 \begin{align*}
  \frac{\partial(\tilde x_0,\tilde x_2)}{\partial(u,\theta)}
  &=\frac{-\imag r^3}{r^2-1}\cdot
    \begin{vmatrix}
     \hat \phi_0 & \overline{\hat \phi_0} \\
     \hat \phi_2 & \overline{\hat \phi_2}
    \end{vmatrix}=
   \frac{-\imag r^3}{r^2-1}\cdot
\frac{    \begin{vmatrix}
      -2\imag \Lambda z^{n-1} & 2\imag \bar \Lambda
\bar z^{n-1}\\
       -\Lambda(1-z^{2n-2}) & - \bar \Lambda
(1-\bar z^{2n-2})
    \end{vmatrix}}{%
         \left|\displaystyle\prod_{j=0}^{2n-1}
(z-e^{\imag\alpha_j})^2\right|}
\\
&=
    \frac{-U_{n-2}(u)\cos (n-1)\theta}{
              2^{2n-2}
       \displaystyle\prod_{j=0}^{2n-1}
        \biggl(u-\cos(\theta-\alpha_j)\biggr)^{2}},
 \end{align*}
where $U_{n-2}$ is
the Chebyshev polynomial of the
second kind of degree $n-2$.
Since $|\alpha_{j}-\alpha_{j+1}|\leq {2\pi}/(n-1)$,
Corollary \ref{cor:Omega2} yields that
 \[
     u > \cos\frac{\pi}{n-1}>\cos\frac{\pi}{n-2}.
 \] 
By \cite[Proposition A.3]{FKKRUY},
$U_{n-2}(u)$ is monotone increasing
on $[\cos \frac{\pi}{n-2},\infty)$
and
 \[
     U_{n-2}(u)> U_{n-2}\left(\cos\frac{\pi}{n-1}\right)
               = \frac{\sin\pi}{\sin(\pi/(n-1))}=0
 \]
holds.
In particular, $U_{n-2}(u)> 0$
on  $\Omega_{\alpha_{0},....,\alpha_{2n-1}}$.
Since
	\[
	   \frac{\partial(\tilde x_0,\tilde x_1)}{\partial
	(u,\theta)},\qquad
	   \frac{\partial(\tilde x_0,\tilde x_2)}{\partial (u,\theta)}
	\]
do not vanish simultaneously, we can
conclude that
$\tilde f$ is an immersion.

\red{On the other hand, if} $(\alpha_0,...,\alpha_{2n-1})$
does not satisfy the condition
\eqref{condB}, then $\tilde f$
is not an immersion.
In fact, in this case, we may set
$\alpha_0=0$ and $\alpha_{1}>{2\pi}/(n-1)$.
We set
$$
u_0:=\cos\left(\frac{\pi}{n-1}\right),\qquad
\theta_0:=\frac{\alpha_1}2.
$$
By Corollary \ref{cor:Omega},
$(u_0,\theta_0)\in \Omega_{\alpha_0,...,\alpha_{2n-1}}$.
Since $U_{n-1}(u_0)=0$, we have
$$
\left.\frac{\partial(\tilde x_0,\tilde x_1)}
{\partial (u,\theta)}\right|_{(u,\theta)= (u_0,\theta_0)}
=
\left.\frac{\partial(\tilde x_0,\tilde x_2)}
{\partial (u,\theta)}\right|_{(u,\theta)= (u_0,\theta_0)}
=0.
$$
On the other hand,
the identity 
$\sin (2n-2)\theta=2\sin (n-1)\theta \cos (n-1)\theta$
induces the relation $U_{2n-3}=2T_{n-1}U_{n-2}$
for the Chebyshev polynomials.
In particular,  $U_{2n-3}(u_0)=0$ and
(see the proof of Lemma \ref{lem:first})
$$
\left.\frac{\partial(\tilde x_1,\tilde x_2)}
{\partial (u,\theta)}\right|_{(u,\theta)= (u_0,\theta_0)}=0.
$$
Hence $\tilde f$ is not an immersion at $(u_0,\theta_0)$.
\end{proof}

We next discuss the properness of
the map $\tilde f$:

\begin{proposition}\label{thm:proper2}
Let $f$ be a Kobayashi surface 
of principal type associated to the Weierstrass data as
in \eqref{eq:maxfld2}.
Suppose that the inequalities
\begin{equation}\label{condB2}
|\alpha_j-\alpha_{j+1}|<\frac{2\pi}{n-1}
\qquad (j=0,...,2n-1)
\end{equation}
hold, and also that $\alpha_0,...,\alpha_{2n-1}$ are
distinct.
Then
$
\tilde f:
\Omega_{\alpha_0,...,\alpha_{2n-1}}\to \R^3_1
$
is a proper immersion.
\end{proposition}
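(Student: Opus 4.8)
The plan is to follow the proof of Proposition~\ref{thm:proper} almost verbatim; the weaker hypothesis \eqref{condB2} only forces one extra subcase, which is treated by the same idea. Since \eqref{condB2} implies \eqref{condB}, Lemma~\ref{lem:second} already gives that $\tilde f$ is an immersion, so only properness remains to be shown. As in the proof of Proposition~\ref{thm:proper}, because $\tilde f$ extends to a finite value at $p_\infty$ (cf.~Corollary~\ref{prop:extf}) and $u$ is bounded below on $\Omega_{\alpha_0,\dots,\alpha_{2n-1}}$ (Corollary~\ref{cor:Omega2}), a sequence leaving every compact subset of $\Omega_{\alpha_0,\dots,\alpha_{2n-1}}$ must, after passing to a subsequence, converge in $\R\times\R/2\pi\Z$ to a frontier point $(u_\infty,\theta_\infty)$ with $u_\infty=\max_{j}\cos(\theta_\infty-\alpha_j)$, so it suffices to show $\tilde f(u_m,\theta_m)\to\infty$ along every such sequence. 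Using the cyclic symmetry of the angular data (Remark~\ref{eq:rotation}), which changes $\tilde f$ only by a rigid motion of $\R^3_1$, together with the invariance in Lemma~\ref{lem:interval}, I would assume $\theta_\infty\in I_1$ and work from the explicit formula \eqref{eq:map}, writing $X_j:=\log(u-\cos(\theta-\alpha_j))$.

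Next I would split into the two cases of Proposition~\ref{thm:proper}. If $\theta_\infty\notin\{\gamma_0,\gamma_1\}$, Lemma~\ref{lem:interval} shows $u_\infty-\cos(\theta_\infty-\alpha_1)=0$ while $u_\infty-\cos(\theta_\infty-\alpha_j)>0$ for $j\neq1$, so $X_1\to-\infty$ and the other $X_j$ stay bounded; since $A_1\neq0$ and $(\cos(n-1)\alpha_1,\sin(n-1)\alpha_1)$ is a unit vector, $(\tilde x_1,\tilde x_2)$, hence $\tilde f$, diverges. If $\theta_\infty=\gamma_1$ (the case $\gamma_0$ being symmetric), then $X_1,X_2\to-\infty$ and the other $X_j$ are bounded, so by \eqref{eq:map}
\[
\pmt{\tilde x_1\\ \tilde x_2}
 = \pmt{A_1\cos(n-1)\alpha_1 & A_2\cos(n-1)\alpha_2\\
         A_1\sin(n-1)\alpha_1 & A_2\sin(n-1)\alpha_2}\pmt{X_1\\ X_2}
   +\mbox{(a bounded term)},
\]
and the determinant of the matrix is $A_1A_2\sin\bigl((n-1)(\alpha_2-\alpha_1)\bigr)$. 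The new point is that under \eqref{condB2} we have $0<\alpha_2-\alpha_1<2\pi/(n-1)$, so whenever $\alpha_2-\alpha_1\neq\pi/(n-1)$ the sine factor is nonzero; recalling from the proof of Proposition~\ref{thm:proper} that $A_1A_2<0$ (from the sign of $\prod_{i\neq j}\sin\tfrac{\alpha_j-\alpha_i}{2}$), the matrix is then invertible, and since $(X_1,X_2)\to(-\infty,-\infty)$ this forces $(\tilde x_1,\tilde x_2)$, hence $\tilde f$, to diverge. The subrange $\pi/(n-1)<\alpha_2-\alpha_1<2\pi/(n-1)$ is precisely what Proposition~\ref{thm:proper} does not cover, and there the determinant is a product of two negative numbers, hence positive.

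Finally I would treat the degenerate subcase $\alpha_2-\alpha_1=\pi/(n-1)$ exactly as at the end of the proof of Proposition~\ref{thm:proper}: then $\cos(n-1)\alpha_2=-\cos(n-1)\alpha_1$ and $\sin(n-1)\alpha_2=-\sin(n-1)\alpha_1$, so $\tilde x_1=(A_1X_1-A_2X_2)\cos(n-1)\alpha_1+\mbox{(a bounded term)}$ and $\tilde x_2=(A_1X_1-A_2X_2)\sin(n-1)\alpha_1+\mbox{(a bounded term)}$; since $A_1A_2<0$, the numbers $A_1$ and $-A_2$ share a sign, so $A_1X_1-A_2X_2$ is a combination of $X_1\to-\infty$ and $X_2\to-\infty$ with coefficients of one sign, hence $|A_1X_1-A_2X_2|\to\infty$, and as $\cos(n-1)\alpha_1,\sin(n-1)\alpha_1$ do not vanish simultaneously, $(\tilde x_1,\tilde x_2)$ diverges. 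In every case $\tilde f(u_m,\theta_m)\to\infty$, so $\tilde f$ is proper; combined with the immersion property, $\tilde f$ is a proper immersion. I expect the only delicate point, as in Proposition~\ref{thm:proper}, to be these corner subcases, where divergence of the individual $X_j$ need not survive the linear combination and one must use the sign relation $A_1A_2<0$; the extra work beyond Proposition~\ref{thm:proper} is merely noting that this relation keeps the relevant $2\times2$ determinant nonzero throughout $0<\alpha_2-\alpha_1<2\pi/(n-1)$.
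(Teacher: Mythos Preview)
Your argument is correct, but it takes a slightly different route from the paper at the corner case $\theta_\infty=\gamma_1$. You work entirely with the $2\times2$ block governing $(\tilde x_1,\tilde x_2)$, whose determinant $A_1A_2\sin\bigl((n-1)(\alpha_2-\alpha_1)\bigr)$ vanishes at $\alpha_2-\alpha_1=\pi/(n-1)$, forcing you to treat that value separately via the sign relation $A_1A_2<0$ exactly as in Proposition~\ref{thm:proper}. The paper instead passes to the full $3\times2$ matrix
\[
M=\begin{pmatrix}
A_1 & A_2\\
A_1\cos(n-1)\alpha_1 & A_2\cos(n-1)\alpha_2\\
A_1\sin(n-1)\alpha_1 & A_2\sin(n-1)\alpha_2
\end{pmatrix}
\]
acting on $(X_1,X_2)$, and checks that $M$ has rank~$2$ by computing
\[
\begin{pmatrix}
\cos(n-1)\alpha_2-\cos(n-1)\alpha_1\\
\sin(n-1)\alpha_2-\sin(n-1)\alpha_1
\end{pmatrix}
=2\sin\Bigl((n-1)\tfrac{\alpha_2-\alpha_1}{2}\Bigr)
\begin{pmatrix}-\sin(n-1)\gamma_1\\ \cos(n-1)\gamma_1\end{pmatrix},
\]
which is nonzero throughout $0<\alpha_2-\alpha_1<2\pi/(n-1)$. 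This buys a uniform treatment with no degenerate subcase and no appeal to the sign of $A_1A_2$; your approach, by contrast, shows that in fact already $(\tilde x_1,\tilde x_2)$ diverges (a marginally stronger conclusion), at the cost of the extra case split. Both are valid.
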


\begin{proof}
By the previous lemma, we know that
$\tilde f$ is an immersion. So, we will
show that $\tilde f$ is proper.
Let
$\{(u_m,\theta_m)\}_{m=1,2,3,...}$
be a sequence  
on $\Omega_{\alpha_0,...,\alpha_{2n-1}}$
converging to a point  
$(u_{\infty},\theta_{\infty})$
on the boundary of
$\Omega_{\alpha_0,...,\alpha_{2n-1}}$.
Then it is sufficient to show that
$\tilde f(u_m,\theta_m)$
diverges.
In the case that
$\theta_{\infty}\ne \gamma_j$ ($j=0,1,\dots,2n-1$),
we can conclude that $\tilde f(u_m,\theta_m)$
diverges using the same argument as in the
proof of Proposition \ref{thm:proper},
where $\gamma_j$ is defined
in \eqref{eq:beta-interval}.
(The proof of Proposition \ref{thm:proper}
is given under the assumption that
$\alpha_0,...,\alpha_{2n-1}$ are distinct.)
So it is sufficient to consider the case that
$\theta_\infty=\gamma_j$ for some $j$.
Without loss of generality, we may assume that
$j=1$ and $\theta_\infty=\gamma_1$.
In this case, we have
$$
\pmt{\tilde x_0 \\ \tilde x_1 \\
\tilde x_2}=M
    \begin{pmatrix}
     X_1 \\
     X_2
    \end{pmatrix} + \mbox{(a bounded term)},
$$
where \red{$X_1$ and $X_2$ are}
given in 
\eqref{eq:Xl} and
$$
M:=   \begin{pmatrix}
 A_1 & A_2 \\
 A_1\cos(n-1)\alpha_1 & A_2\cos(n-1)\alpha_2 \\
 A_1\sin(n-1)\alpha_{1} & A_2\sin(n-1)\alpha_{2}
    \end{pmatrix}.
$$
This implies that $\tilde f(u_m,\theta_m)$
is unbounded as $m\to \infty$
\red{if} the rank of the matrix $M$
is $2$. In fact, since $A_1,A_2 \ne 0$ we have
$$
\op{rank}(M)=1+
\op{rank}
\pmt{
\cos(n-1)\alpha_2 - \cos(n-1)\alpha_1 \\
\sin(n-1)\alpha_2 - \sin(n-1)\alpha_1},
$$
and
$$
\pmt{
\cos(n-1)\alpha_2 - \cos(n-1)\alpha_1 \\
\sin(n-1)\alpha_2 - \sin(n-1)\alpha_1}
=
2\sin\left((n-1)\frac{\alpha_2-\alpha_1}{2}\right)
\pmt{-\sin (n-1) \gamma_1 \\ \cos (n-1) \gamma_1}
$$
never vanishes because 
$0<|\alpha_2-\alpha_1|<{2\pi}/(n-1)$.
So we get the assertion. 
\end{proof}

In the authors' numerical experiments,
the following question has naturally arisen:

\medskip
\noindent
{\bf Problem.}
{\it Let $f$ be a Kobayashi surface of
principal type associated to the 
Weierstrass data as
in \eqref{eq:maxfld2}.
Suppose that 
$(\alpha_0,...,\alpha_{2n-1})$
satisfies \eqref{condB}
but not \eqref{eq:condA}.
$($Here $\alpha_0,...,\alpha_{2n-1}$ are not necessarily 
distinct.$)$ 
\red{Then, can one find a suitable condition for
the analytic extension
$
\tilde f:\Omega_{\alpha_{0},....,\alpha_{2n-1}}
\to \R^3_1
$
to be properly embedded?}
}

\begin{figure}[h]
 \centering
 \includegraphics[width=0.5\textwidth]{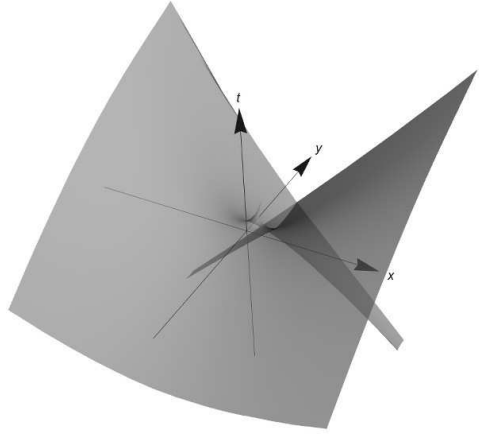}
 \caption{
\red{Kobayashi surface of principal type with angular data $(0,3\pi/4,3\pi/2,5\pi/3,7\pi/4,11\pi/6)$.}} 
\label{fig:newexample}
\end{figure}

\medskip
As a special case, the authors proved 
the proper embeddedness of
 the analytic extension of the Jorge-Meeks type
maximal surface $\mathcal J_n$
as a Kobayashi surface of principal type
of order $n$, in the previous work \cite{FKKRUY}.  
\red{However, if $n=3$, the Kobayashi surface 
of principal type with angular data
$(0,3\pi/4,3\pi/2,5\pi/3,7\pi/4,11\pi/6)$ 
has an immersed analytic extension having self-intersections
(cf. Figure \ref{fig:newexample}).
On the other hand,} for $n=2$, the condition
\eqref{condB} gives no restriction for
the angular data $(\alpha_0,\alpha_1,\alpha_2,\alpha_3)$.
We can prove the following,
which tells us that the problem is affirmative
when $n=2$:

\begin{theorem}
Let $f$ be a Kobayashi surface of
order $2$.
Then its analytic extension
$
\tilde f:\Omega_{\alpha_0,\alpha_1,\alpha_2,\alpha_3}
\to \R^3_1
$
$(\alpha_0=0)$ is properly embedded.
\end{theorem}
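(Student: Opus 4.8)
The plan is to handle an order-$2$ Kobayashi surface $f$ according to the number $N\le 4$ of its distinct ends, proving in each case that the analytic extension $\tilde f:\Omega\to\R^3_1$ (with $\Omega=\Omega_{\alpha_0,\dots,\alpha_3}$) is a \emph{proper injective immersion}; since a proper injective immersion into $\R^3_1$ is a proper embedding, this suffices. By Proposition~\ref{prop:4} an order-$2$ surface is of principal type, so after a M\"obius transformation of $S^2$ preserving $S^1$ and a rotation $z\mapsto e^{-\imag\alpha_0}z$ we may take $g=z$ and $\alpha_0=0$, with Weierstrass data as in \eqref{eq:maxfld2} (coincident $\alpha_j$ giving higher-order poles). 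By Lemma~\ref{lem:second}, $\tilde f$ is an immersion, since for $n=2$ condition \eqref{condB} reads $|\alpha_j-\alpha_{j+1}|\le 2\pi$ and is vacuous. Properness is Proposition~\ref{thm:proper2} when the $\alpha_j$ are distinct, and in the degenerate cases follows the same way: along a sequence approaching $\partial\Omega$ some $u-\cos(\theta-\alpha_j)\to 0$, so the corresponding term of $\tilde f$ — logarithmic for a simple end, a rational term with nonzero leading coefficient for a multiple one — dominates and diverges, while at the corners $\theta=\gamma_j$ one uses the rank argument of Proposition~\ref{thm:proper}. So it remains to prove $\tilde f$ is injective.

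The key is that for $N\ge 3$ the \emph{logarithmic} part of $\tilde f$ already determines the point. Writing the distinct ends as $e^{\imag\eta_1},\dots,e^{\imag\eta_N}$ with multiplicities $m_1,\dots,m_N$ ($\sum_s m_s=4$), partial fractions of $\phi_0,\phi_1,\phi_2$ and integration give
$$
\tilde f=\sum_{s=1}^{N}\tfrac12\mathbf b_s\,Y_s+\sum_{s:\,m_s\ge 2}\mathbf R_s+(\text{const.}),\qquad Y_s:=\log\bigl(u-\cos(\theta-\eta_s)\bigr),
$$
where $\mathbf b_s\in\R^3$ is the (real) residue vector of $(\phi_0,\phi_1,\phi_2)$ at $e^{\imag\eta_s}$ and $\mathbf R_s$ collects the rational terms from $e^{\imag\eta_s}$ (each, once $\tilde f$ is required to be well defined on $\Omega$, a constant vector times $\sin(\theta-\eta_s)/(u-\cos(\theta-\eta_s))$ or a power of it). The residue theorem on $S^2$ (no residue at $\infty$) gives $\sum_s\mathbf b_s=\mathbf 0$, so $\sum_s\tfrac12\mathbf b_sY_s=\sum_{s\ge 2}\tfrac12\mathbf b_s(Y_s-Y_1)$, and $\tilde f(p)=\tilde f(q)$ forces $\sum_{s=2}^{N}\tfrac12\mathbf b_s(\Delta Y_s-\Delta Y_1)+\sum_{s:\,m_s\ge 2}\Delta\mathbf R_s=\mathbf 0$, where $\Delta(\cdot)=(\cdot)(p)-(\cdot)(q)$. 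For $N=4$ there are no rational terms and $\mathbf b_2,\mathbf b_3,\mathbf b_4$, being nonzero multiples of the linearly independent vectors $(-1,\cos\eta_s,\sin\eta_s)$, give $\Delta Y_2=\Delta Y_3=\Delta Y_4=\Delta Y_1$. For $N=3$ there is one double end, say $\eta_1$ with $m_1=2$; after a cyclic rotation of the angular data making $\eta_1=0$ (harmless by Remark~\ref{eq:rotation}), the $1$-form $\phi_2=\imag(1-g^2)\omega$ has only a simple pole at $z=1$ (as $1-g^2$ vanishes there), so $\mathbf R_1=\mathbf c_1\,\frac{\sin\theta}{u-\cos\theta}$ with $\mathbf c_1$ a nonzero multiple of $(1,-1,0)$, and one checks that $\det(\mathbf b_2,\mathbf b_3,\mathbf c_1)$ is a nonzero multiple of $4\sin\frac{\eta_2}{2}\sin\frac{\eta_3}{2}\sin\frac{\eta_3-\eta_2}{2}\ne 0$; hence again $\Delta Y_2=\Delta Y_3=\Delta Y_1$.

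In both cases $\Delta Y_s=\log\frac{u_p-\cos(\theta_p-\eta_s)}{u_q-\cos(\theta_q-\eta_s)}$ takes one common value $\log\mu$ ($\mu>0$) over all $N\ge 3$ distinct angles $\eta_s$; equivalently $u_p-\mu u_q=\Re(e^{-\imag\eta_s}\zeta)$ with $\zeta:=e^{\imag\theta_p}-\mu e^{\imag\theta_q}$ is independent of $s$. Since $\eta\mapsto\Re(e^{-\imag\eta}\zeta)$, when $\zeta\ne 0$, attains each value at most twice on $[0,2\pi)$, three coincidences force $\zeta=0$; taking moduli gives $\mu=1$, hence $\theta_p\equiv\theta_q$ and $u_p=u_q$. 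So $\tilde f$ is injective on $\Omega$ (the excluded $p_\infty$ is the unique preimage of the origin, since $\tilde f(q)=\mathbf 0$ would again force $\cos(\theta_q-\eta_s)$ independent of $s$), and therefore properly embedded. Lastly, the cases $N\le 2$ reduce, by a motion of $\R^3_1$ (acting on $g$ by a M\"obius transformation preserving $S^1$) together with a homothety, to surfaces already known to be properly embedded: $N=1$ is the ruled surface of Example~\ref{exa:0000}, $N=2$ of type $(2,2)$ is $\mathcal J_2$, i.e.\ the graph \eqref{eq:J0}, and $N=2$ of type $(3,1)$ is the entire graph of Example~\ref{angle:third}.

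The main obstacle is the bookkeeping for the degenerate configurations: one has to verify that the rational contribution of a multiple end is of ``$\sin$-type'' — so that it drops out of, or at least does not obstruct, the relations above — and that the coefficient vectors $\mathbf b_s,\mathbf c_s$ sit in the required general position. Leaning on the explicit Examples for $N\le 2$ and running the level-set comparison only for $N=3$ and $N=4$ is what keeps this manageable.
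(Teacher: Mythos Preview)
Your argument is correct and takes a genuinely different route from the paper's.

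The paper proceeds by a uniform explicit reduction: in every multiplicity configuration it writes out the partial-fraction expansion of $\tilde f$, applies an invertible $3\times 3$ matrix, and is left with showing injectivity of the single map $\Psi(u,\theta)=\dfrac{(\cos\theta,\sin\theta)}{u-\cos\theta}$, which it checks via the projective map $(u,\theta)\mapsto [u-\cos\theta:\cos\theta:\sin\theta]\in P^2$. Properness is then read off case by case from the same explicit formulas.

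You instead split by the number $N$ of distinct ends. For $N\ge 3$ your ``cosine rigidity'' argument is cleaner than the paper's: once the residue vectors are linearly independent, $\tilde f(p)=\tilde f(q)$ forces $u_p-\cos(\theta_p-\eta_s)=\mu\bigl(u_q-\cos(\theta_q-\eta_s)\bigr)$ for at least three distinct $\eta_s$, and a nonconstant cosine attains each value at most twice, so $\zeta=0$ and $p=q$. This avoids the explicit matrix inversions entirely. For $N\le 2$ you exploit the $SO^+(2,1)$-action: since $g=z$ has degree one, a Lorentz motion composed with a coordinate change $w=M(z)$ returns the Weierstrass data to principal Kobayashi form with the poles moved by the M\"obius map $M$, so the three degenerate configurations normalize to the three known examples (Examples~\ref{exa:0000}, \ref{angle:third}, and $\mathcal J_2$). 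This is elegant, though you should state explicitly the step that makes the reduction conclusive for the \emph{map} rather than just the image: the image of $\tilde f_\alpha$ then sits inside a simply connected embedded surface $S$, and by Lemma~\ref{lem:second} together with your properness sketch $\tilde f_\alpha:\Omega_\alpha\to S$ is a proper local diffeomorphism, hence a diffeomorphism by \cite{H}. Your properness sketch in the degenerate cases is at the same level of detail as the paper's own (which also defers the $(0,0,0,0)$ case to an ``easy check''); at a corner adjacent to a multiple end the rational term dominates the logarithmic ones and has nonzero coefficient, while at a corner between two simple ends the two residue vectors $(-1,\cos\eta_s,\sin\eta_s)$ are independent, so either way $\tilde f$ diverges.

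What each approach buys: the paper's method is computational but self-contained and uniform across all five multiplicity types; yours is more conceptual for $N\ge 3$ and leverages the Lorentz symmetry for $N\le 2$, at the cost of importing the embeddedness of the three model examples from elsewhere in the paper and from \cite{FKKRUY}.
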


\begin{proof}
By Proposition \ref{prop:4}, $f$ is a Kobayashi
surface of principal type.
We may assume that its Weierstrass data is
written as in \eqref{neq2}.

\medskip
We first consider the case that
the angular data $\alpha_0,...,\alpha_3$ are distinct.
By Proposition \ref{thm:proper2}, $\tilde f$
is a proper immersion.
So it is sufficient to show that $\tilde f$
is an injection.
Using \eqref{eq:map}
and \eqref{eq:residue}, we can write
\begin{align*}
\pmt{
\tilde x_0\\
\tilde x_1\\
\tilde x_2
}
&=\pmt{
A_1(X_1-X_0)+A_2(X_2-X_0)+A_3(X_3-X_0)\\
-A_1(X_1-X_0)\cos \alpha_1
-A_2(X_2-X_0)\cos \alpha_2
-A_3(X_3-X_0)\cos \alpha_3\\
-A_1(X_1-X_0)\sin \alpha_1
-A_2(X_2-X_0)\sin \alpha_2
-A_3(X_3-X_0)\sin \alpha_3} \\
&=
R \pmt{Y_1\\ Y_2 \\ Y_3}
\qquad
(Y_j:=X_j-X_0,\,\, j=1,2,3),
\end{align*}
where
$$
X_j:= \log\biggl(u-
              \cos(\theta-\alpha_j)\biggr)
\qquad (j=0,1,2,3)
$$
and
$$
R:=\pmt{A_1 & A_2& A_3\\
-A_1 \cos \alpha_1& -A_2 \cos \alpha_2 
& -A_3 \cos \alpha_3 \\
-A_1 \sin \alpha_1& -A_2 \sin \alpha_2 
& -A_3 \sin \alpha_3 
}.
$$
Since the determinant of $R$ is equal to 
$$
4A_1A_2A_3\sin\frac{\alpha_2-\alpha_1}2
\sin\frac{\alpha_3-\alpha_2}2
\sin\frac{\alpha_3-\alpha_1}2(\ne 0),
$$
the matrix $R$ is regular.
Since
$$
Y_j=X_j-X_0=\log\frac{u-\cos(\theta-\alpha_j)}
{u-\cos \theta}
\qquad (j=1,2,3),
$$
it is sufficient to show that the map
$
\Phi:\Omega_{0,\alpha_1,\alpha_2,\alpha_3}
\to \R^3
$
given by
$$
\Phi (u, \theta):=
\frac1{u-\cos \theta}\biggl(
u-\cos(\theta-\alpha_1),
u-\cos(\theta-\alpha_2),
u-\cos(\theta-\alpha_3)
\biggr)
$$
is injective. It holds that
$$
\Phi (u, \theta)
=2\pmt{
\sin \frac{\alpha_1}2 & 0 & 0\\
0 &\sin \frac{\alpha_2}2 & 0 \\
0 & 0 & \sin \frac{\alpha_3}2 
}
\pmt{
\sin \frac{\alpha_1}2 & -\cos \frac{\alpha_1}2 \\
\sin \frac{\alpha_2}2 &-\cos \frac{\alpha_2}2 \\
\sin \frac{\alpha_3}2 &-\cos \frac{\alpha_3}2 }
\pmt{\xi \\ \eta
}+\pmt{1\\1\\1}
,
$$
where
\begin{equation}\label{eq:xy}
\xi:=
\frac{\cos \theta}{u-\cos \theta}, \qquad
\eta:=\frac{\sin \theta}{u-\cos \theta}. 
\end{equation}
Since
$$
\vmt{
\sin \frac{\alpha_1}2 & -\cos \frac{\alpha_1}2 \\
\sin \frac{\alpha_2}2 &-\cos \frac{\alpha_2}2 }
=
\sin\frac{\alpha_2-\alpha_1}2 \ne 0,
$$
the injectivity of $\Phi$ is
equivalent to the injectivity of the
map
\begin{equation}\label{eq:Psi}
\Psi:\Omega_{0,\alpha_1,\alpha_2,\alpha_3}
\ni 
(u,\theta)
\mapsto 
\frac1{u-\cos \theta}(\cos \theta,\sin \theta)
\in \R^2.
\end{equation}
This follows from the injectivity of the map
$$
\tilde\Psi:\Omega_{0,\alpha_1,\alpha_2,\alpha_3}
\ni 
(u,\theta)
\mapsto 
[u-\cos \theta:\cos \theta:\sin \theta]
\in P^2,
$$
where $P^2$ is the real projective plane.
Suppose that $\tilde\Psi(u_1,\theta_1)
=\tilde\Psi(u_2,\theta_2)$.
Since $u_j-\cos \theta_j>0$ for $(j=1,2)$,
we have
$$
[\cos \theta_1:\sin \theta_1]=
[\cos \theta_2:\sin \theta_2]
$$
and so
$
\theta_2-\theta_1\equiv 0 \mod \pi,
$
that is, $\cos \theta_2=\pm \cos \theta_1$.
However $\cos \theta_2\ne -\cos \theta_1$
because of the fact that
$\Omega_{0,\alpha_1,\alpha_2,\alpha_3}$
is contained in the upper half of 
the $(u,\theta)$-plane.
This implies the injectivity of $\tilde \Psi$.

\medskip
We next consider the case that
$$
\alpha_0=\alpha_1=0,\quad
\alpha_2=\alpha,\quad 
\alpha_3=\beta 
\qquad (0<\alpha<\beta<2\pi).
$$
We can write (cf. \eqref{eq:phi})
$$
\phi_0=\left[\frac{\imag B'}{(z-1)^2}+
A'_1\left(\frac1{z-e^{\imag \alpha}}
-\frac1{z-1}\right)+
A'_2\left(\frac1{z-e^{\imag \beta}}
-\frac1{z-1}\right)\right]dz.
$$
In particular, $A'_1$ and
$A'_2$ are residues of $\phi_0$ at $z=e^{\imag \alpha}$
and $z=e^{\imag \beta}$, respectively.
Then
$$
B':=\frac{1}{2\sin \frac{\alpha}2 \sin \frac{\beta}2},
\quad
A'_1:=\frac{1}{4\sin^2 \frac{\alpha}2 \sin \frac{\alpha-\beta}2},
\quad
A'_2:=\frac{1}{4\sin^2 \frac{\beta}2 \sin \frac{\beta-\alpha}2}
$$
hold.
Moreover, we have that
\begin{align*}
\phi_1&=\left[\frac{-\imag B'}{(z-1)^2}-A'_1\cos \alpha 
\left(\frac1{z-e^{\imag \alpha}}
-\frac1{z-1}\right)-A'_2\cos \beta
\left(\frac1{z-e^{\imag \beta}}
-\frac1{z-1}\right)\right]dz,\\
\phi_2&=\left[
-A'_1\sin \alpha 
\left(\frac1{z-e^{\imag \alpha}}
-\frac1{z-1}\right)-A'_2\sin \beta
\left(\frac1{z-e^{\imag \beta}}
-\frac1{z-1}\right)\right]dz.
\end{align*}
Integrating them, taking the real parts, and
replacing parameter $r$ by $u$, we have that
$$
\pmt{\tilde x_0\\
\tilde x_1\\
\tilde x_2}
=\frac12
\pmt{-B' & A'_1 & A'_2 \\
        B' & -A'_1\cos \alpha & -A'_2\cos \beta \\
        0 & -A'_1\sin \alpha & -A'_2\sin \beta 
}
\pmt{X'_0\\ X'_1\\ X'_2},
$$
where
$$
X'_0:=\frac{\sin \theta}{u-\cos \theta},\quad
X'_1:=\log \left(
\frac{u-\cos(\theta-\alpha)}{u-\cos \theta}
\right),\quad
X'_2:=
\log \left(\frac{u-\cos(\theta-\beta)}
{u-\cos \theta}\right).
$$
Since
$$
\vmt{-B' & A'_1 & A'_2 \\
        B' & -A'_1\cos \alpha & -A'_2\cos \beta \\
        0 & -A'_1\sin \alpha & -A'_2\sin \beta 
}
=B'A'_1A'_2 \sin\frac{\alpha}{2}\sin\frac{\beta}{2}
\sin\frac{\alpha-\beta}{2},
$$
it is sufficient to show that
the map $(u,\theta)\mapsto (X'_0,X'_1,X'_2)$
is a proper embedding.
The properness follows from
the fact that at least one of 
the three functions $X'_0,X'_1,X'_2$
diverges along the
sequence $\{(u_m,\theta_m)\}$ converging
to the boundary of $\Omega_{0,0,\alpha,\beta}$.
We see that
$$
\pmt{X'_0 \\ e^{X'_1}-1 \\ e^{X'_2}-1}
=
\pmt{0 & 1 \\
1-\cos \alpha & -\sin \alpha \\
1-\cos \beta & -\sin \beta}
\pmt{\xi \\ \eta}
,
$$
where $\xi,\eta$ are the functions
given in \eqref{eq:xy}.
Thus, the embeddedness reduces to 
the injectivity of the map $\Psi$
(cf. \eqref{eq:Psi}).

\medskip
Similarly, we consider the case that
$$
\alpha_0=\alpha_1=0,\quad
\alpha_2=\alpha_3=\alpha 
\qquad (0<\alpha<2\pi).
$$
Then
$$
\pmt{\tilde x_0\\
\tilde x_1\\
\tilde x_2}
=
\pmt{-A'' & -A'' & -A''B'' \\
        A''  &   A''\cos \alpha & A''B'' \\
         0  & B''/2  & A''
}
\pmt{X''_0\\ X''_1\\ X''_2},
$$
where
$$
A'':=\frac{1}{4\sin^2\frac{\alpha}{2}},
\qquad
B'':=\cot(\alpha/2),
$$
and
\begin{equation}\label{eq:Xp}
X''_0:=\frac{\sin \theta}{u-\cos \theta},\quad
X''_1:=\frac{\sin (\theta-\alpha)}{u-\cos(\theta-\alpha)},\quad
X''_2:=\log\left(
\frac{u-\cos(\theta-\alpha)}{u-\cos \theta}\right).
\end{equation}
Since
$$
\vmt{-A'' & -A'' & -A''B'' \\
        A''  &   A''\cos \alpha & A''B'' \\
         0  & B''/2  & A''
}
=-(A'')^3(1-\cos \alpha)\ne 0,
$$
and
$$
\pmt{X''_0\\ e^{X''_2}-1}
=
\pmt{0 & 1\\ 1-\cos \alpha & -\sin \alpha}
\pmt{\xi \\ \eta
},
$$
the embeddedness reduces to 
the injectivity of the map $\Psi$
(cf. \eqref{eq:Psi}).
The properness follows from the fact that
$\tilde x_2$ diverges.
In fact, 
either $X''_1$ or $X''_2$ diverges
along the sequence $\{(u_m,\theta_m)\}$ 
in $\Omega_{0,0,\alpha,\alpha}$ converging
to the boundary of $\Omega_{0,0,\alpha,\alpha}$.

\medskip
Finally, we consider the case 
that 
$(\alpha_0,\alpha_1,\alpha_2,\alpha_3)=(0,0,0,\alpha)$.
Then,
$$
\tilde x_0+\tilde x_1=\frac{1}{4\sin(\alpha/2)}X''_2,\quad
\tilde x_2=-\frac{\sin \alpha}{8\sin^3(\alpha/2)}X''_2
-\frac{1}{2\sin(\alpha/2)}X''_0
$$
hold,
where $X''_0$ and $X''_2$
are given in \eqref{eq:Xp}.
So, the assertion  reduces to 
the  injectivity of the map 
$(u,\theta)\mapsto (X''_0,X''_2)$
proved as above.
The properness follows from the fact that
either $X''_0$ or $X''_2$ diverges
along the sequence $\{(u_m,\theta_m)\}$ 
in $\Omega_{0,0,0,\alpha}$ converging
to the boundary of $\Omega_{0,0,0,\alpha}$.
In this case, we can prove that $\tilde x_2$ diverges.

The remaining case 
that 
$(\alpha_0,\alpha_1,\alpha_2,\alpha_3)=(0,0,0,0)$
has been discussed in Example \ref{exa:0000},
\red{and one can easily check that the map $f$ given
in \eqref{eq:f0000} is proper.}
So we get the assertion.
\end{proof}

\appendix
\section{Generalized (anti-)self-reciprocal polynomials}
Let
\begin{equation}\label{eq:A0}
p(r) = a_0 r^n + a_1r^{n-1} +\cdots+ a_{n-1}r + a_n
\end{equation}
be a polynomial in $r$ with complex coefficients.
Then, the polynomial whose coefficients are written 
in reverse order is given by
$$
\check p(r) = a_n r^n + a_{n-1}r^{n-1} +\cdots+ a_{1}r + a_0,
$$
which is called the {\it reciprocal polynomial} of $p(r)$.
If $p(r)$ satisfies
$$
\check p(r)=p(r),
\qquad (\mbox{resp. }\, \check p(r)=-p(r)),
$$
then it is called a {\it self-reciprocal polynomial}
(resp.~an  {\it anti-self-reciprocal polynomial}).
The following assertion can be proved easily.

\begin{lemma}\label{lem:A}
A given polynomial $p(r)$ of degree $n$
is self-reciprocal
$($resp. anti-self-reciprocal$)$
if and only if 
$p(1/r)=p(r)/r^{n}$
$($resp.  $p(1/r)=-p(r)/r^{n})$.
\end{lemma}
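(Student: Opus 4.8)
The plan is to isolate the single algebraic identity
\[
  r^n\,p(1/r)=\check p(r),
\]
valid for every polynomial $p$ written as in \eqref{eq:A0}, and then read off both equivalences as immediate consequences of it.

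First I would expand $p(r)=\sum_{k=0}^{n}a_k r^{\,n-k}$, so that by definition $\check p(r)=\sum_{k=0}^{n}a_k r^{\,k}$. Replacing $r$ by $1/r$ in $p$ and multiplying through by $r^n$ gives
\[
  r^n p(1/r)=r^n\sum_{k=0}^{n}a_k r^{-(n-k)}
            =\sum_{k=0}^{n}a_k r^{\,k}=\check p(r),
\]
hence $p(1/r)=\check p(r)/r^{\,n}$ as an identity of rational functions in $r$.

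From this the first assertion is immediate: $p$ is self-reciprocal, i.e.\ $\check p=p$, if and only if $p(1/r)=p(r)/r^{\,n}$; one direction is substitution into the identity, and the converse follows because two polynomials of degree $\le n$ that agree as rational functions agree as polynomials. The anti-self-reciprocal case is obtained verbatim by inserting a sign throughout: $\check p=-p$ if and only if $p(1/r)=-p(r)/r^{\,n}$. There is essentially no obstacle here; the only point worth stating with care is that $r^n p(1/r)=\check p(r)$ is a genuine polynomial identity (it holds for all $r\ne 0$, and after clearing the denominator both sides are polynomials), so that comparing it with $\pm p$ is legitimate as an equality of polynomials.
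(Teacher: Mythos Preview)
Your argument is correct and is precisely the straightforward verification the paper has in mind; the paper does not actually spell out a proof but only remarks that the assertion ``can be proved easily.'' Your identity $r^n p(1/r)=\check p(r)$ is the obvious content behind that remark, and the equivalences follow exactly as you say.
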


Regarding this fact, we give the following 
definition:

\begin{definition}
A given polynomial $p(r)$ is called
a {\it generalized  self-reciprocal polynomial}
(resp. {\it generalized anti-self-reciprocal polynomial})
if there exists a positive integer $m$ such that
$p(1/r)=p(r)/r^{m}$
(resp.  $p(1/r)=-p(r)/r^{m}$).
The number $m$ is called the {\it reciprocal order}
of the polynomial.
\end{definition}

One can easily observe that for each generalized
self-reciprocal polynomial (resp.~generalized 
anti-self-reciprocal polynomial) $Q(r)$,
there exist a self-reciprocal polynomial
(resp. anti-self-reciprocal polynomial) $p(r)$
and a non-negative integer $l$ such that $Q(r)=r^l p(r)$.

A {\it Chebyshev polynomial} $T_n(r)$ 
(resp. $U_{n-1}(r)$) of the {\it first kind}
(resp. the {\it second kind})
is a polynomial of degree $n$ (resp. $n-1$) 
satisfying the following identity
\begin{equation}\label{eq:id}
\frac{r^n+r^{-n}}2=
T_n\left(\frac{r+r^{-1}}2\right)
\quad 
\left(\mbox{resp. } \frac{r^n-r^{-n}}2
=\left(\frac{r-r^{-1}}2\right) U_{n-1}
\left(\frac{r+r^{-1}}2\right) \right).
\end{equation}
The polynomial
$$
p_+(r)=r^n \cdot \frac{r^n+r^{-n}}2\qquad
\left(\mbox{resp. }\,\, 
p_-(r)=r^n \cdot\frac{r^n-r^{-n}}2\right)
$$
is a typical example of a self-reciprocal polynomial
$($resp. anti-self-reciprocal polynomial$)$, 
and the identity \eqref{eq:id}
is the special case of the following assertion: 

\begin{proposition}\label{prop:A}
Let $p(r)$ be a generalized self-reciprocal polynomial
$($resp. generalized anti-self-reciprocal polynomial$)$ 
of reciprocal order $2m$. Then there exists a polynomial $q(u)$ 
such that
$$
p(r)=r^m q\left(
\frac{r+r^{-1}}2
\right),\qquad 
(\mbox{resp. }\, 
p(r)=r^m \left(
\frac{r-r^{-1}}2\right)
q\left(
\frac{r+r^{-1}}2
\right)).
$$
\end{proposition}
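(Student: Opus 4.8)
The plan is to reduce the statement to the single elementary fact that, for every integer $i\ge 0$, both $r^{i}+r^{-i}$ and $(r^{i}-r^{-i})/(r-r^{-1})$ are polynomials in $u=(r+r^{-1})/2$; this is exactly the content of the Chebyshev identities \eqref{eq:id}, and it also follows directly from the three-term recurrence $r^{i+1}+r^{-(i+1)}=2u\,(r^{i}+r^{-i})-(r^{i-1}+r^{-(i-1)})$ and its analogue for the second kind. (When $p\equiv 0$ the proposition is trivial with $q\equiv 0$, so assume $p\neq 0$.)

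First I would treat the self-reciprocal case. By the observation recorded just after the definition of generalized self-reciprocal polynomial, write $p(r)=r^{l}\,\tilde p(r)$ with $\tilde p$ a genuine self-reciprocal polynomial and $l\ge 0$; here $\tilde p(0)\neq 0$ automatically, since a self-reciprocal polynomial has its constant term equal to its (nonzero) leading coefficient (cf.\ Lemma \ref{lem:A}). Now a short bookkeeping step: if $\deg\tilde p=d$, then inserting $p=r^{l}\tilde p$ together with $\tilde p(1/r)=\tilde p(r)/r^{d}$ into the defining relation $p(1/r)=p(r)/r^{2m}$ forces $l+d=2m-l$, so $d=2(m-l)$ is even, say $d=2k$, and $l+k=m$. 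Writing $\tilde p(r)=\sum_{j=0}^{2k}a_{j}r^{j}$ with $a_{j}=a_{2k-j}$, one computes
\[
   r^{-k}\,\tilde p(r)=a_{k}+\sum_{i=1}^{k}a_{k+i}\,(r^{i}+r^{-i})=:q(u),
\]
which is a polynomial in $u$ by the fact above. Then $p(r)=r^{l}\tilde p(r)=r^{l+k}q(u)=r^{m}q(u)$, as claimed.

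Next I would run the identical argument in the anti-self-reciprocal case. Again write $p(r)=r^{l}\tilde p(r)$ with $\tilde p$ anti-self-reciprocal; the same bookkeeping gives $\deg\tilde p=2k$ even and $l+k=m$, and in addition the middle coefficient vanishes, $a_{k}=0$ (because $a_{k}=-a_{k}$). Hence, writing $\tilde p(r)=\sum_{j=0}^{2k}a_{j}r^{j}$ with $a_{j}=-a_{2k-j}$,
\[
   r^{-k}\,\tilde p(r)=\sum_{i=1}^{k}a_{k+i}\,(r^{i}-r^{-i})=(r-r^{-1})\sum_{i=1}^{k}a_{k+i}\,U_{i-1}(u),
\]
using $r^{i}-r^{-i}=(r-r^{-1})\,U_{i-1}(u)$ from \eqref{eq:id}. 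Therefore $p(r)=r^{l+k}(r-r^{-1})\,\big(\sum_{i=1}^{k}a_{k+i}U_{i-1}(u)\big)=r^{m}\big((r-r^{-1})/2\big)\,q(u)$, after absorbing the factor $2$ into $q$.

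The main point requiring care—the closest thing to an obstacle—is precisely the reduction/bookkeeping step: checking that the evenness of the reciprocal order $2m$ forces the genuine (anti-)self-reciprocal factor $\tilde p$ to have even degree, and that the exponents combine exactly as $l+k=m$. Everything after that is the elementary Chebyshev recursion, essentially already packaged in \eqref{eq:id}.
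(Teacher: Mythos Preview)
Your argument is correct. Both proofs rest on the same Chebyshev identities \eqref{eq:id}, but the paper's version is slightly more streamlined: rather than first factoring $p(r)=r^{l}\tilde p(r)$ and tracking the exponents $l+k=m$, the paper applies the averaging identity
\[
   r^{-m}p(r)=\frac{r^{-m}p(r)+r^{m}p(1/r)}{2}
   =\sum_{j=0}^{n}a_{j}\,\frac{r^{\,n-j-m}+r^{\,m+j-n}}{2}
   =\sum_{j=0}^{n}a_{j}\,T_{|n-j-m|}(u)
\]
directly to the original polynomial, which absorbs the bookkeeping in one line (the anti-self-reciprocal case is handled analogously with $U_{k}$). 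Your route, by contrast, makes the structure more explicit: the reduction to a genuine (anti-)self-reciprocal factor $\tilde p$ isolates exactly why the degree must be even and why the exponents line up to give $r^{m}$, and it makes the vanishing of the middle coefficient in the anti case transparent. The two arguments are equivalent in content; the paper's is a touch shorter, yours is a touch more structural.
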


\begin{proof}
Let $p(r)$ be a polynomial as in \eqref{eq:A0}.
In particular, $p(r)$ is a polynomial in degree $n$.
Using the property of the Chebyshev polynomial $T_n(r)$
of the first kind,  we have that
\begin{align*}
r^{-m}p(r)
&=
\frac{r^{-m}p(r)+r^{m}p(1/r)}2\\
&=\sum_{j=0}^{n}a_j\frac{r^{n-j-m}+r^{m+j-n}}2 
=\sum_{j=0}^{n}a_j
T_{|n-j-m|}\left(\frac{r+r^{-1}}2\right).
\end{align*}
So the polynomial
$
q(u):=\sum_{j=0}^{n}a_jT_{|n-j-m|}(u)
$
attains the desired property.
The case that $p(r)$ is
a generalized anti-self-reciprocal polynomial of 
reciprocal order $2m$
can also proved in the same manner using the
properties of the Chebyshev polynomial 
of the second kind.

\end{proof}


\begin{thebibliography}{20}
\bibitem{A}
S.~Akamine,
{\it Causal characters of  zero mean curvature surfaces
of Riemann-type in the Lorentz-Minkowski 3-space},
preprint.

\bibitem{C}
 E. Calabi, 
 {\itshape
 Examples of Bernstein problems for some nonlinear equations}, 
 Proc. Symp. Pure Math., {\bf 15} (1970), 223--230. 


\bibitem{ER}
 F. J. M. Estudillo and A. Romero,
 {\itshape Generalized maximal surfaces in Lorentz-
	Minkowski space $L^3$}, 
 Math. Proc. Camb. Phil. Soc., {\bf 111} (1992), 515--524.




\bibitem{FKKRSUYY2}
S. Fujimori, Y. W. Kim, S.-E. Koh, W. Rossman, H. Shin, M. Umehara,
K. Yamada and S.-D. Yang
{\it Zero mean curvature surfaces 
       in Lorentz-Minkowski $3$-space and 2-dimensional
       fluid mechanics}, 
Math. J. Okayama Univ. {\bf 57} (2015), 173--200.

\bibitem{FKKRUY}
S.~Fujimori,
Y. Kawakami,
M. Kokubu,
W.~Rossman,
M.~Umehara and
K.~Yamada,
  {\it  Analytic extension of Jorge-Meeks type maximal surfaces 
   in Lorentz-Minkowski 3-space},
preprint,
arXiv:1509.05853.

\bibitem{FRUYY1}
 S. Fujimori, W. Rossman, M. Umehara, K. Yamada and S.-D. Yang,
 {\itshape New maximal surfaces in Minkowski 3-space with arbitrary genus and 
their cousins in de Sitter 3-space}, 
Result. Math. {\bf 56} (2009), 41--82.

\bibitem{FRUYY2}
S. Fujimori, W. Rossman, 
M. Umehara, K. Yamada and  S.-D. Yang,
{\it 
Embedded triply periodic zero mean curvature 
surfaces of mixed type in Lorentz-Minkowski 3-space}, 
Michigan Math. J. {\bf 63} (2014), 189--207. 


\bibitem{FSUY}
  S. Fujimori, K. Saji, M. Umehara and K. Yamada,
  {\itshape Singularities of maximal surfaces},
  Math. Z., {\bf 259} (2008), 827--848.


\bibitem{H}
C.-W. Ho,
{\it A note on proper maps},
Proc. Amer. Math. Soc. {\bf 51} (1975). 237--241.

\bibitem{HKKUY}
A.~Honda,
M.~Koiso,
M.~Kokubu,
M.~Umehara, and
K.~Yamada,
{\it Mixed type surfaces 
with  bounded mean curvature
in $3$-dimensional space-times}, 
preprint (arXiv:1508.02514).


\bibitem{Kar}
H. Karcher,  {\it Embedded minimal surfaces derived from Scherk's 
examples}, Manuscripta Math.
{\bf 62} (1988), 83--114.







\bibitem{K}
 O. Kobayashi,
    {\itshape Maximal surfaces in the $3$-dimensional
        Minkowski space $\mathbb{L}^3$},
    Tokyo J. Math., {\bfseries 6} (1983), 297--309.
%


\bibitem{PT}
J.~Perez and M.~Traizet,
{\it The classification of singly periodic minimal surfaces
with genus zero and  scherk-type ends},
Tans. Amer. Math. Soc.
{\bf 359} (2007), 965-990. 

\bibitem{ST}
 V. Sergienko and V.G. Tkachev, 
{\it Doubly periodic maximal surfaces with singularities}, 
Proceedings on Analysis and Geometry (Russian) (Novosibirsk Akademgorodok,
1999), 571--584, Izdat. Ross. Akad. Nauk Sib. Otd. Inst. Mat., 
Novosibirsk, 2000.
%
\bibitem{UY}
 M. Umehara and K. Yamada,
 {\itshape
     Maximal surfaces with singularities in
     Minkowski space},
      Hokkaido Math. J., {\bfseries 35} (2006), 13--40.

\bibitem{UY2}
 M. Umehara and K. Yamada,
 {\itshape
      Applications of a 
completeness lemma in minimal surface theory 
to various classes of surfaces},
     Bull. London Math. Soc. {\bf 43} (2011), 191--199.

\bibitem{ZG}
W.~Zhang and
S.~ S~Ge,
{\it A global Implicit Function Theorem without initial
point and its applications to control of non-affine
systems of high dimensions},
J. Math. Anal. Appl. {\bf 313} (2006), 251--261.
\end{thebibliography}
\end{document}